\theoremstyle{plain}
\newtheorem*{theorem*}{Theorem}
\newtheorem{theorem}{Theorem}[section]
\newtheorem{lemma}[theorem]{Lemma}
\newtheorem{corollary}[theorem]{Corollary}
\newtheorem{proposition}[theorem]{Proposition}
\theoremstyle{definition}
\newtheorem{definition}[theorem]{Definition}
\newtheorem{remark}[theorem]{Remark}
\newtheorem{example}[theorem]{Example}
\newcommand{\C}{\mathbb{C}}
\newcommand{\mO}{\mathbb{O}}
\newcommand{\la}{\lambda}
\newcommand{\ttx}{\mathtt{x}}
\newcommand{\cB}{\mathcal{B}}
\newcommand{\cC}{\mathcal{C}}
\newcommand{\cF}{\mathcal{F}}
\newcommand{\cN}{\mathcal{N}}
\newcommand{\cP}{\mathcal{P}}
\newcommand{\cQ}{\mathcal{Q}}
\newcommand{\cS}{\mathcal{S}}
\newcommand{\cT}{\mathcal{T}}
\newcommand{\cZ}{\mathcal{Z}}
\DeclareMathOperator{\End}{End}
\DeclareMathOperator{\im}{im}
\DeclareMathOperator{\Type}{Type}
\DeclareMathOperator{\eType}{eType}
\DeclareMathOperator{\Sp}{Sp}
\DeclareMathOperator{\RS}{eRS}
\DeclareMathOperator{\Irr}{Irr}
\DeclareMathOperator{\Std}{Std}
\newcommand{\gl}{\mathfrak{gl}}
\newcommand{\mf}{\mathfrak}
\newcommand{\fN}{\mathfrak{N}}
\newcommand{\spp}{\mathfrak{sp}}
\newcommand{\y}{10}
\newcommand{\ya}{11}
\newcommand{\yd}{14}
\newcommand{\ye}{15}
\newcommand{\yf}{16}
\newcommand{\yg}{17}
\newcommand{\yh}{18}
\begin{document}
\title[Irreducible components of exotic Springer fibres II: RS algorithm]{Irreducible components of exotic Springer fibres II: Robinson-Schensted algorithms}

\author{Vinoth Nandakumar}
\address{V.~Nandakumar: School of Mathematics and Statistics, University of Sydney}
\email{vinoth.nandakumar@sydney.edu.au}

\author{Daniele Rosso}
\address{D.~Rosso: Department of Mathematics and Actuarial Science, Indiana University Northwest}
\email{drosso@iu.edu}

\author{Neil Saunders}
\address{N.~Saunders: Department of Mathematical Sciences, Old Royal Naval College,  University of Greenwich}
\email{n.saunders@greenwich.ac.uk}

\maketitle

\begin{abstract}
Kato's exotic nilpotent cone was introduced as a substitute for the ordinary nilpotent cone of type C with cleaner properties. The geometric Robinson-Schensted correspondence is obtained by parametrizing the irreducible components of the Steinberg variety (the conormal variety for the action of a semisimple group on two copies of its flag variety); in type A the bijection coincides with the classical Robinson-Schensted algorithm for the symmetric group. Here we give a combinatorial description of the bijection obtained by using the exotic nilpotent cone instead of ordinary type C nilpotent cone in the geometric Robinson-Schensted correspondence; we refer this as the "exotic Robinson-Schensted bijection". This is interesting from a combinatorial perspective, and not a naive extension of the type A Robinson-Schensted bijection. \end{abstract}
\tableofcontents

\section{Introduction}
The classical Robinson-Schensted correspondence is an algorithmic bijection 
$$
S_n \overset{\sim}\longleftrightarrow \bigsqcup_{\lambda \in \cP_n} \Std(\lambda) \times \Std(\lambda),
$$
where $S_n$ is the symmetric group of degree $n$, $\cP_n$ denotes the set of partitions of $n$ and $\Std(\lambda)$ denotes {\it standard Young tableaux} of shape $\lambda$. This bijection has many rich combinatorial features and many applications in representation theory: for instance, the resulting subsets $S_{\lambda}$ indexed by $\cP_n$ recover the two-sided cells as defined by Kazhdan and Lusztig in \cite[Section 5]{KL} and so index the families of unipotent characters of $GL_{n}(q)$ \cite[Section 18]{L4}. \vspace{5pt}

In \cite{SteinRSK}, Steinberg gives a geometric construction of the Robinson-Schensted correspondence using Springer theory. Using Spaltenstein's theorem, \cite{Spa}, stating that the irreducible components of Type A Springer fibres and in bijection with standard Young tableaux, Steinberg parametrises the irreducible components of the following variety $\cZ$ (the so-called Steinberg variety) in two different ways.
$$\cZ:=\{(x,V_{\bullet},U_{\bullet}) \, | \, x \in \cN, \, V_{\bullet},U_{\bullet} \in \cF_{x}\}$$
Above $\cN$ denotes the nilpotent cone of $\mf{gl}_{n}$ and $\cF_{x}$ denotes the Springer fibre above $x \in \cN$. By matching up these two different descriptions, in \cite{Ste} Steinberg showed that the resulting bijection coincides with the Robinson-Schensted correspondence. \vspace{5pt}

While the bijection in \cite{SteinRSK} is in fact defined for an arbitrary semisimple group, in types B, C, and D, the resulting algorithm is more complicated than the Robinson-Schensted bijections corresponding to the Weyl groups in those types; van Leeuwen, \cite{leeuwen}, describes it, building on earlier work of Spaltenstein (Section II.6 of \cite{irredspa}) describing irreducible components of Springer fibers in those types. In this paper, we examine the `exotic Robinson-Schensted bijection' - the analogous algorithm obtained using the geometry of Kato's exotic nilpotent cone as a substitute for the ordinary nilpotent cone of type $C$. The resulting combinatorial algorithm is more tractable, but again is different from the Robinson-Schensted algorithm arising from the Weyl group of type C. This builds on our previous paper, \cite{NRS16}, parametrizing irreducible components of exotic Springer fibers. Note that this is different to the exotic Robinson-Schensted correspondence constructed by Henderson and Trapa in \cite{HT}.
\vspace{5pt}

Let $\cN(\mf{gl}_{2n})$ be the nilpotent cone for $GL_{2n}$ and let $\cN(\cS)=\cN(\mf{gl}_{2n}) \cap \cS$ be the Hilbert nullcone of the $Sp_{2n}$ representation $\C^{2n} \oplus \cS$, where $\C^{2n}$ is the natural representation and $\cS$ is the $Sp_{2n}$-invariant complement of $\mf{sp}_{2n}$ in $\mf{gl}_{2n}$; Kato's exotic nilpotent cone for $Sp_{2n}$ is the variety $\mf{N}=\C^{2n} \times \cS$. In \cite{Kat}, Kato constructs an {\it exotic} Springer correspondence, and showed that the $Sp_{2n}$-orbits on $\mf{N}$ are in bijection with the bipartitions of $n$. In subsequent work, many other Springer theoretic results have been extended to the exotic setting - intersection cohomology of orbit closures, (see Achar and Henderson, \cite{AH}, and Shoji-Sorlin, \cite{ss}), theory of special pieces (see Achar-Henderson-Sommers, \cite{special}), and the Lusztig-Vogan bijection (see \cite{vn}). In many respects, the exotic nilpotent cone behaves more nicely than the ordinary nilpotent cone of type C, and the present paper is another illustration of this. \vspace{5pt}

Let $\pi: \widetilde{\mf{N}} \longrightarrow \mf{N}$ be the exotic Springer resolution as defined in \cite{Kato5}. In \cite{NRS16}, we showed that the irreducible components of the fibres of $\pi$ are in bijection with {\it standard Young bitableaux}, using Spaltenstein's techniques from \cite{Spa}. This allowed us to define an analogous {\it exotic Steinberg variety} whose irreducible components are parametrised in two separate ways: one way by elements of the Weyl group $W(C_n)=C_2 \wr S_n$, and the other by pairs of standard Young bitableaux. Hence, by matching up these two descriptions, this gives us a bijection: 
\begin{equation} \label{equation:mainbijection}
W(C_n) \overset{\sim}\longleftrightarrow \bigsqcup_{(\mu,\nu) \in \cQ_n} \cT(\mu,\nu) \times \cT(\mu,\nu),
\end{equation}
between $W(C_n)$ and pairs of standard Young bitableaux $ \cT(\mu,\nu)$ as we run through all bipartitions $\cQ_n$ of $n$. In this paper, we will give an explicit combinatorial description of this bijection.

The organisation of the paper is as follows: 
\begin{itemize}
\item In Section \ref{section:notation} we recall facts about the exotic nilpotent cone that we will need.
\item In Section \ref{section:algorithm}, which is mostly independent of the previous section, we define the exotic Robinson-Schensted bijection, a bijection between the Weyl group $W(C_n)$ and pairs of standard Young bitableaux. We provide the insertion and reverse bumping algorithms; these are interesting new algorithms and not a naive extension of the usual RS correspondence.
\item In Section \ref{section:generichyperplanes} we examine the exotic Springer fibres, understanding the restriction of exotic Jordan types at pairs of generic points of the fibre.  
\item In Section \ref{section:algorithmconstruction} we use the results from Section \ref{section:generichyperplanes} and \cite{NRS16} to construct the reverse bumping algorithm from the geometry of exotic Springer fibres and thus prove the main theorem. 


\end{itemize}

\subsection{Acknowledgements}
We are deeply grateful to Anthony Henderson for many useful discussions. We would also like to thank George Lusztig, Arun Ram, Nathan Williams and Igor Pak for useful conversations about the combinatorics of the Robinson-Schensted Correspondence. The third author is indebted to Donna Testerman and the \'Ecole Polytechinque F\'ed\'erale de Lausanne for their support where much of this work was carried out, as well as the Heilbronn Institute for Mathematical Research and City, University of London. 

\section{Background} \label{section:notation}
\subsection{Partitions} We recall some standard combinatorial definitions which we will need. We closely follow the notation of \cite[Section 2]{NRS16} in this exposition. 

\begin{definition}[Partitions and Bipartitions]
Let $n$ be a non-negative integer. A {\it partition} of $n$ is a sequence of positive integers $\lambda=(\lambda_1, \ldots ,\lambda_k)$ such that $\lambda_1 \geq \ldots \geq \lambda_k$ and $\sum_{i=1}^{k}\lambda_i=n$. We write $\lambda \vdash n$ or $|\lambda|=n$ to denote that $\lambda$ is a partition $n$ and write $\ell(\lambda)=k$ to say that $\lambda$ has $k$ parts, or length $k$. \vspace{5pt}

A {\it bipartition} of $n$ is a pair of partitions $(\mu,\nu)$ such that both $\mu$ and $\nu$ are partitions and that $|\mu|+|\nu|=n$. We let $\cQ_n$ denote the set of bipartitions of $n$. Given a bipartition $(\mu,\nu)$ of $n$, we let $\lambda:=\mu+ \nu = (\mu_1 + \nu_1, \mu_2+\nu_2, \ldots )$ denote the corresponding partition of $n$ whose $i$-th part is the sum of the $i$-th parts of $\mu$ and $\nu$ respectively. \vspace{5pt}

Associated to a partition $\lambda$, we have a Young diagram consisting of $\lambda_i$ boxes on row $i$. We say that the Young diagram has shape $\lambda$. Similarly, we have a pair of Young diagrams associated to each bipartition. 
\end{definition}

\begin{definition}\label{definition:gammadeltasets}

Fix a bipartition $(\mu,\nu) \in \cQ_n$ and let $\lambda=\mu+\nu$ be the corresponding partition of $n$. Fix a positive integer $m \leq \ell(\lambda)$. 
We define the following sets: 
\begin{eqnarray*}
\Lambda_{m}	&=& \{1 \leq i \leq \ell(\lambda) \, | \, \lambda_i=\lambda_m\}, \\
\Gamma_{m}	&=& 	\{ 1 \leq i \leq \ell(\lambda) \, | \,  \mu_i=\mu_m\}, \\
\Delta_{m}		&=& \{1 \leq i \leq \ell(\lambda)\, | \,  \nu_i=\nu_m\}.
\end{eqnarray*}
Moreover, define $$\Delta_{\leq m}=\{ i \in \Delta_m \, | \, i \leq m \, \} \quad \text{and} \quad \Delta_{< m}=\{ i \in \Delta_m \, | \, i < m \, \} ,$$ with similar definitions for $\Gamma_m$ and $\Lambda_m$.  
\end{definition}

\begin{definition}[Standard Young Bitableaux]
Given a Young diagram of shape $\lambda \in \cP_n$, we obtain a standard Young tableau by filling in the boxes with the integers $1$ up to $n$ in such a way that the numbers are increasing along rows and down columns. Similarly a bitableau of shape $(\mu, \nu) \in \cQ_n$ is standard if every integer between $1$ to $n$ occurs exactly once, and the increasing condition is satisfied in each of the two tableaux. To match the conventions of \cite{NRS16} and \cite{AH}, we reverse the direction of the rows of the first tableau, so numbers are increasing along rows from right-to-left. We let $\cT(\mu,\nu)$ denote the set of standard Young bitableaux of shape $(\mu,\nu)$.
\end{definition}

\begin{example} \label{example:bitableau1}
An example of a standard Young bitableau of shape $((3,1), (2,2,1))$ is the following: 
$$T= \left( \young(631,::2),\young(47,58,9)\right).$$ However, for notational, convenience especially for when describing the algorithm, we will often draw this diagram as follows: 
\begin{center}
\begin{tikzpicture}[scale=0.5]
\draw[-,line width=2pt] (0,3.5) to (0,0.5);

\node[left] at (0,3) {$1$};
\node[left] at (-2,3) {$6$};
\node[left] at (-1,3) {$3$};
\node[left] at (0,2) {$2$};

\node[right] at (0,3) {$4$};
\node[right] at (1,3) {$7$};
\node[right] at (0,2) {$5$};
\node[right] at (1,2) {$8$};
\node[right] at (0,1) {$9$};
\node[left] at (-3,2) {$T=$};

\end{tikzpicture}
\end{center}
\end{example}
The positions of the numbers in the bitableau $T$ are described via the partition $\lambda$ of $n$; that is, a number $s$ is described as being in the $(i,j)$-th position of $T$ where $1 \leq i \leq \ell(\lambda)$ and $1 \leq j \leq \lambda_i$. Thus, for $T$ as above, $3$ is in the $(1,2)$-th position, $5$ is in the $(2,2)$-th position, and $9$ is in the $(3,1)$-th position.
\begin{definition}\label{def:truncT} For $T$ a standard Young bitableau and $1 \leq s \leq n$, define $T_s$ to be the truncated bitableau consisting of just the numbers $1$ up to $s$. By definition this remains a standard Young bitableau. If $T$ originally had shape $(\mu,\nu) \in \cQ_n$ then  $T_s$ has shape $(\mu^{(s)},\nu^{(s)}) \in \cQ_s$.  For a truncated tableau $T_s$ of shape $(\mu^{(s)},\nu^{(s)})$, define the sets $ \Lambda_{m}^{(s)}, \Gamma_{m}^{(s)}$ and $\Delta_{m}^{(s)}$ in a completely analogous way to Definition \ref{definition:gammadeltasets}.
\end{definition} 

\begin{example} For $T$ as in Example \ref{example:bitableau1}, we have $$T_5=  \left( \young(31,:2),\young(4,5)\right),$$ which has shape $((2,1),(1,1)) \in \cQ_5$.  \end{example}

\subsection{The exotic nilpotent cone}
Let $V \cong \mathbb{C}^{2n}$ be a vector space endowed with a symplectic form $\langle \cdot , \cdot \rangle$. Denote by $\Sp_{2n}(\mathbb{C})$ the corresponding symplectic group and $\mathfrak{sp}_{2n}(\mathbb{C})$ its Lie algebra. 

\begin{definition}We define $W(C_n)$ to be the Weyl group of Type $C_n$, which is isomorphic to the group of signed permutations. For $w \in W(C_n)$, we write $w=w_1 \ldots w_n$ where $w_j=a_j$ or $w_j=\bar{a}_j$, with $a_j\in\{1,\ldots,n\}$, for $j=1,\ldots,n$.
\end{definition}


\begin{definition}\label{def:embedding}
The natural embedding of $\Sp_{2n}$ in $\operatorname{GL}_{2n}=\operatorname{Aut}(V)$ induces an embedding of Weyl groups $ \iota:W(C_n) \hookrightarrow S_{2n}$ as follows: 
$$ \iota: w_1w_2\cdots w_n\mapsto \sigma_1\sigma_2\cdots\sigma_n\sigma_{n+1}\cdots \sigma_{2n}$$
where for $i=1,\ldots, n$,
$$ 
\sigma_i=\begin{cases}
n+1- a_{n+1-i} & \text{ if }w_i=a_i \\ 
n+a_i & \text{ if }w_i=\bar{a}_i 
\end{cases}\quad \text{and} \quad \sigma_{2n+1-i}=2n+1-\sigma_{i}.$$
The image of $W(C_n)$ under this embedding is $$\{ \sigma \in S_{2n} \, | \, \sigma(i)+\sigma(2n-i+1)=2n+1, \, \text{for all} \, i \}.$$ 
\end{definition}

\begin{definition} Define $\cS$ as follows, noting that 
$\gl_{2n}=\spp_{2n}\oplus \cS$ as $\Sp_{2n}(\mathbb{C})$-modules. $$ \cS=\{ x\in \End(V)~ | ~\langle xv,w\rangle -\langle v,xw\rangle =0, ~\forall v,w\in V \} $$

The \emph{exotic nilpotent cone} is the following singular variety $\mathfrak{N}= \C^{2n} \times \cN(\cS)$. It carries a natural $\Sp_{2n}(\mathbb{C})$-action: $$g\cdot (v, x)= (gv,gxg^{-1}),$$ for $g \in Sp_{2n}(\C)$ and $(v,x) \in \mathfrak{N}$.
\end{definition}

\begin{theorem}[{\cite[Thm 6.1]{AH}}] The orbits of $\Sp_{2n}(\mathbb{C})$ on $\fN$ are in bijection with $\cQ_n$. 
More precisely, given a bipartition $(\mu,\nu)\in\cQ_n$, the corresponding orbit $\mO_{(\mu,\nu)}$ contains the point $(v,x)$ if and only if there is a `normal' basis of $V$ given by $$\{v_{ij}, v_{ij}^{*} \, | \, 1 \leq i \leq \ell(\mu+\nu), 1 \leq j \leq \mu_i + \nu_i \},$$ with $\langle v_{ij},v^*_{i'j'}\rangle=\delta_{i,i'}\delta_{j,j'}$, $v=\sum_{i=1}^{\ell(\mu)}v_{i,\mu_i}$
and such that the action of $x$ on this basis is as follows: 
$$xv_{ij}=\begin{cases} v_{i,j-1}  & \mbox{if} \quad j \geq 2 \\ 0  &\mbox{if} \quad j=1 \end{cases} \quad \quad \quad xv_{ij}^{*}=\begin{cases} v_{i,j+1}^{*}  & \mbox{if} \quad j \leq \mu_i+\nu_i-1 \\ 0  &\mbox{if} \quad  j=\mu_i+\nu_i \end{cases}$$ in particular the Jordan type of $x$ is $(\mu+\nu)\cup(\mu+\nu)$. \end{theorem}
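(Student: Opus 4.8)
The plan is to classify the $\Sp_{2n}(\C)$-orbits on $\fN=\C^{2n}\times\cN(\cS)$ in two stages: first the $\cS$-component $x$ by itself, then the vector $v$ relative to the stabiliser of $x$. The guiding principle is that $\cS$ consists of the endomorphisms of $V$ that are \emph{self-adjoint} for the symplectic form $\langle\cdot,\cdot\rangle$ (whereas $\spp_{2n}$ consists of the skew-adjoint ones), and that self-adjointness rigidly constrains the Jordan structure.

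\textbf{Stage 1: normal form for $x$.} Let $x\in\cN(\cS)$. Since $x$ is nilpotent and self-adjoint for the nondegenerate form $\langle\cdot,\cdot\rangle$, the classical structure theory of self-adjoint nilpotents decomposes $V$ into an orthogonal direct sum of $x$-stable indecomposable summands, each either a single Jordan string or a hyperbolic pair of equal-length strings. The key observation is that, because $\langle\cdot,\cdot\rangle$ is \emph{alternating}, a single Jordan string cannot occur: on a string $e_1,\dots,e_k$ with $xe_j=e_{j-1}$, self-adjointness gives $\langle e_{i-1},e_j\rangle=\langle e_i,e_{j-1}\rangle$, so $\langle e_i,e_j\rangle$ depends only on $i+j$ and the restricted form is symmetric; but the restriction of an alternating form is alternating, so it must vanish identically, which is impossible for a nondegenerate orthogonal summand. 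Hence every summand is a hyperbolic pair, and one obtains a basis $\{v_{ij},v^*_{ij}\}$ as in the statement, with $xv_{ij}=v_{i,j-1}$ (or $0$), $xv^*_{ij}=v^*_{i,j+1}$ (or $0$), $\langle v_{ij},v^*_{i'j'}\rangle=\delta_{ii'}\delta_{jj'}$, and all other pairings zero. In particular $x$ has Jordan type $\pi\cup\pi$ for a partition $\pi$ with $|\pi|=n$, and two such operators are $\Sp_{2n}$-conjugate precisely when the partitions $\pi$ agree; so $\cN(\cS)/\Sp_{2n}\leftrightarrow\{\pi\vdash n\}$.

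\textbf{Stage 2: placing the vector.} Fix $x$ in the normal form above with associated partition $\pi$, and let $Z=Z_{\Sp_{2n}}(x)$. We must identify the $Z$-orbits on $V$ with the set of pairs of partitions $(\mu,\nu)$ with $\mu+\nu=\pi$, the orbit of $(\mu,\nu)$ being that of $v=\sum_{i=1}^{\ell(\mu)}v_{i,\mu_i}$. I would argue by induction on $n$. If $v=0$ this is the pair $(\emptyset,\pi)$. Otherwise let $d\geq 1$ be the order of $v$ (least $d$ with $x^dv=0$); observe $M:=\C[x]v$ is automatically isotropic, since $\langle x^av,x^bv\rangle=\langle x^{a+b}v,v\rangle=-\langle v,x^{a+b}v\rangle$ while self-adjointness of $x^{a+b}$ also gives $\langle x^{a+b}v,v\rangle=\langle v,x^{a+b}v\rangle$, forcing both to be $0$. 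The crux is then to use an element of $Z$ to split off a single hyperbolic pair of length-$d$ strings $W\oplus W^*$ with $v\equiv v_{1,d}$ modulo $(W\oplus W^*)^{\perp}$, to check that the $x$-stable orthogonal complement $V'=(W\oplus W^*)^{\perp}$ again has the Stage-1 form (for the symplectic space $V'$) and carries a vector $v'$ of strictly smaller order-sum, and to apply the inductive hypothesis to $(v',x|_{V'})$, which reassembles $v$ into $\sum v_{i,\mu_i}$. That distinct $(\mu,\nu)$ give distinct orbits follows by extracting combinatorial invariants from the interaction of $v$ with the flag $V\supseteq xV\supseteq x^2V\supseteq\cdots$ and the form: these recover $\pi$ (the Jordan type of $x$) together with the finer data of how many length-$\pi_i$ strings carry their $v$-component at each depth, i.e.\ recover $\mu$ and $\nu$ separately. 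This step is the symplectic refinement of the classification of orbits on the enhanced nilpotent cone for $\Gl$, and the necessary bookkeeping is essentially that of \cite{AH}.

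Putting the two stages together, each $\Sp_{2n}$-orbit on $\fN$ contains exactly one normal form $\big(\sum_{i=1}^{\ell(\mu)}v_{i,\mu_i},\,x\big)$, these are indexed by pairs $(\mu,\nu)$ with $\mu+\nu=\pi$ ranging over all of $\cQ_n$, and the Jordan type of $x$ is $\pi\cup\pi=(\mu+\nu)\cup(\mu+\nu)$, giving the claimed bijection. I expect the main obstacle to be Stage 2 — specifically the peeling-off step: choosing the hyperbolic pair $W\oplus W^*$ adapted simultaneously to $x$ \emph{and} to $v$ requires care precisely when the leading part of $v$ interacts nontrivially with the symplectic form, and one also has to verify that no extra orbits appear (equivalently, that the orbit count matches $|\cQ_n|$).
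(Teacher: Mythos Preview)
The paper does not prove this theorem at all; it is stated as a citation of \cite[Thm~6.1]{AH} (Achar--Henderson) and used as a black box, so there is no ``paper's own proof'' to compare against.

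That said, your outline is essentially the strategy followed in \cite{AH} (building on Travkin \cite{T} for the enhanced cone). Stage~1 is correct and cleanly argued: the observation that a single Jordan string for a self-adjoint nilpotent carries a symmetric form, hence an identically zero alternating form, is exactly the right way to see that $\Type(x)=\pi\cup\pi$. Stage~2 is where the real content lies, and you are honest that it is only a sketch. The peeling-off induction you describe is the right shape, but as written it is incomplete: you have not shown that one can always choose the hyperbolic pair $W\oplus W^*$ so that $v$ decomposes as $v_{1,d}+v'$ with $v'\in V'$ (this is delicate when several Jordan strings of the same length interact), nor have you specified the invariants that separate orbits. In \cite{AH} the separating invariants are the partitions $\Type(x,V)$ and $\Type(x,V/\C[x]v)$, which together recover $(\mu,\nu)$ --- this is precisely the criterion the present paper records in the Remark following Theorem~\ref{theorem:irreduciblecomponents}. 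Your ``depth of $v$-component on each string'' language is gesturing at the same data but would need to be made precise to finish the argument.
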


\begin{definition} If $(v,x)\in\mO_{(\mu,\nu)}\subset\fN$, we say that $(\mu,\nu)$ is the exotic Jordan type of $(v,x)$ and we denote that by $\eType(v,x)=(\mu,\nu)$. \end{definition}

\begin{definition}The flag variety for $Sp_{2n}(\mathbb{C})$, which we denote by $\mathcal{F}(V)$, is the variety consisting of all symplectic flags, that is sequences of subspaces $$ F_\bullet=(0=F_0\subseteq F_1\subseteq \cdots \subseteq F_n\subseteq \cdots \subseteq F_{2n-1}\subseteq F_{2n}=V)$$ 
where $\dim(F_i)=i$ and $F_i^\perp=F_{2n-i}$. 

Define the exotic Springer resolution $\pi: \widetilde{\fN} \twoheadrightarrow \fN$ as follows:  $$\widetilde{\mathfrak{N}} = \{(F_\bullet,(v,x))\in\cF(V)\times\fN~|~v\in F_n,~x(F_i)\subseteq F_{i-1}~\forall i=1,\ldots,2n\} $$
$$ \pi(F_\bullet,(v,x))=(v,x).$$
 \end{definition}

\vspace{5pt}


\begin{definition} \label{relative} Given two flags $F_{\bullet}, G_{\bullet} \in \cF(V)$, we say that $w(F_{\bullet}, G_{\bullet}) = w$ (i.e. the two flags are in relative position $w \in W(C_n)$) if there is a basis $\{v_{1}, \ldots, v_{n}, {v}_{\bar{n}}, \ldots, v_{\bar{1}}\}$ such that $\langle v_{i},v_{j} \rangle= \langle v_{\bar{i}},v_{\bar{j}} \rangle=0$ and $\langle v_{i},v_{\bar{j}} \rangle=\delta_{ij}$ such that for $1 \leq i,j,\leq  n$ we have 
\begin{eqnarray*}
F_{i} &=& \C \{ v_{n}, \ldots, v_{n-i+1}\} \quad \text{and} \quad F_{2n-i}=F_{i}^{\perp}, \quad \text{and}  \\ 
G_{j} &=& \C\{v_{w(n)}, \ldots, v_{w(n-j+1)}\} \quad \text{and} \quad G_{2n-j}=G_{j}^{\perp}
\end{eqnarray*}
\end{definition}
\subsection{Components of exotic Springer fibers}

Here we recall the results from \cite{NRS16} describing the irreducible components of exotic Springer fibers. 

\begin{definition} Given $(v,x) \in \mathbb{O}_{(\mu, \nu)}$, define the exotic Springer fibre $\cC_{(v,x)} = \pi^{-1}(v,x)$. Explicitly:
$$ \cC_{(v,x)} = \{(0 \subset F_1 \subset \ldots \subset F_{2n-1} \subset \C^{2n}) \, | \, \dim F_i =i, F_{i}^{\perp}=F_{2n-i}, v \in F_n, x(F_{i}) \subset F_{i-1} \} $$ \end{definition}

The main result of \cite{NRS16} was the following: 

\begin{theorem}\cite[Theorem 2.12]{NRS16} \label{theorem:irreduciblecomponents}
Let $(v,x) \in \mathbb{O}_{(\mu, \nu)}$, then there is an open dense subset $\cC_{(v,x)}^{\circ}\subset\cC_{(v,x)}$, and a surjective map $\Phi: \cC_{(v,x)}^{\circ} \longrightarrow \cT(\mu,\nu)$ which induces a bijection between irreducible components of $\cC_{(v;x)}$ and standard Young bitableaux of shape $(\mu,\nu)$:
\begin{eqnarray*}
\Irr \cC_{(v,x)} 			&\overset{\sim}\longleftrightarrow& \cT(\mu,\nu); \\ 
\overline{\Phi^{-1}(T)} 	& \longleftrightarrow&	T,
\end{eqnarray*}
These irreducible components all have the same dimension: $b(\mu,\nu)= |\nu| + \sum _{i \geq 1}(i-1)(\mu_i+\nu_i)$.  \end{theorem}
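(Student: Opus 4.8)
The plan is to adapt Spaltenstein's inductive analysis of type $A$ Springer fibres (\cite{Spa}) to the symplectic and exotic setting, running an induction on $n$ by peeling off the bottom step of the flag. Note first that a symplectic flag is completely determined by its isotropic part $0=F_0\subseteq F_1\subseteq\cdots\subseteq F_n$, and that the conditions $x F_i\subseteq F_{i-1}$ for $i\le n$ automatically force the analogous conditions for $n<i\le 2n$ since $x$ is self-adjoint; in particular every line $F_1$ that occurs lies in $\ker x$. I would then consider the morphism $\cC_{(v,x)}\to\{\,L\subseteq\ker x\text{ a line}\,\}$, $F_\bullet\mapsto F_1$, and observe that the fibre over $L=\C e$ is isomorphic to the exotic Springer fibre $\cC_{(v',x')}$ of the pair $(v',x')$ induced on the symplectic space $V':=L^{\perp}/L\cong\C^{2n-2}$, where $x'$ is induced from $x$ (legitimate as $xe=0$ and $x$ preserves $L^{\perp}$) and $v':=v+L$. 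This realises $\cC_{(v,x)}$, over a stratification of the base, as a family of exotic Springer fibres for smaller $n$; the base case $n=0$ is trivial.

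The key technical input is a lemma describing how the exotic Jordan type changes under this step: for $L$ a generic point of each irreducible stratum of the base, $\eType(v',x')=(\mu',\nu')$ is obtained from $(\mu,\nu)$ by deleting a single box; as $L$ ranges over all admissible lines the $(\mu',\nu')$ that occur are exactly those obtained by removing a removable corner of $\mu$ or of $\nu$; and the locus of lines producing a fixed such $(\mu',\nu')$ is irreducible, of codimension (inside the space of admissible lines) equal to $i-1$ if an $i$-th row box of $\mu$ is removed and $i$ if an $i$-th row box of $\nu$ is removed. This is where the extra vector $v$ is essential and where the argument genuinely departs from the type $A$ and ordinary type $C$ cases: the underlying Jordan type of $x$ alone, namely $(\mu+\nu)\cup(\mu+\nu)$, does not see the splitting of $\mu+\nu$ into $\mu$ and $\nu$, and it is the position of $v$ relative to the Jordan strings of $x$ (in a normal basis $v=\sum_{i\le\ell(\mu)}v_{i,\mu_i}$) that records whether the deleted box is a ``$\mu$-box'' or a ``$\nu$-box''. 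Establishing this dichotomy cleanly — and in particular pinning down the extra $+1$ in the dimension bookkeeping for $\nu$-boxes — is the main obstacle.

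Granting the lemma, I would define $\cC^{\circ}_{(v,x)}$ to be the locus where at every stage of the peeling $F_1$ is generic in its stratum, and build $\Phi$ recursively: at the outermost step record the deleted corner box (in $\mu$ or in $\nu$) with label $n$, then apply the inductively constructed map $\Phi'$ to the induced point of $\cC^{\circ}_{(v',x')}$ to fill in the boxes labelled $n-1,\ldots,1$; the increasing conditions hold automatically because the labels decrease with each peeling and at every stage a corner is removed, so $\Phi(F_\bullet)\in\cT(\mu,\nu)$ (compatibly with Definition~\ref{def:truncT}, with $\Phi'$ computing $T_{n-1}$). Matching the recursive decomposition $\cT(\mu,\nu)=\bigsqcup\cT(\mu',\nu')$ over corner removals with the stratification of $\cC_{(v,x)}$, an induction on $n$ shows $\Phi$ is surjective; that each $\Phi^{-1}(T)$ is irreducible (it fibres over an irreducible base stratum with fibre the inductively irreducible $(\Phi')^{-1}(T_{n-1})$) of dimension $b(\mu,\nu)$ independent of $T$; and hence that the $\overline{\Phi^{-1}(T)}$ are pairwise distinct irreducible closed subsets of common dimension $b(\mu,\nu)$. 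Since $\cC^{\circ}_{(v,x)}$ is open and dense, every irreducible component of $\cC_{(v,x)}$ meets some $\Phi^{-1}(T)$ densely, so the components are exactly the $\overline{\Phi^{-1}(T)}$, yielding the bijection and the equidimensionality.

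Finally, the same induction gives the dimension formula: writing $n(\lambda)=\sum_{i\ge1}(i-1)\lambda_i$, each $i$-th row box of $\mu$ contributes $i-1$ and each $i$-th row box of $\nu$ contributes $i$ to $\dim\cC_{(v,x)}$ by the codimension count in the key lemma, so
$$
\dim\cC_{(v,x)}\;=\;n(\mu)+n(\nu)+|\nu|\;=\;|\nu|+\sum_{i\ge1}(i-1)(\mu_i+\nu_i)\;=\;b(\mu,\nu).
$$
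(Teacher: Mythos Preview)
The present paper does not prove this theorem; it is quoted from \cite{NRS16}, and the only hint given here about the original argument is that it ``uses Spaltenstein's techniques from \cite{Spa}'' and that the description of generic lines in $\cB_{(\mu',\nu')}^{(\mu,\nu)}$ (Proposition~\ref{proposition:genericw}) was ``a key idea''. Your proposal is exactly this strategy: Spaltenstein's induction via $F_\bullet\mapsto F_1$, with the crucial step being the analysis of how $\eType$ changes when passing from $V$ to $F_1^\perp/F_1$. So the overall architecture is correct and matches the intended proof.

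One concrete slip: in your key lemma you say the stratum of lines giving a fixed $(\mu',\nu')$ has \emph{codimension} $i-1$ (resp.\ $i$) when a box is removed from row $i$ of $\mu$ (resp.\ $\nu$). These numbers are in fact the \emph{dimensions} of those strata, not their codimensions --- and indeed your final paragraph silently switches to treating them as dimensions when you add them up to get $n(\mu)+n(\nu)+|\nu|$. The inductive fibration gives $\dim\Phi^{-1}(T)=\dim(\text{base stratum})+b(\mu',\nu')$, so it is the base dimension that must equal $b(\mu,\nu)-b(\mu',\nu')=i-1$ or $i$. This is only a wording issue, but as stated the lemma is inconsistent with the computation that follows it.
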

\begin{remark}
A standard bitableau of shape $(\mu,\nu)$ is the same thing as a \emph{nested} sequence of bipartitions ending at $(\mu,\nu)$ i.e. a sequence of bipartitions
$$(\varnothing, \varnothing), (\mu^{(1)}, \nu^{(1)}), \ldots, (\mu^{(n)},\nu^{(n)})=(\mu,\nu)$$
such that $(\mu^{(i+1)},\nu^{(i+1)})$ is obtained from $(\mu^{(i)},\nu^{(i)})$ by adding one box. The identification is given by tracing the order in which the boxes are added according to the increasing sequence of numbers $1,2,\ldots,|\mu|+|\nu|$.
\end{remark}
\begin{example}
The standard bitableau $\left(\young(32,:5),\young(1,4)\right)$ corresponds to the nested sequence
$$ \left(\varnothing,\varnothing\right),\left(\varnothing,\yng(1)\right),\left(\yng(1),\yng(1)\right),\left(\yng(2),\yng(1)\right),\left(\yng(2),\yng(1,1)\right),\left(\young(\hfil\hfil,:\hfil),\yng(1,1)\right).$$
\end{example}
\begin{remark}
The map $\Phi$ of Theorem \ref{theorem:irreduciblecomponents} is defined as follows, if $F_\bullet\in \cC_{(v,x)}$, then
$$\Phi(F_\bullet)=(\eType(v+F_i,x|_{F_i^{\perp}/F_i}))_{i=0}^n.$$
For $F_\bullet\in\cC_{(v,x)}^{\circ}$ we have then that $\Phi(F_\bullet)$ is a nested sequence of bipartitions, i.e. a standard bitableau.
\end{remark}
\begin{remark} From Travkin \cite[Thm 1 and Cor 1]{T}, or Achar-Henderson \cite[Thm 6.1]{AH}, we know that $\eType(v,x)=(\mu,\nu)$ if and only if the following holds (here $\Type$ denotes the partition corresponding to the Jordan type of the corresponding nilpotent endomorphism):
\begin{eqnarray*}
\Type(x,V) &=&(\mu_1+\nu_1, \mu_1+\nu_1, \mu_2+\nu_2, \mu_2+\nu_2, \ldots ), \quad \text{and} \\
\Type(x,V/\C[x]v) &=& (\mu_1+\nu_1, \mu_2+\nu_1, \mu_2+\nu_2, \mu_3+\nu_2, \ldots ). 
\end{eqnarray*} \end{remark}

A key idea in proving Theorem \ref{theorem:irreduciblecomponents} was the following construction. 

\begin{proposition}\label{proposition:genericw} Let $(v,x)\in\fN$, with $\eType(v,x)=(\mu,\nu)$ and let $(\mu',\nu')$ be a bipartition of $n-1$ obtained from $(\mu,\nu)$ by decreasing either $\mu_m$ or $\nu_m$ by $1$. We define the set $$ \cB_{(\mu',\nu')}^{(\mu,\nu)}:=\{W \subset \ker(x) \cap (\mathbb{C}[x]v)^{\perp}\, | \, \eType(v+W,x_{|W^{\perp}/W})=(\mu',\nu')\, \} .$$ 
For a generic point $W\in\cB_{(\mu',\nu')}^{(\mu,\nu)}$, we have $W=\mathbb{C}w$ where
\begin{equation} \label{eqn:genericw}
w=\sum_{i=1}^{m} \alpha_{i}v_{i,1}+\beta_{i}v_{i,\lambda_i}^{*},
\end{equation} 
satisfies the following:
\begin{itemize}
\item  if $\mu'_m=\mu_m-1$, then $\sum_{i \in\Delta_{\leq m}}\beta_i=0$;

\item if $\nu'_m=\nu_m-1$, then $\sum_{i \in \Delta_{\leq m}}\beta_i \neq 0$.
\end{itemize}
\end{proposition}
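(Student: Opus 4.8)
The plan is to analyze the variety $\cB_{(\mu',\nu')}^{(\mu,\nu)}$ explicitly in terms of the normal basis $\{v_{ij},v_{ij}^*\}$ from the Achar–Henderson parametrization, and then compute the exotic Jordan type of $(v+W, x|_{W^\perp/W})$ for $W = \C w$ a generic line. First I would observe that $\ker(x) \cap (\C[x]v)^\perp$ is spanned by the vectors $v_{i,1}$ (all of which lie in $\ker x$) together with the vectors $v_{i,\lambda_i}^*$ for those $i$ with $\mu_i = 0$ (i.e. $v_{i,\lambda_i}^*$ is orthogonal to $\C[x]v = \Span\{v_{j,k} : k \le \mu_j\}$ precisely when $\mu_i = 0$) — wait, more carefully, $\C[x]v = \Span\{v_{i,1},\dots,v_{i,\mu_i} : 1 \le i \le \ell(\mu)\}$, so $(\C[x]v)^\perp$ is spanned by all $v_{ij}$ together with $v_{i,k}^*$ for $k \ge \mu_i + 1$; intersecting with $\ker x$, which kills $v_{i,1}$ and $v_{i,\mu_i+\nu_i}^*$, gives $\Span\{v_{i,1} : 1 \le i \le \ell(\lambda)\} \oplus \Span\{v_{i,\lambda_i}^* : \mu_i = 0\}$. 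Decreasing $\mu_m$ or $\nu_m$ by $1$ requires $m$ with $\lambda_m > 0$; the key point is that changing $\mu_m$ versus $\nu_m$ corresponds to different geometric behavior, and the claim is that only the indices $i$ with $\lambda_i = \lambda_m$ can appear (hence the sum runs over $i = 1,\dots,m$ after reordering so that $\Lambda_m$-indices are grouped), and among those, the $\beta_i$ coordinates — the $v_{i,\lambda_i}^*$ components — detect whether the $\mu$-part or the $\nu$-part drops.

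The main technical step is the Jordan-type computation. For $W = \C w$ with $w$ as in \eqref{eqn:genericw}, one forms $W^\perp/W$ with induced nilpotent $\bar x$ and induced vector $\bar v = v + W$, and must compute $\Type(\bar x, W^\perp/W)$ and $\Type(\bar x, (W^\perp/W)/\C[\bar x]\bar v)$, then invoke the Travkin / Achar–Henderson criterion (the last Remark in the excerpt) to read off the exotic Jordan type. I would use the standard fact (from Spaltenstein-type arguments, as in \cite{NRS16}) that passing to a generic hyperplane-type quotient by a line in $\ker x$ decreases exactly one part of the Jordan type of $x$ by $2$ (here by $1$ twice because of the doubled shape $\lambda \cup \lambda$), and that genericity forces this to be the part indexed by $m$ within its $\Lambda_m$-block. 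The delicate part is that the exotic Jordan type is finer than the ordinary one: both $(\mu',\nu')$ with $\mu'_m = \mu_m - 1$ and $(\mu',\nu')$ with $\nu'_m = \nu_m - 1$ give the \emph{same} ordinary Jordan type for $x$ on the quotient, so the distinction must come entirely from $\Type(\bar x, (W^\perp/W)/\C[\bar x]\bar v)$, which depends on how $w$ interacts with $\C[x]v$ — and this is exactly where the condition $\sum_{i \in \Delta_{\le m}} \beta_i = 0$ versus $\ne 0$ enters. Concretely, I expect that $\sum_{i\in\Delta_{\le m}}\beta_i$ governs whether a certain vector built from $v$ and the $v_{i,\lambda_i}^*$ survives in the quotient $(W^\perp/W)/\C[\bar x]\bar v$, shifting which of $\mu_1+\nu_1, \mu_2+\nu_1, \ldots$ decreases.

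The structure of the argument, then, is: (1) identify $\ker(x)\cap(\C[x]v)^\perp$ and write the general $W$ in it, reducing to the form \eqref{eqn:genericw} by a genericity/dimension count showing the non-$\Lambda_m$ coordinates vanish on the relevant component and that $\cB_{(\mu',\nu')}^{(\mu,\nu)}$ is irreducible of the expected dimension; (2) fix such a $w$ and explicitly choose a basis of $W^\perp/W$ adapted to $\bar x$, computing its Jordan type and confirming it is $(\lambda\cup\lambda)$ with one part dropped by $1$ in each copy — this pins down $\mu_m + \nu_m \mapsto \mu_m + \nu_m - 1$ but not whether $\mu_m$ or $\nu_m$ drops; (3) compute $\Type(\bar x, (W^\perp/W)/\C[\bar x]\bar v)$, tracking the image of $\bar v = v + W$ and the cyclic subspace it generates, and show the two cases are distinguished precisely by the vanishing of $S := \sum_{i\in\Delta_{\le m}}\beta_i$; (4) conclude via the Travkin/Achar–Henderson criterion that $S = 0 \iff \mu'_m = \mu_m - 1$ and $S \ne 0 \iff \nu'_m = \nu_m - 1$, and note that genericity guarantees the "$S \ne 0$" locus is open dense within the appropriate component while "$S = 0$" is the generic condition on the complementary component. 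The main obstacle I anticipate is step (3): correctly bookkeeping the cyclic subspace $\C[\bar x](v+W)$ inside the quotient by the line $\C w$, where $w$ itself has a component $\sum \alpha_i v_{i,1}$ that overlaps with the "socle" direction of $\C[x]v$ — here the $\alpha_i$'s ought to drop out (they affect the ordinary type of $\bar x$ but not the relative type), and isolating why only the $\beta_i$-combination $\sum_{i\in\Delta_{\le m}}\beta_i$ matters, as opposed to individual $\beta_i$ or a differently-weighted sum, will require care with the perp pairings $\langle v_{ij}, v^*_{i'j'}\rangle = \delta_{ii'}\delta_{jj'}$ and the definition of $\Delta_{\le m}$ as the indices $i \le m$ with $\nu_i = \nu_m$.
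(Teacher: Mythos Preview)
Your approach is genuinely different from the paper's and considerably longer. The paper does not compute Jordan types via the Travkin/Achar--Henderson criterion at all; instead it invokes \cite[Proposition~4.18]{NRS16}, which already characterises $\eType(v+W,x|_{W^\perp/W})$ in terms of the maximal $k$ for which $W^\perp \supseteq (x^{k-1})^{-1}(\C[x]v+W)$. Once that criterion is on the table, the $\beta$-condition drops out of a single pairing calculation: the containment $W^\perp \supseteq (x^{\lambda_m-1})^{-1}(\C[x]v+W)$ holds precisely when $\sum_{i\in\Delta_{\le m}}\beta_i=0$, and the two bullets follow. Your plan would essentially re-prove that proposition from scratch inside the present argument. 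That is feasible, but the paper buys brevity by citing the earlier work.

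Your proposal also contains two concrete errors that would derail the direct computation if not fixed. First, your identification of $\C[x]v$ with $\Span\{v_{i,1},\dots,v_{i,\mu_i}\}$ is wrong: $\C[x]v$ is cyclic of dimension $\mu_1$, not $|\mu|$, and is spanned by the vectors $x^k v=\sum_i v_{i,\mu_i-k}$. Consequently your description of $(\C[x]v)^\perp$ is too small; individual $v_{i,\lambda_i}^*$ with $\nu_i=0$ are not in the perp, but the combination $\sum_{i:\nu_i=0}\beta_i v_{i,\lambda_i}^*$ is whenever $\sum\beta_i=0$ --- which is part of why the stated condition appears. Second, your reading of \eqref{eqn:genericw} as ``only indices $i\in\Lambda_m$ appear, up to relabelling'' is a misreading: the sum really runs over all $i\le m$, and the set $\Delta_{\le m}$ in the condition can contain indices with $\lambda_i>\lambda_m$ (those with $\nu_i=\nu_m$ but $\mu_i>\mu_m$). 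Restricting to $\Lambda_m$ would discard exactly the coefficients that enter the condition you are trying to establish.
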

\begin{proof} Suppose first that  that $(\mu', \nu')$ is obtained from $(\mu,\nu)$ by decreasing $\mu_m$ by $1$. By \cite[Proposition 4.18]{NRS16}, this implies that  $W^{\perp} \supseteq (x^{\lambda_m-1})^{-1}(\C[x]v+W)$ but $W^{\perp} \not\supseteq (x^{\lambda_m})^{-1}(\C[x]v+W)$, which requires  $\sum_{i \in\Delta_{\leq m}}\beta_i=0$. \vspace{5pt}

Now suppose that $(\mu', \nu')$ is obtained from $(\mu,\nu)$ by decreasing $\nu_m$ by $1$. Then again \cite[Proposition 4.18]{NRS16}, this implies that 
$$W^{\perp} \supseteq (x^{\mu_{m''}+\nu_m-1})^{-1}(\C[x]v+W) \quad \text{but} \quad W^{\perp} \not\supseteq (x^{\mu_{m+1}+\nu_m})^{-1}(\C[x]v+W),$$ which requires that $\sum_{i \in \Delta_{\leq m}}\beta_i \neq 0$ if $\max \Gamma_m=\max \Delta_m=m$ (for otherwise, we would be decreasing $\mu_m$ by $1$). There are no conditions on the $\beta_i$ if $\max \Gamma_m > \max \Delta_m=m$ or if $\mu_m=0$, but even in that case, the equation gives an open condition which is true generically. 
\end{proof}
\begin{remark} \label{corollary:genericw}
In the case where $\mu'_m=\mu_m-1$, and $\nu_{m-1} > \nu_m$, this implies that $\beta_m=0$ in the expression \eqref{eqn:genericw} for the spanning vector $w$ of $W$. \end{remark}

We record the below Proposition for future use in Section \ref{section:generichyperplanes}. 

\begin{proposition} \label{proposition:perpendicular1}
Let $(\mu',\nu')$ obtained from $(\mu,\nu)$ by decreasing $\mu_m$ or $\nu_m$ by $1$ and let $W, X\in\cB_{(\mu',\nu')}^{(\mu,\nu)}$ be two generic points. Then $X \subset W^{\perp}$ if and only if  
\begin{enumerate}[(a)]
\item $\lambda_m \geq 2$; or 
\item $\lambda_m=\mu_m=1$, $\nu_{m-1} > \nu_m=0$ and $\mu'_m=\mu_m-1$.
\end{enumerate}
\end{proposition}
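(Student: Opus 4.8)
The plan is to reduce everything to the explicit form of the generic spanning vectors provided by Proposition \ref{proposition:genericw}, together with the symplectic orthogonality of the normal basis. Write $W = \C w$ and $X = \C x$ (abusing notation, say $X = \C u$ to avoid clashing with the nilpotent), where by \eqref{eqn:genericw} we have $w = \sum_{i=1}^m \alpha_i v_{i,1} + \beta_i v_{i,\lambda_i}^*$ and $u = \sum_{i=1}^m \alpha_i' v_{i,1} + \beta_i' v_{i,\lambda_i}^*$, with the constraints on $\sum_{i \in \Delta_{\leq m}} \beta_i$ (resp.\ $\sum_{i\in\Delta_{\leq m}}\beta_i'$) dictated by whether we are in the $\mu_m$-case or the $\nu_m$-case. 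The condition $X \subset W^\perp$ is the single scalar equation $\langle u, w\rangle = 0$. Using $\langle v_{ij}, v_{i'j'}^*\rangle = \delta_{ii'}\delta_{jj'}$ and $\langle v_{ij}, v_{i'j'}\rangle = \langle v_{ij}^*, v_{i'j'}^*\rangle = 0$, this pairing collapses unless the "$v$" index of one vector matches the "$v^*$" index of the other, i.e.\ unless some $v_{i,1}$ is paired with some $v_{i,\lambda_i}^*$; that happens precisely when $\lambda_i = 1$ for that $i$. So $\langle u,w\rangle = \sum_{i \leq m,\ \lambda_i = 1}(\alpha_i \beta_i' - \beta_i \alpha_i')$ (up to sign from the symplectic form), and the whole question is whether this sum vanishes — always, never, or only under extra hypotheses.

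First I would dispose of case (a), $\lambda_m \geq 2$. Since $\lambda$ is weakly decreasing and $i \leq m$ forces $\lambda_i \geq \lambda_m \geq 2$, there is no index $i \leq m$ with $\lambda_i = 1$, so the sum is empty and $\langle u, w\rangle = 0$ identically: $X \subset W^\perp$ always holds. Next, case (b): here $\lambda_m = \mu_m = 1$, so $\nu_m = 0$, and $\nu_{m-1} > \nu_m = 0$; we are in the $\mu_m' = \mu_m - 1$ case. By Remark \ref{corollary:genericw}, the hypothesis $\nu_{m-1} > \nu_m$ together with $\mu_m' = \mu_m - 1$ forces $\beta_m = 0$ in $w$, and likewise $\beta_m' = 0$ in $u$. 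Now the only index $i \leq m$ with $\lambda_i = 1$ is $i = m$ itself (since for $i < m$, $\lambda_i \geq \lambda_{m-1}\geq \mu_{m-1}+\nu_{m-1} \geq 1 + 1 = 2$, using $\nu_{m-1}>0$ and $\mu_{m-1}\geq\mu_m=1$ — this monotonicity check needs to be made carefully but is routine). Hence the sum reduces to $\alpha_m \beta_m' - \beta_m \alpha_m' = 0$, so again $X \subset W^\perp$ unconditionally.

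Finally I would argue the converse: if neither (a) nor (b) holds, then for generic $W, X$ we have $X \not\subset W^\perp$. If (a) fails then $\lambda_m = 1$, hence all $\lambda_i = 1$ for $i\leq m$ as well. The remaining possibilities are: (i) the $\nu_m$-case (decreasing $\nu_m$ by $1$), or (ii) the $\mu_m$-case but with the side condition of (b) failing, i.e.\ either $\mu_m \ne 1$ — impossible since $\lambda_m = 1$ and $\mu_m\le\lambda_m$, unless $\mu_m=0$, in which case decreasing $\mu_m$ is not allowed, so this subcase is vacuous — or $\nu_{m-1} = \nu_m$. In either surviving situation I claim the bilinear form $(W,X) \mapsto \langle u, w\rangle = \sum_{i\le m,\ \lambda_i=1}(\alpha_i\beta_i' - \beta_i\alpha_i')$ is not identically zero on $\cB_{(\mu',\nu')}^{(\mu,\nu)} \times \cB_{(\mu',\nu')}^{(\mu,\nu)}$: one exhibits a single pair of admissible $(\alpha,\beta)$, $(\alpha',\beta')$ satisfying the linear constraint from Proposition \ref{proposition:genericw} for which the sum is nonzero — for instance in the $\nu_m$-case the constraint $\sum_{i\in\Delta_{\le m}}\beta_i \neq 0$ is open so one can freely choose $\beta_m \ne 0$, $\alpha_m' \ne 0$, and all other coordinates zero, giving $\langle u,w\rangle = -\beta_m\alpha_m' \ne 0$; when $\nu_{m-1}=\nu_m$ in the $\mu_m$-case, $\beta_m$ need not vanish by Remark \ref{corollary:genericw}, and the analogous choice works. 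Since the vanishing locus of a nonzero polynomial is a proper closed subset, genericity of $W$ and $X$ gives $\langle u, w\rangle \ne 0$, i.e.\ $X \not\subset W^\perp$.

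I expect the main obstacle to be bookkeeping rather than conceptual: carefully justifying exactly which indices $i \leq m$ have $\lambda_i = 1$ in each subcase (this uses the interplay of $\mu$, $\nu$, $\lambda$ monotonicity and the definitions of $\Gamma_m, \Delta_m, \Lambda_m$), and making sure that "generic $W$" and "generic $X$" can be taken simultaneously generic so that a single non-vanishing witness suffices — this is fine because $\cB \times \cB$ is irreducible and a nonzero polynomial on it is nonzero on a dense open set. One should also double-check the sign conventions coming from $\langle\cdot,\cdot\rangle$ and whether the normal basis identities are being applied to $v_{i,1}$ versus $v_{i,\lambda_i}^*$ in the correct order, but these do not affect the vanishing/non-vanishing dichotomy.
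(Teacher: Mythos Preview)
Your approach is essentially identical to the paper's: write the generic spanning vectors as in \eqref{eqn:genericw}, compute $\langle w,\mathtt{x}\rangle = \sum_{i\le m,\ \lambda_i=1}(\alpha_i\delta_i - \beta_i\gamma_i)$, and analyse when this vanishes. One slip to correct: the sentence ``If (a) fails then $\lambda_m = 1$, hence all $\lambda_i = 1$ for $i\le m$ as well'' is false (monotonicity gives $\lambda_i \ge \lambda_m = 1$, not equality), but this does not affect your argument since your witness only uses the $i=m$ term; just rephrase to ``hence the index $i=m$ contributes to the sum''.
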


\begin{proof}
Let $W=\mathbb{C}w$ and $X=\mathbb{C}\mathtt{x}$ with 
\begin{equation} \label{eqn:genericwx}
w=\sum_{i=1}^{m} \alpha_{i}v_{i,1}+\beta_{i}v_{i,\lambda_i}^{*} \quad \text{and} \quad \mathtt{x}=\sum_{i=1}^{m} \gamma_{i}v_{i,1}+\delta_{i}v_{i,\lambda_i}^{*}.
\end{equation}

Then 
\begin{equation}\label{eqn:xperpw} \langle w,\mathtt{x} \rangle =\sum_{i\leq m ~|~\la_i=1}(\alpha_i\delta_i-\beta_i\gamma_i).\end{equation}
If $\la_m\geq 2$, then the above sum is an empty sum, so $\langle w,x \rangle =0$ always.
%
In the case where $\lambda_m=\mu_m=1$ and $\nu_{m-1} > \nu_m=0$ with $\mu'_m=\mu_m-1$, by Corollary \ref{corollary:genericw} we have $\beta_m=\delta_m=0$ for generic points, so 
$$\langle w, \mathtt{x} \rangle=\alpha_m\cdot 0 - \gamma_m \cdot 0=0.$$ 

In all other cases the equation \eqref{eqn:xperpw} will give generically a nonzero result.
\end{proof}
\begin{corollary} \label{corollary:notperp}
With $W$ and $X$ as in Proposition \ref{proposition:perpendicular1}, we have that $X \not\subseteq W^{\perp}$ if and only if 
\begin{enumerate}[(a)]
\item $\lambda_m=\mu_m=1$ and $\nu_{m-1}=0$;
\item $\lambda_m=\nu_m=1$ (which implies $\mu_m=0$). 
\end{enumerate}
\end{corollary}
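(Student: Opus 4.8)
The plan is to deduce Corollary~\ref{corollary:notperp} directly from Proposition~\ref{proposition:perpendicular1} by taking the logical negation of the two conditions (a) and (b) in that proposition and rewriting the result in a clean form. Since $W$ and $X$ are generic points of $\cB_{(\mu',\nu')}^{(\mu,\nu)}$, Proposition~\ref{proposition:perpendicular1} tells us that $X\subseteq W^\perp$ holds precisely when $\lambda_m\geq 2$, or when $\lambda_m=\mu_m=1$, $\nu_{m-1}>\nu_m=0$ and $\mu'_m=\mu_m-1$. Hence $X\not\subseteq W^\perp$ holds precisely in the complementary situation: $\lambda_m=1$ and we are \emph{not} in the second bulleted case.

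First I would split the analysis according to whether the single box in row $m$ of the partition $\lambda=\mu+\nu$ belongs to $\mu$ or to $\nu$; since $\lambda_m=1$ forces $\mu_m+\nu_m=1$, exactly one of $\mu_m=1,\nu_m=0$ or $\mu_m=0,\nu_m=1$ occurs. In the case $\lambda_m=\nu_m=1$ (so $\mu_m=0$), the operation decreasing $\mu_m$ by $1$ is impossible, so necessarily $(\mu',\nu')$ is obtained by decreasing $\nu_m$; then case (b) of Proposition~\ref{proposition:perpendicular1} cannot apply because it requires $\mu'_m=\mu_m-1$, and case (a) fails since $\lambda_m=1$. So here $X\not\subseteq W^\perp$ generically, matching item (b) of the corollary. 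In the case $\lambda_m=\mu_m=1$ (so $\nu_m=0$), I would further subdivide by whether $\nu_{m-1}=0$ or $\nu_{m-1}>0$: if $\nu_{m-1}=0$, then case (b) of Proposition~\ref{proposition:perpendicular1} fails on the hypothesis $\nu_{m-1}>\nu_m$, so $X\not\subseteq W^\perp$, giving item (a) of the corollary; and if $\nu_{m-1}>0$, I need to check that $X\subseteq W^\perp$, which by Proposition~\ref{proposition:perpendicular1}(b) requires in addition that $\mu'_m=\mu_m-1$ --- but this is automatic, since with $\mu_m=1$ and $\nu_m=0$ the only box available to remove in row $m$ is the one in $\mu$, so the decrement must be of $\mu_m$. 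This confirms that whenever $\lambda_m=\mu_m=1$ and $\nu_{m-1}>0$ we are in the ``$X\subseteq W^\perp$'' regime, hence excluded from the corollary.

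The only subtle point, and the step I expect to require the most care, is the observation in the last case that the condition $\mu'_m=\mu_m-1$ is forced rather than an extra constraint; this relies on the fact that $(\mu',\nu')$ is obtained from $(\mu,\nu)$ by removing a single box from row $m$ of one of the two diagrams, together with $\nu_m=0$, so that removing a box from $\nu$ in row $m$ is not an option. I would also double-check that the quantity $\nu_{m-1}$ is well-defined and that the boundary convention $\nu_0=+\infty$ (or simply that $m\geq 2$ whenever $\nu_{m-1}$ is referenced, with $m=1$ falling under $\nu_{m-1}=0$ by convention) is consistent with how it was used in Proposition~\ref{proposition:perpendicular1} and Remark~\ref{corollary:genericw}. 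Once these bookkeeping matters are settled, the corollary follows by pure propositional logic from the proposition, so no new geometric or computational input is needed.
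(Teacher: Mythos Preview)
Your proposal is correct and is precisely the argument the paper has in mind: the corollary is stated without proof, as the immediate logical complement of Proposition~\ref{proposition:perpendicular1}, and your case split on whether the unique box in row $m$ of $\lambda$ lies in $\mu$ or in $\nu$, together with the observation that $\nu_m=0$ forces $\mu'_m=\mu_m-1$, is exactly the bookkeeping needed. On the boundary issue you flag: the correct reading is that case~(a) implicitly carries $m\geq 2$ (equivalently $\nu_0=+\infty$), since for $m=1$ with $\mu_1=1$ Corollary~\ref{corollary:uniquegeneric} gives $X=W\subseteq W^\perp$; the convention $\nu_0=0$ would be inconsistent here.
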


\begin{corollary} \label{corollary:uniquegeneric}
Let $X$ and $W$ as above, and suppose $(\mu',\nu')$ is obtained from $(\mu,\nu)$ by decreasing $\mu_1$ by $1$. Then $X=W$. 
\end{corollary}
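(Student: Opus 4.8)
The plan is to apply Proposition \ref{proposition:genericw} directly with $m = 1$. Since $(\mu',\nu')$ is obtained from $(\mu,\nu)$ by decreasing $\mu_1$, we are in the situation of that proposition with $m = 1$, so the generic spanning vectors of $W$ and $X$ coming from \eqref{eqn:genericwx} each collapse to a single term: $w = \alpha_1 v_{1,1} + \beta_1 v_{1,\lambda_1}^{*}$ and $\mathtt{x} = \gamma_1 v_{1,1} + \delta_1 v_{1,\lambda_1}^{*}$, because in \eqref{eqn:genericw} the index $i$ runs only over $1 \leq i \leq m = 1$.

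Next I would pin down the set $\Delta_{\leq 1}$. By Definition \ref{definition:gammadeltasets}, $\Delta_1 = \{i : \nu_i = \nu_1\}$ certainly contains $1$, and $\Delta_{\leq 1} = \{ i \in \Delta_1 : i \leq 1\} = \{1\}$. Since $\mu'_1 = \mu_1 - 1$, the first bullet of Proposition \ref{proposition:genericw} then gives $\sum_{i \in \Delta_{\leq 1}} \beta_i = \beta_1 = 0$, and by the same argument applied to the generic point $X$ we get $\delta_1 = 0$. Hence $w = \alpha_1 v_{1,1}$ and $\mathtt{x} = \gamma_1 v_{1,1}$. Finally, since $W = \mathbb{C}w$ is one-dimensional we must have $\alpha_1 \neq 0$, so $W = \mathbb{C} v_{1,1}$; likewise $X = \mathbb{C} v_{1,1}$, and therefore $X = W$.

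I do not anticipate any genuine obstacle here: the content of the statement is really just the observation that, when the removed box sits in the first row, the condition in Proposition \ref{proposition:genericw} forces $\beta_1 = 0$, so $\cB_{(\mu',\nu')}^{(\mu,\nu)}$ degenerates to the single point $\mathbb{C} v_{1,1}$. The only point to keep an eye on is the degenerate case $\lambda_1 = 1$, where $v_{1,\lambda_1}^{*} = v_{1,1}^{*}$; but this does not affect the argument, since the coefficient of that vector is forced to vanish regardless, and the conclusion $W = X = \mathbb{C} v_{1,1}$ still holds.
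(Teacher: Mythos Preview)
Your argument is correct and follows exactly the same route as the paper: apply Proposition~\ref{proposition:genericw} with $m=1$, observe that $\Delta_{\leq 1}=\{1\}$ forces $\beta_1=0$ (and likewise $\delta_1=0$), so both generic points reduce to $\mathbb{C}v_{1,1}$. The paper's proof is just a one-line version of what you wrote.
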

\begin{proof}
In this case, since $m=1$, by Proposition \ref{proposition:genericw} we have that $\beta_1=0$ in \eqref{eqn:genericw}, hence the variety  $\cB_{(\mu',\nu')}^{(\mu,\nu)}$ consists of the single point $\{\mathbb{C}v_{1,1}\}=W=X$.
\end{proof}
\section{The Algorithm} \label{section:algorithm}
In this section we present the Exotic Robinson-Schensted algorithm in both directions, a `reverse bumping' direction from bitableaux to Weyl group elements and then an `insertion' algorithm in the other direction. 

\subsection{Reverse bumping Algorithm} \label{section:bumpingalgorithm}
The algorithm that we present now takes as an input a pair of standard Young bitableaux and produces an element in the Weyl group $W(C_n)$. Let $T$ and $R$ be two standard Young bitableaux. We will think of $T$ as the ``insertion'' bitableau and $R$ as the ``recording'' bitableau and produce a word $\RS(T,R) \in W(C_n)$ as follows.

\begin{definition}Let $1\leq s\leq n$ and let $T$ be a bitableau that is increasing going away from the center and going down (standard condition) and does not contain the number $s$. An \emph{available position} for $s$ in $T$ is a position $(i,j)$ in $T$ such that the number in that position is smaller than $s$ and if you replace that number with $s$, the increasing standard condition is still satisfied. This is equivalent to saying that $(i,j)$ is a corner of the tableau $T_s$ from Definition \ref{def:truncT}.
\end{definition}
\begin{example}
Let $s=13$, $T=\left(\young(\y31,\yd65,\ye\ya9);\young(4\yf,8\yg,\yh)\right)$, then the available positions for $13$ are the ones containing the numbers $10$, $11$ and $8$.
\end{example}

\begin{definition}
Given a bitableau $T$, of shape $(\mu,\nu)$, let $\mathscr{R}$ be the set of rows of the two tableaux that comprise $T$, corresponding to the parts of the partitions $\mu$ and $\nu$. We number the rows as follow,
$$ \mu_1=1,\quad \nu_1=2,\quad \mu_2=3,\quad \nu_2=4,\quad \mu_3=5,\ldots $$
So the first row of the left tableau is first, followed by the first row of the right tableau, followed by the second row of the left tableau and so on.
\end{definition}

\subsubsection*{The Algorithm}
Let $T,R$ be two standard bitableaux with $n$ boxes. 
\begin{enumerate}
\item Start with $k=n$.
\item Find the position in $R$ containing $k$. Let $s$ be the number in the same position in the bitableau $T$ and $m\in\mathscr{R}$ be the row in which $s$ appears. Let $R'$ be the bitableau obtained from $R$ by removing $k$ and $T'$ the bitableau obtained from $T$ by removing $s$.
\item If $m=1$, set $w(k)=s$. If $k=1$ stop: the algorithm has ended, otherwise, return to (2) with $k$ replaced by $k-1$.
\item If $m\neq 1$, consider all the available positions for $s$ in $T'$ that are in rows $\{r\in\mathscr{R}~|~r\geq m-1\}$.
\item If there are no available positions in those rows, let $w(k)=\bar{s}$ and return to (2) with $k$ replaced by $k-1$.
\item Otherwise, if there are available positions, let $(i,j)$ be the one in the smallest numbered row (notice that there can be at most one available position per row). Let $T'$ be the bitableau obtained by replacing the number in position $(i,j)$ with $s$, and let $s'$ be the number that has been displaced. Let $m\in\mathscr{R}$ be the row where $s'$ was. Return to (3) with $s$ replaced by $s'$.
\end{enumerate}

\begin{example}
Let $T=\left(\young(1,3,7); \young(24,56) \right)$, $R=\left(\young(2,5,6); \young(13,47) \right)$. We compute what element in $W(C_7)$ corresponds to this pair of bitableaux. For ease of notation we simply write the contents of the tableaux without the boxes in an array divided by the wall. 

$$
\begin{array}{cccccccc}

\begin{tikzpicture}[scale=0.5]
\draw[-,line width=2pt] (0,3.5) to (0,0.5);
\node[left] at (0,1) {$7$};
\node[left] at (0,2) {$3$};
\node[left] at (0,3) {$1$};
\node[right] at (0,3) {$2$};
\node[right] at (1,3) {$4$};
\node[right] at (0,2) {$5$};
\node[right] at (1,2) {$6$};

\node[left] at (-1,2) {$T=$};

\draw[-,line width=2pt] (0,-3.5) to (0,-0.5);
\node[left] at (0,-1) {$2$};
\node[left] at (0,-2) {$5$};
\node[left] at (0,-3) {$6$};
\node[right] at (0,-2) {$4$};
\node[right] at (1,-2) {$7$};
\node[right] at (0,-1) {$1$};
\node[right] at (1,-1) {$3$};
\node[left] at (-1,-2) {$R=$};

\draw[->] (1,0) -- (2,0);
\node[above] at (1.5,0) {\tiny{$w(7)=1$}};
\end{tikzpicture}
&

\begin{tikzpicture}[scale=0.5]
\draw[-,line width=2pt] (0,3.5) to (0,0.5);
\node[left] at (0,1) {$7$};
\node[left] at (0,2) {$6$};
\node[left] at (0,3) {$2$};
\node[right] at (0,3) {$3$};
\node[right] at (1,3) {$4$};
\node[right] at (0,2) {$5$};

\draw[-,line width=2pt] (0,-3.5) to (0,-0.5);
\node[left] at (0,-1) {$2$};
\node[left] at (0,-2) {$5$};
\node[left] at (0,-3) {$6$};
\node[right] at (0,-2) {$4$};

\node[right] at (0,-1) {$1$};
\node[right] at (1,-1) {$3$};

\draw[->] (1,0) -- (2,0);
\node[above] at (1.5,0) {\tiny{$w(6)=\bar{5}$}};
\end{tikzpicture}

&
\begin{tikzpicture}[scale=0.5]
\draw[-,line width=2pt] (0,3.5) to (0,0.5);

\node[left] at (0,2) {$6$};
\node[left] at (0,3) {$2$};
\node[right] at (0,3) {$3$};
\node[right] at (1,3) {$4$};
\node[right] at (0,2) {$7$};

\draw[-,line width=2pt] (0,-3.5) to (0,-0.5);
\node[left] at (0,-1) {$2$};
\node[left] at (0,-2) {$5$};

\node[right] at (0,-2) {$4$};

\node[right] at (0,-1) {$1$};
\node[right] at (1,-1) {$3$};

\draw[->] (1,0) -- (2,0);
\node[above] at (1.5,0) {\tiny{$w(5)=2$}};
\end{tikzpicture}

&
\begin{tikzpicture}[scale=0.5]
\draw[-,line width=2pt] (0,3.5) to (0,0.5);

\node[left] at (0,3) {$4$};
\node[right] at (0,3) {$3$};
\node[right] at (1,3) {$6$};
\node[right] at (0,2) {$7$};

\draw[-,line width=2pt] (0,-3.5) to (0,-0.5);
\node[left] at (0,-1) {$2$};

\node[right] at (0,-2) {$4$};

\node[right] at (0,-1) {$1$};
\node[right] at (1,-1) {$3$};
\draw[->] (1,0) -- (2,0);
\node[above] at (1.5,0) {\tiny{$w(4)=\bar{7}$}};
\end{tikzpicture}

&
\begin{tikzpicture}[scale=0.5]
\draw[-,line width=2pt] (0,3.5) to (0,0.5);

\node[left] at (0,3) {$4$};
\node[right] at (0,3) {$3$};
\node[right] at (1,3) {$6$};

\draw[-,line width=2pt] (0,-3.5) to (0,-0.5);
\node[left] at (0,-1) {$2$};

\node[right] at (0,-1) {$1$};
\node[right] at (1,-1) {$3$};
\draw[->] (1,0) -- (2,0);
\node[above] at (1.5,0) {\tiny{$w(3)=4$}};
\end{tikzpicture}

&

\begin{tikzpicture}[scale=0.5]
\draw[-,line width=2pt] (0,3.5) to (0,0.5);
\node[left] at (0,3) {$6$};
\node[right] at (0,3) {$3$};
\draw[-,line width=2pt] (0,-3.5) to (0,-0.5);
\node[left] at (0,-1) {$2$};
\node[right] at (0,-1) {$1$};
\draw[->] (1,0) -- (2,0);
\node[above] at (1.5,0) {\tiny{$w(2)=6$}};
\end{tikzpicture}

&
\begin{tikzpicture}[scale=0.5]
\draw[-,line width=2pt] (0,3.5) to (0,0.5);
\node[right] at (0,3) {$3$};
\draw[-,line width=2pt] (0,-3.5) to (0,-0.5);
\node[right] at (0,-1) {$1$};
\draw[->] (1,0) -- (2,0);
\node[above] at (1.5,0) {\tiny{$w(1)=\bar{3}$}};

\end{tikzpicture}
\end{array} 
$$

So in this case we have $\RS(T,R) = w=\bar{3}64\bar{7}2\bar{5}1$. \vspace{5pt}

We explain in more detail the first step of this algorithm which yields $w(7)=1$. At the first stage, $7$ is in the $(2,3)$ position of $R$ and $6$ is in the corresponding position of $T$. The following table shows where $6$ and then subsequent numbers move to according to the rules; at each stage, we show the restricted tableau defined by the number that is being moved:
\vspace{10pt}

\begin{center}
\begin{tabular}{ m{2cm}  m{3cm}  m{5cm}  b{5cm} }
$(s,m)$ & Tableau & Rule and Action \\
\hline
$(6,2)$
&
\begin{tikzpicture}[scale=0.5]
\draw[-,line width=2pt] (0,3.5) to (0,1.5);
\node[left] at (0,2) (3){$3$};
\node[left] at (0,3) {$1$};
\node[right] at (0,3) {$2$};
\node[right] at (1,3) {$4$};
\node[right] at (0,2) {$5$};
\node[right] at (1,2) (6){$6$};
\draw (6) edge[thick, color=blue, ->, out=270, in=270] (3);
\node[left] at (-1,2) {$T_6=$};
\end{tikzpicture} 

& The smallest available position is in the box occupied by $3$ \\
\hline
$(3,2)$ 
&
\begin{tikzpicture}[scale=0.5]
\draw[-,line width=2pt] (0,3.5) to (0,1.5);
\node[left] at (0,2) (3){$3$};
\node[left] at (0,3) {$1$};
\node[right] at (0,3)(2) {$2$};
\draw (3) edge[thick, color=blue, ->, out=0, in=270] (2);
\node[left] at (-1,2) {$T_3=$};
\end{tikzpicture}
& 
The smallest available position in the box occupied by $2$\\
\hline
$(2,1)$ 
&
\begin{tikzpicture}[scale=0.5]
\draw[-,line width=2pt] (0,3.5) to (0,2.5);
\node[left] at (0,3) (1){$1$};
\node[right] at (0,3)(2) {$2$};
\draw (2) edge[thick, color=blue, ->, out=270, in=300] (1);
\node[left] at (-1,3) {$T_2=$};
\end{tikzpicture}
& 
The smallest available position is in the box occupied by $1$ \\
\hline
$(1,1)$ 
&
\begin{tikzpicture}[scale=0.5]
\draw[-,line width=2pt] (0,3.5) to (0,2.5);
\node[left] at (0,3) (1){$1$};
\node[right] at (-2,4) (2) {};
\draw (1) edge[thick, color=blue, ->, out=145, in=300] (2);
\node[left] at (-1,3) {$T_1=$};
\end{tikzpicture}
& Algorithm stops here, $w(7) = 1$
\end{tabular}
\end{center}

\vspace{5pt}

\begin{remark} A similar calculation shows that $\RS(R,T) =w^{-1}=75\bar{1}3\bar{6}2\bar{4}$. The fact that exchanging the tableaux gives the inverse element of the Weyl group is a feature of the ordinary Robinson-Schensted Correspondence in Type A and is true also in our setting. One can deduce this from Theorem \ref{theorem:main}. \end{remark}
\end{example}

\subsection{ Insertion Algorithm}
We present the algorithm that takes a signed permutation word in $W(C_n)$ and produces a pair of standard bitableaux. Let $w=w_1\ldots w_n$ where $w_i \in \{1, \ldots, n \} \cup \{\bar{1}, \ldots, \bar{n}\}$. 

\vspace{5pt}

\begin{definition} Let $1\leq s\leq n$ and let $T$ be a bitableau that is increasing going away from the center and going down (standard condition) and does not contain the number $s$. An \emph{insertable position} for $s$ is a position $(i,j)$, which is either outside of $T$ but adjacent to a box of $T$, or in $T$ such that the number in that position is bigger than $s$, and such that if you insert $s$ there (possibly replacing a number), the resulting shape is still a bipartition and the increasing standard condition is still satisfied. 
\end{definition}
\begin{example}
Let $s=13$, $T=\left(\young(\y31,\yd65,\ye\ya9);\young(4\yf,8\yg,\yh)\right)$, then the insertable positions for $13$ are to the left of the number $10$, under the number $9$, in addition to the ones containing the numbers $14$, $16$ and $18$.
\end{example}

Remember that the rows of a bitableau are ordered as in Definition 3.3.
\subsubsection*{The Algorithm}
Let $w=w_1w_2\ldots w_n$, with $w_i\in \{1,2,\ldots,n\}\cup\{\bar{1},\bar{2},\ldots,\bar{n}\}$ be a signed permutation with $n$ letters.  
\begin{enumerate}
\item We set $T,R$ to be two empty standard bitableaux, and we start with $k=1$.
\item If $w_k=s\in\{1,2\ldots,n\}$, add a box containing $s$ in the insertable position (which always exists) in row $1$ of $T$.
\item If $w_k=\bar{s}\in\{\bar{1},\bar{2},\ldots,\bar{n}\}$, add a box containing $s$ in the insertable position in either the first column to the left or the first column to the right of the wall of $T$, whichever one has the highest row number. (Notice there there is always an insertable position in those columns.)
\item Set $m$ to be the number of the row where $s$ was inserted.
\item If $s$ was added to an empty position, add a box with the number $k$ to the same position in $R$. If $k=n$ stop, otherwise, go back to (2) with $k$ replaced by $k+1$.
\item Otherwise, if a number $s'$ was replaced by $s$, consider the insertable positions for $s'$ in $T$ that are in rows $\{r\in\mathscr{R}~|~r\leq m+1\}$ (there is always at least one). Add a box containing $s'$ in the position among those with the highest row number. Replace $s$ with $s'$ and return to (4).
\end{enumerate}

\begin{theorem}
The bumping algorithm and insertion algorithm are mutual inverses of each other. 
\end{theorem}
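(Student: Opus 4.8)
The statement asserts that the reverse bumping algorithm $\RS(T,R)$ and the insertion algorithm are mutually inverse bijections. Since both algorithms proceed one step at a time — the insertion algorithm processes letters $w_1, w_2, \ldots, w_n$ in order and the reverse bumping processes $k = n, n-1, \ldots, 1$ — the natural strategy is to prove that a single step of one algorithm exactly undoes a single step of the other, and then induct. Concretely, I would fix $w = w_1 \cdots w_n \in W(C_n)$, let $(T, R)$ be the output of the insertion algorithm, and show that the reverse bumping algorithm applied to $(T, R)$ recovers $w$; the reverse implication then follows formally once we know both maps are well-defined maps between sets of the same (finite) cardinality, but it is cleaner to prove both directions of the step-reversal directly.

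\textbf{Key steps.} First I would establish the bookkeeping invariant: after the insertion algorithm has processed $w_1, \ldots, w_k$, the recording tableau $R$ has exactly the boxes labelled $1, \ldots, k$, and the box labelled $k$ sits at the position where the last box was \emph{added} to $T$ during step $k$ (i.e. the position vacated at the ``end'' of the bumping chain). This is exactly the position that the reverse bumping algorithm locates in step (2) when it searches $R$ for the number $k$. So the reverse algorithm correctly identifies the starting cell. Second — and this is the crux — I would prove the \emph{single bump reversal lemma}: if during forward insertion a number $s$ in row $m$ is displaced by being replaced, and $s$ then gets inserted into the lowest-numbered-row insertable position among rows $\{r \ge$ (something)$\}$... wait, I need to be careful with the direction. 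In forward insertion, when $s'$ is bumped it is placed in the \emph{highest}-numbered row among rows $\le m+1$; in reverse bumping, when $s$ is removed from row $m$ and reinserted, it goes to the \emph{smallest}-numbered row among rows $\ge m-1$. The lemma to prove is that these two operations are inverse: reinserting via ``smallest row $\ge m-1$'' undoes placing via ``highest row $\le m'+1$'' where $m'$ is the row $s$ lands in. One shows that if $s$ was bumped \emph{from} row $a$ \emph{into} row $b$ during insertion (so $b \le a+1$ and $b$ is maximal with an insertable position for $s$), then in reverse, removing $s$ from row $b$ makes row $a$'s relevant cell available, $a \ge b - 1$, and $a$ is the \emph{minimal} such row — this last point requires checking that no row strictly between $b-1$ and $a$ acquired an available position, which follows from the standardness/corner structure of the intermediate tableaux together with the precise shape-of-insertable-position constraints. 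I would isolate this as a lemma about how a single box-replacement changes the set of corners of the truncated tableau $T_s$.

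\textbf{Remaining pieces.} Third, I must check the base/boundary cases align: the $w_k = s \in \{1, \ldots, n\}$ case in insertion corresponds precisely to the reverse algorithm's step (3) with $m = 1$ (row $1$ is the first row of the left tableau, matching ``add to row $1$ of $T$''), and the $w_k = \bar s$ case where $s$ is placed in the first column on one side corresponds to the reverse algorithm reaching step (5) — ``no available positions in rows $\ge m-1$'' — which must be shown to be exactly the condition that forces the sign bar. This requires verifying that when insertion terminates a chain by adding a box in the leftmost/rightmost column (rather than being bumped further), the reverse chain, upon removing that box, finds no available position in the permitted rows; the geometric/combinatorial reason is the column constraint in the definition of insertable position versus the row constraint in available position, and I would match them against the structure of $\mathscr{R}$ (the $\mu_1, \nu_1, \mu_2, \nu_2, \ldots$ ordering). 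Finally, induct: a full reverse-bumping pass for index $k$ traces the forward insertion chain for $w_k$ backwards, recovering $w_k$ with the correct sign and restoring $(T_{k-1}, R_{k-1})$, so by downward induction on $k$ the whole word is recovered.

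\textbf{Main obstacle.} The delicate part is the single-bump reversal lemma — specifically proving the \emph{minimality} (resp. \emph{maximality}) of the target row is preserved under reversal. One has to argue that the ``gap'' between consecutive rows that can receive a bump is never skipped over, i.e. that the forward rule ``highest row $\le m+1$'' and the backward rule ``smallest row $\ge m-1$'' genuinely land on the same cell at every link of the chain, and that the chain lengths match. I expect this to require a careful case analysis on the four row-types ($\mu_i$ vs $\nu_i$, left vs right tableau) and on whether an insertable/available position lies inside the current shape or just outside it, using Definition of available/insertable position and the corner characterization. Everything else is bookkeeping by induction on $k$.
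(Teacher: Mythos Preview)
Your proposal is correct in approach, but there is nothing to compare it against: the paper's own proof consists of the single sentence ``This can be easily seen by a straightforward verification and is left as an exercise to the reader.'' You have in effect written out the exercise that the authors declined to do.

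Your decomposition into (i) the bookkeeping invariant for the recording tableau, (ii) a single-bump reversal lemma matching the ``highest row $\le m+1$'' forward rule against the ``smallest row $\ge m-1$'' backward rule, and (iii) the boundary cases for the sign (row $1$ versus no-available-position termination) is the natural structure, and your identification of the minimality/maximality matching as the point requiring genuine care is accurate. One small remark: you do not need the cardinality argument you mention as a fallback, since proving that each single step is undone by the corresponding single step in the other direction already gives both implications directly; the induction on $k$ suffices by itself.
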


\begin{proof}
This can be easily seen by a straightforward verification and is left as an exercise to the reader.\end{proof}

\begin{example}
Consider the element $w=275\bar{6}4\bar{3}1$. We construct a pair of bitableaux using the insertion algorithm. 

$$
\begin{array}{ccccccc}

\begin{tikzpicture}[scale=0.5]
\draw[-,line width=2pt] (0,2.5) to (0,0.5);

\node[left] at (0,2) {$2$};
\draw[-,line width=2pt] (0,-2.5) to (0,-0.5);

\node[right] at (-1,-1) {$1$};
\draw[->] (1,0) -- (2,0);
\node[above] at (1.5,0) {$7$};

\end{tikzpicture}
&

\begin{tikzpicture}[scale=0.5]
\draw[-,line width=2pt] (0,2.5) to (0,0.5);

\node[left] at (0,2) {$2$};
\node[left] at (-1,2) {$7$};

\draw[-,line width=2pt] (0,-2.5) to (0,-0.5);

\node[right] at (-1,-1) {$1$};
\node[right] at (-2,-1) {$2$};

\draw[->] (1,0) -- (2,0);
\node[above] at (1.5,0) {$5$};

\end{tikzpicture}

&

\begin{tikzpicture}[scale=0.5]
\draw[-,line width=2pt] (0,2.5) to (0,0.5);

\node[left] at (0,2) {$2$};
\node[left] at (-1,2) {$5$};
\node[left] at (1,2) {$7$};

\draw[-,line width=2pt] (0,-2.5) to (0,-0.5);

\node[right] at (-1,-1) {$1$};
\node[right] at (-2,-1) {$2$};
\node[left] at (1,-1) {$3$};
\draw[->] (1,0) -- (2,0);
\node[above] at (1.5,0) {$\bar{6}$};

\end{tikzpicture}
&

\begin{tikzpicture}[scale=0.5]
\draw[-,line width=2pt] (0,2.5) to (0,0.5);

\node[left] at (0,2) {$2$};
\node[left] at (-1,2) {$5$};
\node[left] at (1,2) {$7$};
\node[left] at (0,1) {$6$};

\draw[-,line width=2pt] (0,-2.5) to (0,-0.5);

\node[right] at (-1,-1) {$1$};
\node[right] at (-2,-1) {$2$};
\node[left] at (1,-1) {$3$};
\node[right] at (-1,-2) {$4$};

\draw[->] (1,0) -- (2,0);
\node[above] at (1.5,0) {$4$};

\end{tikzpicture}

&

\begin{tikzpicture}[scale=0.5]
\draw[-,line width=2pt] (0,2.5) to (0,0.5);

\node[left] at (0,2) {$2$};
\node[left] at (-1,2) {$4$};
\node[left] at (1,2) {$5$};
\node[left] at (0,1) {$6$};
\node[left] at (-1,1) {$7$};

\draw[-,line width=2pt] (0,-2.5) to (0,-0.5);

\node[right] at (-1,-1) {$1$};
\node[right] at (-2,-1) {$2$};
\node[left] at (1,-1) {$3$};
\node[right] at (-1,-2) {$4$};
\node[right] at (-2,-2) {$5$};

\draw[->] (1,0) -- (2,0);
\node[above] at (1.5,0) {$\bar{3}$};

\end{tikzpicture}

&

\begin{tikzpicture}[scale=0.5]
\draw[-,line width=2pt] (0,2.5) to (0,0.5);

\node[left] at (0,2) {$2$};
\node[left] at (-1,2) {$4$};
\node[left] at (1,2) {$5$};
\node[left] at (0,1) {$3$};
\node[left] at (-1,1) {$7$};
\node[left] at (1,1) {$6$};

\draw[-,line width=2pt] (0,-2.5) to (0,-0.5);

\node[right] at (-1,-1) {$1$};
\node[right] at (-2,-1) {$2$};
\node[left] at (1,-1) {$3$};
\node[right] at (-1,-2) {$4$};
\node[right] at (-2,-2) {$5$};
\node[left] at (1,-2) {$6$};

\draw[->] (1,0) -- (2,0);
\node[above] at (1.5,0) {$1$};

\end{tikzpicture}

&

\begin{tikzpicture}[scale=0.5]
\draw[-,line width=2pt] (0,2.5) to (0,0.5);

\node[left] at (0,2) {$1$};
\node[left] at (-1,2) {$4$};
\node[left] at (1,2) {$2$};
\node[left] at (0,1) {$3$};
\node[left] at (-1,1) {$5$};
\node[left] at (1,1) {$6$};
\node[left] at (2,2) {$7$};
\draw[-,line width=2pt] (0,-2.5) to (0,-0.5);

\node[right] at (-1,-1) {$1$};
\node[right] at (-2,-1) {$2$};
\node[left] at (1,-1) {$3$};
\node[right] at (-1,-2) {$4$};
\node[right] at (-2,-2) {$5$};
\node[left] at (1,-2) {$6$};
\node[left] at (2,-1) {$7$};

\end{tikzpicture}
\end{array}
$$
Therefore  $$w=275\bar{6}4\bar{3}1 \mapsto \left(\left(\young(41,53); \young(27,6) \right), \left(\young(21,43); \young(37,6) \right)\right).$$
The following diagram explains the last step of the algorithm,  where $1$ is inserted into the tableau $\left(\young(42,73); \young(5,6) \right)$.  

$$
\begin{array}{clllll}

\begin{tikzpicture}[scale=0.5]
\draw[-,line width=2pt] (0,2.5) to (0,0.5);

\node at (-3,4) {$1$};
\draw [->, color=blue] (-3,3.5) -- (-2,2.5);
\node[left] at (0,2) {$2$};
\node[left] at (-1,2) {$4$};
\node[left] at (1,2) {$5$};
\node[left] at (0,1) {$3$};
\node[left] at (-1,1) {$7$};
\node[left] at (1,1) {$6$};
\node[left] at (-2,1.5) {$\widetilde{T}_6=$};

\node at (2,1.5) {$\leadsto$};
\end{tikzpicture}

&
\begin{tikzpicture}[scale=0.5]
\draw[-,line width=2pt] (0,2.5) to (0,0.5);

\node at (-1.5,4) {$1$};
\draw [->, color=blue] (-1,3.5) -- (-0.5,2);

\node at (1,1.5) {$\leadsto$};

\end{tikzpicture}

&

\begin{tikzpicture}[scale=0.5]
\draw[-,line width=2pt] (0,2.5) to (0,0.5);

\node at (-2,4)(2) {$2$};

\node[left] at (0,2) {$1$};
\node at (1,2) (E){};
\draw (2) edge[ color=blue, ->, out=0, in=90] (E);

\node at (1,1.5) {$\leadsto$};
\end{tikzpicture}
&

\begin{tikzpicture}[scale=0.5]
\draw[-,line width=2pt] (0,2.5) to (0,0.5);

\node at (-3,4) (5) {$5$};
\node at (-1,1) (E) {};

\draw (5) edge[ color=blue, ->, out=270, in=180] (E);

\node[left] at (0,2) {$1$};
\node[left] at (-1,2) {$4$};
\node[left] at (1,2) {$2$};
\node[left] at (0,1) {$3$};
\node at (2,1.5) {$\leadsto$};

\end{tikzpicture} 

&

\begin{tikzpicture}[scale=0.5]
\draw[-,line width=2pt] (0,2.5) to (0,0.5);

\node at (-3,4) {$7$};
\node at (1,2) (E) {};
\draw (5) edge[ color=blue, ->, out=0, in=90] (E);

\node[left] at (0,2) {$1$};
\node[left] at (-1,2) {$4$};
\node[left] at (1,2) {$2$};
\node[left] at (0,1) {$3$};
\node[left] at (-1,1) {$5$};
\node[left] at (1,1) {$6$};
\node at (2,1.5) {$\leadsto$};

\end{tikzpicture} 

&

\begin{tikzpicture}[scale=0.5]
\draw[-,line width=2pt] (0,2.5) to (0,0.5);

\node[left] at (0,2) {$1$};
\node[left] at (-1,2) {$4$};
\node[left] at (1,2) {$2$};
\node[left] at (0,1) {$3$};
\node[left] at (-1,1) {$5$};
\node[left] at (1,1) {$6$};
\node[left] at (2,2) {$7$};
\end{tikzpicture}

\end{array}
 $$
Notice that here the numbers $2$ and $5$ both get bumped to the next row (from $m$ to $m+1$ in the notation of the algorithm) but the number $7$ does not have an insertable position in rows $m+1$ nor $m$, so it actually gets bumped `up' to row $m-1$, according to step (6) of the algorithm.
\end{example}

From \cite[Section 6]{NRS16} it was shown that the {\it exotic Steinberg variety} 
$$ 
\mf{Z}:=\widetilde{\fN}\times_{\fN}\widetilde{\fN}:=\{(F_\bullet,G_\bullet,(v,x))\in \cF(V)\times\cF(V)\times \fN~|~F_\bullet,G_\bullet\in\mathcal{C}_{(v,x)}\},
$$
has its irreducible components parametrised in two ways: one way by elements of the Weyl group $W(C_n)$ and the other by irreducible components of the Exotic Springer fibres, or in other words, by pairs of standard Young bitableaux. This gives rise to a bijection $$W(C_n) \longleftrightarrow  \coprod_{(\mu,\nu) \in \cQ_n}  \cT(\mu,\nu) \times \cT(\mu,\nu)$$  defined geometrically, as in \cite[Cor. 7.2]{NRS16}. The algorithms described in this section compute this geometric bijection as stated in the following theorem.

\begin{theorem}[Main Theorem] \label{theorem:main}
Let $F_\bullet, G_\bullet\in\cC^\circ_{(v,x)}$ be generic flags, with $\Phi(F_\bullet)=T$ and $\Phi(G_\bullet)=R$. Then $w(F_\bullet,G_\bullet)=\RS(T,R)$.
\end{theorem}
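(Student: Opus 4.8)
The plan is to prove the Main Theorem by induction on $n$, showing that the geometric bijection $w(F_\bullet, G_\bullet)$, read off one step at a time, matches the recursive structure of the reverse bumping algorithm $\RS(T,R)$. The key observation is that both constructions are fundamentally about reducing from dimension $n$ to dimension $n-1$: on the geometric side, one passes from the pair $(v,x)$ of exotic Jordan type $(\mu,\nu)$ to $(v + F_1, x|_{F_1^\perp / F_1})$ of exotic Jordan type $(\mu^{(n-1)}, \nu^{(n-1)})$ (the shape of $T_{n-1}$ and $R_{n-1}$), while on the combinatorial side one removes the box labelled $n$ from both $T$ and $R$ and performs one pass of the bumping chain. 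So the heart of the argument is: determine the last letter $w(n)$ of $w(F_\bullet, G_\bullet)$ geometrically, show it equals the value $w(n)$ produced by one pass of the reverse bumping algorithm, and show that the residual data $(F_\bullet', G_\bullet', (v + F_1, x|_{F_1^\perp/F_1}))$ has $\Phi$-images exactly $T_{n-1}$ and $R_{n-1}$, so that induction applies.

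First I would set up the reduction carefully using the results of Section~\ref{section:notation} and \cite{NRS16}. Given generic flags $F_\bullet, G_\bullet \in \cC^\circ_{(v,x)}$, the line $F_1$ is a generic point of some $\cB^{(\mu,\nu)}_{(\mu',\nu')}$, and likewise $G_1$; the bipartitions $(\mu',\nu')$ and $(\mu'',\nu'')$ reached are precisely the shapes of $T_{n-1}$ and $R_{n-1}$, since $\Phi(F_\bullet)_{n-1}$ records exactly that step. The letter $w(n)$ of the relative position $w = w(F_\bullet, G_\bullet)$ is, by Definition~\ref{relative}, controlled by how $G_n$ sits relative to $F_1$ and $F_n$: concretely $w(n)$ is unsigned precisely when $G_1 \subset F_n$ (in the appropriate normal-basis coordinates), and its sign and value are determined by whether $G_1 \subseteq F_1^\perp$, i.e.\ whether the generic spanning vectors of $F_1$ and $G_1$ are symplectically orthogonal. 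This is exactly the content computed in Proposition~\ref{proposition:perpendicular1} and Corollaries~\ref{corollary:notperp}, \ref{corollary:uniquegeneric}: the perpendicularity of two generic points of $\cB^{(\mu,\nu)}_{(\mu',\nu')}$ and $\cB^{(\mu,\nu)}_{(\mu'',\nu'')}$ (or, in the chained situation, of nested such lines) is governed by the combinatorial data $\lambda_m, \mu_m, \nu_m, \nu_{m-1}$ at the relevant row $m$ — and these are precisely the quantities that steps (3)--(6) of the reverse bumping algorithm inspect when deciding whether $w(k)$ is $s$, $\bar s$, or when it continues the bumping chain to a higher row. So the combinatorial algorithm's branching is an exact transcription of the geometric alternative "$G_1 \subseteq F_1^\perp$ or not", iterated as one climbs the rows.

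The main obstacle will be handling the \emph{chain}: the reverse bumping algorithm does not stop after looking at one row — when $m \neq 1$ and there is an available position in rows $\geq m-1$, it bumps a displaced number $s'$ and repeats. Geometrically this corresponds to iterating the "take the first subspace of the flag" reduction more than once before the value $w(n)$ is decided, and one must show that the sequence of bipartitions traversed by $\Phi(F_\bullet)$ restricted to successive truncations controls exactly which row the bumping chain lands in and with what sign. This requires a careful compatibility statement between the filtration $F_1 \subset F_2 \subset \cdots$ and the nested sequence of generic points $\cB^{(\mu^{(s)},\nu^{(s)})}_{(\mu^{(s-1)},\nu^{(s-1)})}$, using Section~\ref{section:generichyperplanes} to control restriction of exotic Jordan types at pairs of generic points, and then matching the "smallest numbered available row $\geq m-1$" rule against which step in the chain first produces a non-orthogonal pair. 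Once this compatibility is established — essentially, that the geometric reduction and the combinatorial bumping chain march in lockstep row by row — the inductive step closes: the residual flags $F_\bullet', G_\bullet'$ are generic in $\cC^\circ_{(v+F_1, x|)}$ with $\Phi$-images $T_{n-1}, R_{n-1}$, and $w(F_\bullet, G_\bullet)$ restricted to its first $n-1$ letters (in the embedding $W(C_{n-1}) \hookrightarrow W(C_n)$ induced by the flag reduction) equals $w(F_\bullet', G_\bullet') = \RS(T_{n-1}, R_{n-1}) = \RS(T,R)$ on those letters, while the last letter agrees by the analysis above.
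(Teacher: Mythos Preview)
Your inductive structure is wrong in a way that prevents the argument from closing. You reduce by $F_1$ and claim the residual $\Phi$-images are $T_{n-1}$ and $R_{n-1}$, so that induction gives $\RS(T_{n-1},R_{n-1})$ on the first $n-1$ letters. But the reverse bumping algorithm does \emph{not} produce $(T_{n-1},R_{n-1})$ after one pass: it removes $n$ from the recording tableau $R$ (giving $R_{n-1}$) and then performs a bumping chain in $T$ that outputs $w(n)$ and leaves behind a modified tableau $\widetilde{T}$ which is in general different from $T_{n-1}$ (look at the worked example in Section~\ref{section:bumpingalgorithm}). Correspondingly, the geometric reduction must be by $G_1$, not $F_1$: one passes to $G_1^{\perp}/G_1$, where $G_\bullet$ becomes $\widetilde{G}_\bullet$ with $\Phi$-image $R_{n-1}$, while $F_\bullet$ becomes a flag $\widetilde{F}_\bullet$ built from the subquotients $(G_1 + F_i\cap G_1^{\perp})/G_1$, with $\Phi$-image $\widetilde{T}$. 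Showing that $\widetilde{T}$ is exactly the output of one bumping pass on $T$ is the real content (this is where Theorem~\ref{theorem:etypeY} is applied, with $W=(F_{n-s}+G_1)/F_{n-s}$ and $X=F_{n-s+1}/F_{n-s}$ at each level $s$ of the chain), and the inductive call is $\RS(\widetilde{T},R_{n-1})$, not $\RS(T_{n-1},R_{n-1})$. Your plan as written never confronts the tableau $\widetilde{T}$.

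There is a second gap you wave past: even with the correct reduction, the flag $\widetilde{F}_\bullet$ need not be generic in its component of $\cC_{(\bar v,\bar x)}$, so the inductive hypothesis does not apply directly. The paper handles this with a closure/Bruhat argument: one shows only the inequality $w(F_\bullet,G_\bullet)\leq \RS(T,R)$ at each step, and then upgrades to equality by the global fact that both sides define bijections $W(C_n)\to\coprod\cT(\mu,\nu)^2$ (backwards induction on Bruhat length, starting from $w_0$). Your assertion that the residual flags ``are generic'' is exactly the point that fails and requires this extra device.
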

The proof of the theorem will be given in Section \ref{section:algorithmconstruction}.

\section{Intersecting Generic Hyperplanes} \label{section:generichyperplanes}
Through out this section $(\mu',\nu')$ will be a bipartition obtained from $(\mu,\nu)$ by decreasing either $\mu_m$ or $\nu_m$ by $1$ and $W$ and $X$ will be two generic points in $\cB_{(\mu',\nu')}^{(\mu,\nu)}$. We will further assume that the partitions $(\mu,\nu)$ and $(\mu',\nu')$ are such that the generic points $X$ and $W$ automatically satisfy $X \subset W^{\perp}$. 

In this section, we describe how $\eType(v+Y, x_{|Y^{\perp}/Y})$ is obtained from $\eType(v+W, x_{|W^{\perp}/W})=(\mu',\nu')$ in all cases; see the below Theorem. This is analogous to Lemma 3.2 in \cite{Ste}, and is the key step that allows us to describe the steps in the reverse bumping algorithm; it will be used in Lemma \ref{lemma:rs} below.

\begin{theorem} \label{theorem:etypeY}
Suppose $W$ and $X$ are two generic points in $\cB_{(\mu',\nu')}^{(\mu,\nu)}$ where $(\mu',\nu')$ is obtained from $(\mu,\nu)$ by decreasing one part of $\mu$ or $\nu$ by $1$ and that $(\mu,\nu)$ is a bipartition such that $X \subset W^{\perp}$. Put $Y=X+W$. 
\begin{enumerate}
\item Suppose that $\nu'=\nu$ and $\mu_m'=\mu_m-1$. If $m=1$, then $X=W$. Otherwise, $\eType(v+Y,x_{|Y^{\perp}/Y})$ is obtained from $(\mu',\nu')$ by decreasing:
\begin{enumerate}[(a)]
\item $\mu_m'$ by 1  if $\mu_m-1>\mu_{m+1}$ and either $\nu_{m-1}=\nu_{m}\neq 0$, or $\nu_{m-1}=0$; or
\item $\nu_{\max \Delta_m}$ by 1 if $\mu_m-1=\mu_{m+1}$, $\nu_{m-1}=\nu_{m}\neq 0$ and either $\max \Gamma_{m+1} > \max \Delta_m$, or $\mu_m=1$ (and so $\mu_{m+1}=0$); or
\item $\mu_{\max \Gamma_{m+1}}$ by 1 if  $\mu_m-1=\mu_{m+1} \neq 0$ and either $\nu_{m-1}=\nu_{m}\neq 0$ and $\max \Gamma_{m+1} \leq \max \Delta_m$, or $\nu_{m-1}=0$; or
\item $\nu_{m-1}$ by 1 if $\nu_{m-1}>\nu_{m}$. 
\end{enumerate}

\item Suppose that $\mu' =\mu$ and $\nu_m'=\nu_m-1$. Then $\eType(v+Y,x_{|Y^{\perp}/Y})$ is obtained from $(\mu',\nu')$ by decreasing:
\begin{enumerate}[(a)]
\item $\mu_m$ by 1 if $\max \Gamma_m=\max \Delta_m$; or
\item $\nu_m'$ by $1$ if $\nu_m-1>\nu_{m+1}$ and either $\max \Gamma_m > \max \Delta_m$, or $\mu_m=0$; or
\item $\nu_{\max \Delta_{m+1}}$ by 1 if $\nu_m-1=\nu_{m+1}\neq 0$ and either $\max \Gamma_m > \max \Delta_{m+1}$, or $\mu_m=0$; or
\item $\mu_{\max \Gamma_{m}}$ by $1$ if $\nu_m-1=\nu_{m+1}$ and either $\max \Gamma_{m} \leq \max \Delta_{m+1}$, or $\nu_{m}=1$  (and so $\nu_{m+1}=0$). 

\end{enumerate}
\end{enumerate}
\end{theorem}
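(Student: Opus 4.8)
The plan is to reduce the computation of $\eType(v+Y, x_{|Y^\perp/Y})$, where $Y = X+W$ is two-dimensional, to a concrete linear-algebra problem on the normal basis of Theorem~1.10, and then to read off the answer by applying the Travkin / Achar--Henderson criterion (stated in the last Remark before Proposition~\ref{proposition:genericw}) that characterizes exotic Jordan type via the two ordinary Jordan types $\Type(x, Y^\perp/Y)$ and $\Type(x, (Y^\perp/Y)/\C[x](v+Y))$. Since $X \subset W^\perp$ by hypothesis, $Y$ is an isotropic $2$-plane, so $Y^\perp/Y$ is again a symplectic space of dimension $2n-4$, and $x$ descends to a nilpotent endomorphism of it; the point $v+Y$ lands in the image of $F_n$ for generic flags, so $(v+Y, x_{|Y^\perp/Y})$ is a genuine point of the smaller exotic nilpotent cone. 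The key input is Proposition~\ref{proposition:genericw}, which tells us that generic $W = \C w$ and $X = \C\mathtt{x}$ have the explicit form $w = \sum_{i=1}^m \alpha_i v_{i,1} + \beta_i v_{i,\lambda_i}^*$ and $\mathtt{x} = \sum_{i=1}^m \gamma_i v_{i,1} + \delta_i v_{i,\lambda_i}^*$ (equation~\eqref{eqn:genericwx}), with the relevant vanishing/nonvanishing conditions on $\sum_{i \in \Delta_{\leq m}}\beta_i$ and $\sum_{i\in\Delta_{\leq m}}\delta_i$ depending on whether we are in the $\mu$-decreasing or $\nu$-decreasing case.

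First I would fix a normal basis for $(v, x)$ adapted to $(\mu', \nu')$-structure one level down — equivalently, track how the normal basis for $(v, x)$ of exotic type $(\mu,\nu)$ induces, after quotienting by $W$, a normal basis witnessing $\eType(v+W, x_{|W^\perp/W}) = (\mu',\nu')$; this is essentially the content of \cite[Prop.~4.18]{NRS16} used in the proof of Proposition~\ref{proposition:genericw}. Then I would compute the two ordinary Jordan types after further quotienting by (the image of) $X$: concretely, one must determine, for each relevant power $r$, the dimension of $\ker(x^r)$ acting on $Y^\perp/Y$, and likewise on $(Y^\perp/Y)/\C[x](v+Y)$. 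Because $Y$ is spanned by two vectors each supported on the ``ends'' of the Jordan strings indexed by $i \in \{1,\dots,m\}$, passing to $Y^\perp/Y$ shortens or merges precisely the strings of length $\lambda_m$ (and possibly $\lambda_{m\pm1}$ when there are ties), and the bookkeeping of which string gets shortened is governed by which of the linear combinations $\sum \beta_i$, $\sum \delta_i$, $\sum(\alpha_i\delta_i - \beta_i\gamma_i)$ (from \eqref{eqn:xperpw}) vanish. Each of the eight sub-cases (1a)--(1d), (2a)--(2d) corresponds to one constellation of equalities among $\mu_m, \mu_{m+1}, \nu_{m-1}, \nu_m, \nu_{m+1}$ and comparisons among $\max\Gamma_\bullet, \max\Delta_\bullet$; I would organize the proof as a case analysis along exactly these lines, in each case exhibiting the shortened string explicitly and invoking the Travkin criterion to certify the resulting bipartition.

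The main obstacle I anticipate is the genericity argument: one must show that for $W, X$ ranging over a dense open subset of $\cB_{(\mu',\nu')}^{(\mu,\nu)} \times \cB_{(\mu',\nu')}^{(\mu,\nu)}$, the ranks of all the relevant powers $x^r|_{Y^\perp/Y}$ (and on the $\C[x](v+Y)$-quotient) attain their \emph{maximal} values simultaneously, so that $\eType$ is constant on that open set and equal to the ``expected'' bipartition. Concretely this means showing that the various determinantal/bilinear expressions in the $\alpha_i, \beta_i, \gamma_i, \delta_i$ that control these ranks are not identically zero on the parameter space — but one has to be careful, because the constraints from Proposition~\ref{proposition:genericw} (e.g. $\sum_{i\in\Delta_{\leq m}}\beta_i = 0$ in the $\mu$-decreasing case, and $\beta_m = 0$ when $\nu_{m-1} > \nu_m$ by Remark~\ref{corollary:genericw}) cut down the parameter space, and one must verify the expressions remain generically nonzero \emph{subject to those constraints}. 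I expect this to be the technical heart; once the ranks are pinned down, converting them back to bipartitions via the Travkin formulas is mechanical. A secondary subtlety is the boundary case $m=1$ in part (1), where Corollary~\ref{corollary:uniquegeneric} forces $X = W$ (so $Y = W$ is only one-dimensional) and the statement degenerates — this should be dispatched at the outset by citing that corollary.
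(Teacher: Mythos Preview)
Your proposal is essentially correct and follows the same architecture as the paper's proof: reduce to the Travkin/Achar--Henderson criterion, pass from $(\mu,\nu)$ to $(\mu',\nu')$ by quotienting by $W$ first, then determine the further change upon quotienting by $X$, organize by a case analysis governed by the equalities among $\mu_m,\mu_{m+1},\nu_{m-1},\nu_m,\nu_{m+1}$ and the $\max\Gamma_\bullet,\max\Delta_\bullet$ comparisons, and verify the genericity of the relevant bilinear expressions in $\alpha_i,\beta_i,\gamma_i,\delta_i$ subject to the constraints from Proposition~\ref{proposition:genericw}. The paper sharpens your plan in one respect: rather than computing $\dim\ker(x^r)$ for all $r$, it isolates two specific integers $k_2$ and $l_2$ via Lemmas~\ref{lemma:maxk2} and~\ref{lemma:maxl2} (the maximal $k$ with $X\subseteq x^{k-1}(W^\perp)+\C[x]v+W$, and the maximal $l$ with $X^\perp\supseteq (x^{l-1})^{-1}(\C[x]v+X+W)\cap W^\perp$), which pin down exactly which two consecutive parts of $\sigma=\Type(x,W^\perp/(\C[x]v+W))$ get decreased; this makes the case analysis (Propositions~\ref{proposition:maxk2prim}--\ref{proposition:k2d0}) more streamlined than a full rank computation, and also shows that only $\sigma'=\Type(x,Y^\perp/(\C[x]v+Y))$ needs to be computed, not $\Type(x,Y^\perp/Y)$ separately.
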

\begin{remark}
In parts (1)(c) and (2)(c) of Theorem \ref{theorem:etypeY} we have stated that $\mu_m>1$ and $\nu_m>1$ for completeness even though it was implicitly assumed by requiring that $X \subseteq W^{\perp}$ for which the conditions of Proposition \ref{proposition:perpendicular1} apply.
\end{remark}

\begin{remark}\label{remark:43}
See the proof of Lemma \ref{lemma:rs} below for details on how Theorem \ref{theorem:etypeY} relates to the exotic Robinson-Schensted algorithm. Here we indicate how Theorem \ref{theorem:etypeY} corresponds to the reverse bumping algorithm of Section \ref{section:bumpingalgorithm}.

In Case 1, the number $s$ to be removed is in the $(m,1)$-th position. 
\begin{itemize}
\item In (a), $\mu_m - 1 > \mu_{m+1}$ and either $\nu_{m-1} = \nu_m \neq 0$, or $\nu_{m-1}=0$. Here the first ``available position'' is the box adjacent to $s$ (since $\nu_{m-1} = \nu_m$ or $\nu_{m-1}=0$, there are no such positions in the preceding row). Replacing that label with $s$ corresponds to decreasing $\mu'_m$ by $1$. 
\item In (b), $\mu_m - 1 = \mu_{m+1}$, $\nu_{m-1}=\nu_m$. If either $\text{max}(\Gamma_{m+1}) > \text{max}(\Delta_m)$ or $\mu_m=1$, then the first ``available position'' is at the end of row $\text{max}(\Delta_m)$. Replacing that label with $s$ corresponds to decreasing $\nu_{\text{max}(\Delta_m)}$ by $1$.  
\item In (c), $\mu_m -1 = \mu_{m+1}$, $\nu_{m-1} = \nu_m$ and either $\text{max}(\Gamma_{m+1}) \leq \text{max}(\Delta_m)$ or $\nu_{m-1}=0$. It follows first ``available position'' is at the end of row $\text{max}(\Gamma_{m+1})$. Replacing that label with $s$ corresponds to decreasing $\mu_{\text{max}(\Delta_m)}$ by $1$. 
\item In (d), $\nu_{m-1} > \nu_m$, and there is an ``available position'' in the first possible row: namely, the row corresponding to $\nu_{m-1}$. Replacing that label with $s$ corresponds to decreasing $\nu_{m-1}$ by $1$. 
\vspace{5pt}
One can similarly verify that Case 2 (a), (b), (c), (d) match up with the bumping algorithm.

\end{itemize}
\end{remark}

\subsection{Preliminaries.}For the rest of this section, we will assume that $W\neq X$. By the Travkin and Achar-Henderson criterion, to calculate  $\eType(v+Y,x_{|Y^{\perp}/Y})$, we need to know the following Jordan types: $\Type(x,Y^{\perp}/Y)$ and $\Type(x,Y^{\perp}/(\C[x]v+Y))$. Since $X \subset W^{\perp}$, we have $Y=X+W \subset X^{\perp} \cap W^{\perp}=Y^{\perp}$ and so we may regard $Y/W$ as a $1$-dimensional subspace of $\ker(x_{|W^{\perp}/W}) \cap (\C[x]v+W)^{\perp}/W \subset W^{\perp}/W$. The following two lemmas will be useful in this regard. 
 
 \begin{lemma} \label{lemma:maxk2}
 Let $\sigma$ be the Jordan type of $x$ restricted to the space $W^{\perp}/(\C[x]v+W)$. Then the Jordan type $\sigma'$ of the induced nilpotent $x$ on $W^{\perp}/(\C[x]v+X+W)$ is determined by the maximal $k$ such that $$ X \subseteq x^{k-1}(W^{\perp}) + \C[x]v + W. $$ If $X\subset\C[x]v+W$, then $\sigma'=\sigma$ (and there is no maximal $k$), otherwise we have that $\sigma'$ is obtained by removing the last box at the bottom of the $k$-th column of $\sigma$. 
 \end{lemma}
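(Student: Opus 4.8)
The plan is to reduce the computation of the Jordan type $\sigma'$ of $x$ on $W^\perp/(\C[x]v + X + W)$ to a statement about the filtration of $W^\perp/(\C[x]v+W)$ by images of powers of $x$. First I would observe that there is a short exact sequence
\[
0 \longrightarrow \frac{X + \C[x]v + W}{\C[x]v + W} \longrightarrow \frac{W^\perp}{\C[x]v+W} \longrightarrow \frac{W^\perp}{\C[x]v+X+W} \longrightarrow 0,
\]
and that the left-hand term is at most one-dimensional (it is $0$ precisely when $X \subseteq \C[x]v+W$). So quotienting by it removes at most one box from the Young diagram of $\sigma$, and the whole question is \emph{which} box gets removed. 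This is a standard fact about nilpotent endomorphisms: if $N$ is nilpotent on a vector space $M$ and $\bar u$ is a nonzero element of $\ker N$, then the Jordan type of $N$ on $M/\C\bar u$ is obtained from that of $N$ on $M$ by deleting the bottom box of the $k$-th column, where $k$ is the largest integer such that $\bar u \in \operatorname{im}(N^{k-1})$ (equivalently, $\bar u$ lies in the $k$-th piece of the socle filtration but one needs to be careful to use the image filtration, since $\bar u \in \ker N$ already). I would state and prove this as a short standalone linear-algebra lemma — choose a Jordan basis of $M$ adapted to $N$, write $\bar u$ in terms of the bottom-of-column basis vectors weighted by whether the column has length $\geq k$, and check that passing to the quotient by $\C\bar u$ is (after a change of Jordan basis) exactly deletion of one bottom box from a longest column among those of length $\geq k$; but since all columns of length $\geq k$ that contribute have the \emph{same} set of bottom boxes available, and removing the bottom box of the $k$-th column (counting columns in weakly decreasing order of length) is the combinatorially correct description, the result follows.

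Next I would identify $\bar u$, the image of (a generator of) $X$ in $M := W^\perp/(\C[x]v+W)$. Note $\bar u$ is automatically in $\ker(x|_M)$ because $X \subseteq \ker(x) \cap (\C[x]v)^\perp$ by the definition of $\cB^{(\mu,\nu)}_{(\mu',\nu')}$ and $X \subseteq W^\perp$ by hypothesis, so $xX = 0$ lands in $\C[x]v + W$ trivially. Then the integer $k$ from the general lemma is the largest $k$ with $\bar u \in \operatorname{im}((x|_M)^{k-1})$. Unwinding the quotient, $\operatorname{im}((x|_M)^{k-1}) = \big(x^{k-1}(W^\perp) + \C[x]v + W\big)/(\C[x]v+W)$, so $\bar u \in \operatorname{im}((x|_M)^{k-1})$ is exactly the condition $X \subseteq x^{k-1}(W^\perp) + \C[x]v + W$. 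This is precisely the maximal-$k$ condition in the statement, and the conclusion "$\sigma'$ is obtained from $\sigma$ by removing the last box at the bottom of the $k$-th column" is then just the general lemma transported back. The case $X \subseteq \C[x]v+W$ gives $\bar u = 0$, hence $M/\C\bar u = M$ and $\sigma' = \sigma$, with no maximal $k$ (every power works), matching the stated exception.

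The main obstacle, I expect, is making the general linear-algebra lemma precise in the case where several columns of $\sigma$ have the same length $\geq k$: one must be careful that "the $k$-th column" is unambiguous and that the quotient really does remove a box from it rather than from some other equally-long column — this is a genuine (if standard) subtlety about how the Jordan type of a quotient by a single socle vector depends on the vector. I would handle this by ordering columns so that among columns of a fixed length the labelling is fixed, noting that the resulting partition $\sigma'$ does not depend on which of the equally-long columns one formally removes the box from (the multiset of column lengths is the same), so "remove the bottom box of the $k$-th column" is well-defined as an operation on partitions. A secondary, more bookkeeping-type point is to confirm that $\bar u \in \ker(x|_M)$ so that the socle-vector lemma applies at all; this is immediate from $X \subseteq \ker x$, but worth stating explicitly since the whole argument hinges on it. Everything else is a direct translation through the short exact sequence above.
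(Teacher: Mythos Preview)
Your proposal is correct and follows essentially the same approach as the paper: both identify $W^{\perp}/(\C[x]v+X+W)$ as the quotient of $W^{\perp}/(\C[x]v+W)$ by the (at most) one-dimensional image of $X$, apply the standard fact about how the Jordan type changes under such a quotient, and then unwind the image condition $\bar u \in \operatorname{im}((x|_M)^{k-1})$ to the stated containment $X \subseteq x^{k-1}(W^{\perp}) + \C[x]v + W$. The only difference is that the paper simply cites Spaltenstein \cite{Spa} for the linear-algebra lemma you propose to prove from scratch, and omits your discussion of equal-length columns (which, as you note, is harmless since the resulting partition is well-defined).
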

 \begin{proof}
 Since $W^{\perp}/(\C[x]v+X+W)$ is the quotient of the space $W^{\perp}/(\C[x]v+W)$ by the one-dimensional subspace $(\C[x]v+X+W)/(\C[x]v+W)$, we know by \cite[page 1]{Spa} that $\Type(x,W^{\perp}/(\C[x]v+X+W))$ is given by the maximal $k$ such that $$ (\C[x]v+X+W)/(\C[x]v+W) \subseteq x^{k-1}(W^{\perp}/(\C[x]v+W)).$$ Translating this back to a condition on $X$, we find that 
\begin{eqnarray*}
\C[x]v+X+W 	&\subseteq&	 x^{k-1}(W^{\perp}) + \C[x]v+W  \quad \text{and so} \\ 
X   			&\subseteq&	 x^{k-1}(W^{\perp}) + \C[x]v+W 
\end{eqnarray*} 
as required. \end{proof} 
\begin{lemma} \label{lemma:maxl2}
Let $\rho$ be the Jordan type of $x$ restricted to the space $W^{\perp}/(\C[x]v+X+W)$. Then the Jordan type $\rho'$ of the induced nilpotent $x$ on $Y^{\perp}/(\C[x]v+X+W)=Y^{\perp}/(\C[x]v+Y)$ is determined by the maximal $l$ such that 
$$
X^{\perp} \supseteq (x^{l-1})^{-1}(\C[x]v+W+X)\cap W^{\perp}.
$$  
Then $\rho'$ is obtained by deleting the last box of the $l$-th column of $\rho$
 \end{lemma}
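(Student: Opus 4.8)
\textbf{Proof plan for Lemma \ref{lemma:maxl2}.}

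The plan is to mimic exactly the strategy already used in the proof of Lemma \ref{lemma:maxk2}, but now passing to the perpendicular space rather than taking a further quotient. The key observation is that $Y^{\perp}/(\C[x]v+Y)$ sits inside $W^{\perp}/(\C[x]v+X+W)$ as a codimension-one subspace: indeed $Y^{\perp}=W^{\perp}\cap X^{\perp}\subseteq W^{\perp}$, and since $X\subseteq W^{\perp}$ (our running hypothesis) we have $\C[x]v+Y=\C[x]v+X+W\subseteq W^{\perp}$, so both spaces have the same denominator and the numerator $Y^{\perp}$ is a hyperplane in $W^{\perp}$. Thus $\rho'$ is obtained from $\rho$ by the Spaltenstein recipe for a \emph{subspace} of codimension one, and the relevant invariant is the maximal $l$ such that the subspace $Y^{\perp}/(\C[x]v+X+W)$ is contained in $\ker(x^{l-1})$ acting on $W^{\perp}/(\C[x]v+X+W)$ — equivalently, such that $x^{l-1}$ kills $Y^{\perp}$ modulo $\C[x]v+X+W$. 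Deleting the last box of the $l$-th column of $\rho$ is precisely the effect of quotienting/restricting by a generic hyperplane in the language of \cite[page 1]{Spa}, applied on the dual (subspace) side.

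First I would set up the notation: let $N=W^{\perp}/(\C[x]v+X+W)$ with induced nilpotent $\bar x$ of type $\rho$, and let $N'=Y^{\perp}/(\C[x]v+X+W)\subseteq N$, a hyperplane. By the subspace version of Spaltenstein's lemma, $\Type(\bar x, N')$ — which equals $\rho'$ since the denominators agree — is governed by the largest $l$ with $N'\subseteq \ker(\bar x^{\,l-1})\subseteq N$, and in that case $\rho'$ is $\rho$ with the bottom box of column $l$ removed. Next I would translate the condition $N'\subseteq\ker(\bar x^{\,l-1})$ back to a statement about $Y^{\perp}$ inside $W^{\perp}$: it says $x^{l-1}(Y^{\perp})\subseteq \C[x]v+X+W$, i.e. $Y^{\perp}\subseteq (x^{l-1})^{-1}(\C[x]v+X+W)$. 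Intersecting with $W^{\perp}$ (harmless, since $Y^{\perp}\subseteq W^{\perp}$ already) and then applying $\perp$ turns the inclusion $Y^{\perp}\subseteq (x^{l-1})^{-1}(\C[x]v+X+W)\cap W^{\perp}$ into the stated inclusion $X^{\perp}\supseteq (x^{l-1})^{-1}(\C[x]v+W+X)\cap W^{\perp}$, using that $Y=X+W$ so $Y^{\perp}=X^{\perp}\cap W^{\perp}$ and that the right-hand side already lies in $W^{\perp}$. This is where some care with the symplectic form is needed: one must check that $(x^{l-1})^{-1}(\C[x]v+X+W)\cap W^{\perp}$ is a subspace containing $\C[x]v+X+W$ (clear, since $x$ is nilpotent so $x^{l-1}$ sends that subspace into itself, and it lies in $W^{\perp}$), and that taking perpendiculars within the symplectic space $V$ reverses the inclusion correctly; the fact that $x\in\cS$ means $\langle x^{k}a,b\rangle=\langle a,x^{k}b\rangle$, which is what makes $(x^{k})^{-1}$ and $\perp$ interact cleanly.

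The main obstacle I anticipate is purely bookkeeping: verifying that the ``maximal $l$'' appearing on the subspace side of Spaltenstein matches the ``delete the last box of the $l$-th column'' prescription with the same indexing convention used elsewhere in \cite{NRS16} and in Lemma \ref{lemma:maxk2}, since quotient-type and subspace-type deletions are dual operations and it is easy to be off by transposition or to confuse rows with columns. I would resolve this by fixing once and for all the convention (already implicit in Lemma \ref{lemma:maxk2}) that passing to a generic one-dimensional quotient removes the bottom box of the column indexed by the relevant maximal exponent, and then noting that passing to a generic hyperplane \emph{subspace} has the identical combinatorial effect on the Jordan type — both correspond to decreasing the multiplicity of the part of size $l$ by deleting one box — so no transposition enters. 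Once the conventions are pinned down, the proof is a two-line translation exactly parallel to Lemma \ref{lemma:maxk2}.
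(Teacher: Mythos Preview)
Your overall plan is right and matches the paper's: view $N'=Y^{\perp}/(\C[x]v+X+W)$ as an $x$-stable hyperplane in $N=W^{\perp}/(\C[x]v+X+W)$ and apply the subspace analogue of Spaltenstein's lemma. But you have the inclusion in that analogue reversed, and this is a genuine gap, not just bookkeeping.

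For an $x$-stable hyperplane $N'\subseteq N$, the correct criterion (this is \cite[Lemma~1.4]{leeuwen2}, which the paper invokes) is that $\rho'$ is obtained from $\rho$ by deleting the bottom box of the $l$-th column where $l$ is maximal with
\[
N'\ \supseteq\ \im(\bar x)+\ker(\bar x^{\,l-1}),
\]
not $N'\subseteq\ker(\bar x^{\,l-1})$. A hyperplane cannot be contained in $\ker(\bar x^{\,l-1})$ unless $\bar x^{\,l-1}$ has rank at most one, so your condition does not even depend on $N'$ in any meaningful way. Consequently your translation $x^{l-1}(Y^{\perp})\subseteq \C[x]v+X+W$ is the wrong statement, and the subsequent ``apply $\perp$'' step does not convert it into the lemma's conclusion: taking perpendiculars of $Y^{\perp}\subseteq B$ gives $Y\supseteq B^{\perp}$, which is unrelated to $X^{\perp}\supseteq B\cap W^{\perp}$.

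With the correct direction the argument runs as in the paper: $N'\supseteq \im(\bar x)+\ker(\bar x^{\,l-1})$ lifts to
\[
Y^{\perp}\ \supseteq\ x(W^{\perp})\;+\;(x^{l-1})^{-1}(\C[x]v+X+W)\cap W^{\perp}\;+\;\C[x]v+X+W.
\]
Now one checks $x(W^{\perp})\subseteq X^{\perp}$ (since $X\subseteq\ker x$ and $x\in\cS$) and $\C[x]v\subseteq Y^{\perp}$, so these terms are automatic and the condition reduces to $Y^{\perp}\supseteq (x^{l-1})^{-1}(\C[x]v+X+W)\cap W^{\perp}$. Finally, since the right-hand side already lies in $W^{\perp}$ and $Y^{\perp}=X^{\perp}\cap W^{\perp}$, this is equivalent to $X^{\perp}\supseteq (x^{l-1})^{-1}(\C[x]v+X+W)\cap W^{\perp}$, as stated.
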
 

\begin{proof}
Since $Y^{\perp}/(\C[x]v+Y)$ is an $x$-stable hyperplane in $W^{\perp}/(\C[x]v+Y)$, we know by \cite[Lemma 1.4]{leeuwen2} that $\Type(x, Y^{\perp}/(\C[x]v+Y)$ is given by the maximal $l$ such that 
$$
Y^{\perp}/(\C[x]v+Y) \supseteq \im(x_{|W^{\perp}/(\C[x]v+X+W)})+\ker(x_{|W^{\perp}/(\C[x]v+X+W)}^{l-1}).
$$
Now 
$$ \im(x_{|W^{\perp}/(\C[x]v+X+W)})=\im(x_{|W^{\perp}})+\C[x]v+X+W=x(W^{\perp})+\C[x]v+X+W,$$ 
and 
$$\ker(x_{|W^{\perp}/(\C[x]v+X+W)}^{l-1})= (x^{l-1})^{-1}(\C[x]v+X+W)\cap W^{\perp}.$$
Thus we require the maximal $l$ such that 
\begin{equation}\label{eq:maxl}
Y^{\perp} \supseteq x(W^{\perp})+ (x^{l-1})^{-1}(\C[x]v+X+W)\cap W^{\perp} + \C[x]v+X+W.
\end{equation}
Now since $X \subset W^{\perp} \cap \ker(x)$, we have $X \subset \ker(x_{|W^{\perp}})$ and so $X^{\perp} \supseteq x(W^{\perp})$. Also, since $Y=X+W \subseteq (\C[x]v)^{\perp}$, we have that $Y^\perp\supseteq \C[x]v$, so \eqref{eq:maxl} simplifies to the maximal $l$ such that 
$$
Y^{\perp} \supseteq (x^{l-1})^{-1}(\C[x]v+X+W)\cap W^{\perp}.
$$
Translating this back to a condition on $X$ and $W$, we require the maximal $l$ such that 
$$
X^{\perp} \cap W^{\perp} \supseteq  (x^{l-1})^{-1}(\C[x]v+W+X)\cap W^{\perp}
$$ 
Now the condition that $W^{\perp} \supseteq  (x^{l-1})^{-1}(\C[x]v+W+X) \cap W^{\perp}$ is redundant and so we require 
$$ X^{\perp} \supseteq  (x^{l-1})^{-1}(\C[x]v+W+X)\cap W^{\perp}$$ and the proof is complete.\end{proof}


Now we know that $\Type(x,Y^{\perp}/(\C[x]v+Y))$ is obtained from $\Type(x,W^{\perp}/(\C[x]v+W))$ by decreasing two parts, and we will see below that these are two consecutive parts, which completely determines $\eType(v+Y,x_{|Y^{\perp}/Y})$. \vspace{5pt}


For what follows, we will always have $W=\C\{w\}$ and $X=\C\mf\{\mathtt{x}\}$ where
$$
w=\sum_{i=1}^{m} \alpha_{i}v_{i,1}+\beta_{i}v_{i,\lambda_i}^{*} \quad \text{and} \quad \mathtt{x}=\sum_{i=1}^{m} \gamma_{i}v_{i,1}+\delta_{i}v_{i,\lambda_i}^{*},
$$ 
and $m$ is the index where either $\mu$ or $\nu$ is decreased by $1$ to obtain $\eType(v+W, x_{|W^{\perp}/W})=\eType(v+X,x_{|X^{\perp}/X})$. 

\begin{proposition} \label{proposition:maxk2prim}
The maximal $k_2$ such that $ X \subseteq x^{k_2-1}(W^{\perp})+W,$ is 
$$k_2=
\begin{cases} 
\lambda_{m-1} 	&\text{if} \quad \mu' \preceq \mu \, \, \text{and} \, \, \nu_{m-1} > \nu_m, \\ 
\lambda_m-1 	&\text{otherwise.}
\end{cases}
$$
\end{proposition}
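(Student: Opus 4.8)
The statement asks for the maximal $k_2$ with $X \subseteq x^{k_2-1}(W^{\perp}) + W$; note this is exactly the quantity appearing in Lemma \ref{lemma:maxk2} applied with $\C[x]v$ suppressed, i.e.\ we are looking at the chain of subspaces $x^{k-1}(W^{\perp}) + W$ inside $V$. The plan is to compute directly, using the normal basis $\{v_{ij}, v_{ij}^{*}\}$ for the exotic Jordan type $(\mu,\nu)$ of $(v,x)$, and the explicit expressions
\[
w=\sum_{i=1}^{m} \alpha_{i}v_{i,1}+\beta_{i}v_{i,\lambda_i}^{*}, \qquad \mathtt{x}=\sum_{i=1}^{m} \gamma_{i}v_{i,1}+\delta_{i}v_{i,\lambda_i}^{*},
\]
where by Proposition \ref{proposition:genericw} these lie in $\ker(x)\cap(\C[x]v)^{\perp}$, and the coefficients are generic subject to the linear constraints recorded there (namely $\sum_{i\in\Delta_{\leq m}}\beta_i = 0$ or $\neq 0$ according to whether $\mu_m$ or $\nu_m$ was decreased, and similarly $\beta_m=\delta_m=0$ when $\mu'_m=\mu_m-1$ and $\nu_{m-1}>\nu_m$ by Remark \ref{corollary:genericw}).

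First I would describe $x^{k-1}(W^{\perp})$ concretely: $W^{\perp}$ is a hyperplane in $V$, and since $W\subseteq \ker x$, we have $x(V)\subseteq W^{\perp}$, so $x^{k-1}(W^{\perp}) = x^{k-1}(V)$ for $k\geq 2$, which is simply $\Span\{v_{ij} : j \leq \lambda_i - k + 1\} \oplus \Span\{v_{ij}^{*} : j \geq k\}$ — a coordinate subspace in the normal basis. For $k=1$ we get all of $W^{\perp}$. Then $x^{k-1}(W^{\perp}) + W$ differs from the coordinate subspace only by adding the line $\C w$. The condition $\mathtt{x} \in x^{k-1}(W^{\perp}) + W$ then becomes: there is a scalar $c$ such that $\mathtt{x} - cw$ lies in the coordinate subspace $x^{k-1}(V)$. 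Writing this out componentwise, the components of $\mathtt{x} - cw$ along the "bad" basis vectors $v_{i,1}$ (for $\lambda_i \geq k$, when the $v_{i,1}$-slot is excluded, i.e.\ $\lambda_i - k + 1 < 1$, that is $\lambda_i < k$... I need to be careful with the index bookkeeping here) and along $v_{i,\lambda_i}^{*}$ for the relevant range must all vanish. This gives a system $\gamma_i - c\alpha_i = 0$ and $\delta_i - c\beta_i = 0$ for the indices in the excluded range, and I need the largest $k$ for which this system is solvable in $c$ (generically in the $\alpha,\beta,\gamma,\delta$).

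The analysis then splits on the value of $\lambda_m$ relative to $\lambda_{m-1}$, and on whether we are in the $\mu$-decreasing or $\nu$-decreasing case. The key point driving the two cases in the statement: when $\mu'\preceq\mu$ and $\nu_{m-1}>\nu_m$ we have $\beta_m=\delta_m=0$ (Remark \ref{corollary:genericw}), so for $k$ up to $\lambda_{m-1}$ the only constraints come from indices $i<m$ with $\lambda_i = \lambda_{m-1}$, and genericity of the ratios $\gamma_i/\alpha_i$ (together with the single linear relation on the $\beta_i$ or $\delta_i$) makes the system solvable exactly at $k=\lambda_{m-1}$ but not beyond; in all other cases the extra index $m$ with $\beta_m,\delta_m$ not both zero forces failure one step earlier, at $k=\lambda_m$, giving $k_2 = \lambda_m - 1$. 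I would carry this out by: (i) fixing the coordinate description of $x^{k-1}(W^{\perp})+W$; (ii) reducing $\mathtt{x}\in x^{k_2-1}(W^{\perp})+W$ to the scalar solvability condition above; (iii) identifying, for each $k$, precisely which coordinates are constrained, separating the contributions from rows $i$ with $\lambda_i>k-1$; (iv) invoking genericity and the Proposition \ref{proposition:genericw} constraints to pin down the threshold.

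The main obstacle I anticipate is the index bookkeeping in step (iii): correctly tracking for a given $k$ which $v_{i,1}$ and which $v_{i,\lambda_i}^{*}$ survive in $x^{k-1}(V)$ (this depends on $\lambda_i$ versus $k$, and since $w,\mathtt{x}$ only involve $v_{i,1}$ and $v_{i,\lambda_i}^{*}$ with $i\leq m$, and $\lambda$ is weakly decreasing, the relevant rows are those with $\lambda_i \geq k$, of which there are few once $k$ is close to $\lambda_{m-1}$ or $\lambda_m$), and then checking that the resulting finite linear system in the single unknown $c$ has a solution precisely for $k$ in the claimed range — and no larger. The genericity statement must be argued carefully: solvability for a given $k$ is a closed condition on $(\gamma,\delta)$ while unsolvability for $k+1$ is open, and I must confirm the generic point of $\cB_{(\mu',\nu')}^{(\mu,\nu)}$ lands where claimed, using that $\mathtt{x}$ ranges over that variety with its defining constraints. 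I expect the rest to be a routine, if somewhat lengthy, case check mirroring the structure of Theorem \ref{theorem:etypeY}.
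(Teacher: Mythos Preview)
Your approach contains a genuine error at the very first computational step. You claim that since $W\subseteq \ker x$ we have $x(V)\subseteq W^{\perp}$, and hence $x^{k-1}(W^{\perp}) = x^{k-1}(V)$ for $k\geq 2$. The inclusion $x(V)\subseteq W^{\perp}$ is correct, but it only gives $x^{k-1}(V)=x^{k-2}(x(V))\subseteq x^{k-2}(W^{\perp})$, not $x^{k-1}(W^{\perp})=x^{k-1}(V)$. In fact the latter equality is equivalent to $\ker(x^{k-1})\not\subseteq W^{\perp}$, i.e.\ (using that $x\in\cS$ is symmetric for the form) to $W\not\subseteq x^{k-1}(V)$; but $W\subseteq x^{\lambda_m-1}(V)$ always, so for every $k\leq \lambda_m$ the space $x^{k-1}(W^{\perp})$ is a \emph{proper} hyperplane in $x^{k-1}(V)$, not the coordinate subspace you describe. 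A one-line example: with $\lambda=(\lambda_1)$ and $w=\alpha v_{1,1}+\beta v_{1,\lambda_1}^{*}$ one computes $x^{\lambda_1-1}(W^{\perp})=\C w=W$, strictly smaller than $x^{\lambda_1-1}(V)=\C\{v_{1,1},v_{1,\lambda_1}^{*}\}$.

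This invalidates the reduction to ``find $c$ with $\mathtt{x}-cw$ in a coordinate subspace'': the target $x^{k-1}(W^{\perp})$ is not a coordinate subspace in the critical range, so the componentwise solvability condition you set up does not capture the actual constraint. The paper's proof proceeds instead by identifying, for each relevant $k$, explicit elements of $W^{\perp}$ mapping under $x^{k-1}$ onto the basis vectors $v_{i,1},v_{i,\lambda_i}^{*}$ supporting $\mathtt{x}$ (e.g.\ combinations like $\alpha_i v_{i,\lambda_i}+\beta_i v_{i,1}^{*}$, which lie in $W^{\perp}$ by construction), and then counting dimensions to see whether the missing directions can be accounted for by adding the single line $W$. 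The case split in the statement arises because when $\mu'\preceq\mu$ and $\nu_{m-1}>\nu_m$ one has $|\Lambda_m|=1$ and $\beta_m=\delta_m=0$, which makes the crucial preimage $v_{m,1}^{*}$ lie in $W^{\perp}$ and pushes the threshold up to $\lambda_{m-1}$; otherwise at least two directions are missing from $x^{\lambda_m-1}(W^{\perp})$ and adding $W$ cannot recover them.
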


\begin{proof}
Since $X$ and $W$ are generically chosen in $\cB_{(\mu',\nu')}^{(\mu,\nu)}$, it follows that $ X \subseteq x^{\lambda_m-1}(V)$ but not  $x^{\lambda_m}(V)$. Therefore since $x(V) \subset W^{\perp}$, it immediately follows that $X \subseteq x^{\lambda_m-2}(W^{\perp})$, so the maximal $k_2 \geq \lambda_m-1$. \vspace{5pt}

Consider the set $\mathcal{S}:=\{v_{i,1}, v_{i,\lambda_{i}}^{*} \, | \, i \in \Lambda_m \}$. We claim that $\cS \subset x^{\lambda_m-1}(W^{\perp})$ if and only if $\nu_{m-1} > \nu_m$. It is clear that the set of vectors $\cS':=\{v_{i,\lambda_i}, v_{i, 1}^{*}  \, | \, i \in \Lambda_m\}$ maps onto $\cS$ via $x^{\lambda_m-1}$. Moreover it is easily seen that $\cS' \not\subseteq W^{\perp}$ whenever $|\Lambda_m| \geq 2$ since $W$ is generically chosen implying that $\alpha_i$'s and the $\beta_i$'s are non-zero as $i$ runs through $\Lambda_m$. Indeed since $W$ and $X$ are chosen generically, we have $\cS' \subseteq W^{\perp}$ if and only if $|\Lambda_m|=1$ and either $\alpha_m=0$ or $\beta_m=0$ in the expression for $w$ given in \eqref{eqn:genericw}. But for a generic $W$ in $\cB_{(\mu',\nu')}^{(\mu,\nu)}$, since $\mu'_m = \mu_m-1$, we can only assume that $\beta_m=0$ precisely when $\nu_{m-1} > \nu_m$ by Remark \ref{corollary:genericw} (which also implies that $\delta_m=0$ in the expression for $\ttx$). This establishes the claim. \vspace{5pt}

Now we can quickly see that if $\nu_{m-1}=\nu_m$ (which includes the case $\nu_{m-1}=\nu_m=0$), then $\cS$ and hence $X$ is not contained in $x^{\lambda_m-1}(W^{\perp})$. Moreover since $|\cS|>1$ and $X$ is generically chosen, it is impossible for $X$ to be contained in $x^{\lambda_m-1}(W^{\perp})+W$ as adding $W$ only the dimension by $1$. Therefore the maximal $k_2$ in this case is $\lambda_m-1$.  \vspace{5pt}

Now suppose that $\nu_{m-1} > \nu_m$. Then the expressions for the spanning vectors $w$ and $\ttx$ are as in \eqref{eqn:genericwx} and we claim that the set of vectors $\{v_{i,1}, v_{i,\lambda_i}^{*}  \, | \, 1 \leq i \leq m-1 \}$ are contained in $x^{\lambda_{m-1}-1}(W^{\perp})$. This is clear for all $i$ such that $\lambda_i>\lambda_{m-1}$, so we only need to check this for $i \in \Lambda_{m-1}$, that is the $i$ such that $\lambda_i=\lambda_{m-1}$. Notice that the vectors of the form $\alpha_{i}v_{i,\lambda_i}+\beta_{i}v_{i,1}^{*}$ are all contained in $W^{\perp}$
and so 
$$
x^{\lambda_{m-1}-1}(\alpha_{i}v_{i,\lambda_i}+\beta_{i}v_{i,1}^{*})=\alpha_{i}v_{i,1} \in x^{\lambda_{m-1}-1}(W^{\perp}),
$$ 
for $i \in \Lambda_{m-1}$. Therefore $v_{i,1}, v_{i,\lambda_i}^*$ are contained in $x^{\lambda_{m-1}-1}(W^{\perp})$ for $1 \leq i \leq m-1$ and so
$$
X \subseteq \C\{v_{i,1}, v_{i,\lambda_i}^{*}  \, | \, 1 \leq i \leq m-1 \} \oplus \C\{v_{m,1}\} \subseteq  x^{\lambda_{m-1}-1}(W^{\perp})+W.
$$
Since the vectors $v_{i,1}$ and $v_{i,\lambda_{i}}^{*}$ for $i \in \Lambda_{m-1}$ are not contained in $x^{\lambda_{m-1}}(V)$, they cannot be contained in $x^{\lambda_{m-1}}(W^{\perp})$ and in this case, adding the space $W$ cannot account for these missing vectors as well as $v_{m,1}$. So we have shown that the maximal $k_2$ in this case is $\lambda_{m-1}$ which completes the proof. 
\end{proof}




\begin{proposition} \label{proposition:genericl2}
Let $X$ and $W$ be two generic subspaces of $\ker(x) \cap (\C[x]v)^{\perp}$ such that $X \subseteq W^{\perp}$ and $\eType(v+X,x_{|X^{\perp}/X})=\eType(v+W,x_{|W^{\perp}/W})=(\mu',\nu').$ Then the maximal $l_2$ such that $$X^{\perp} \supseteq (x^{l_2-1})^{-1}(\C[x]v+X+W) \cap W^{\perp}$$ is $\lambda_m-1$ except when $\mu' \preceq \mu, \nu'=\nu$ and $\nu_{m-1} > \nu_m$. 
\end{proposition}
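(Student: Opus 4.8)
The plan is to combine Lemma~\ref{lemma:maxl2} with the self-adjointness of $x$ for $\langle\cdot,\cdot\rangle$, so that one direction of the estimate reduces directly to Proposition~\ref{proposition:maxk2prim} and the other becomes an explicit computation in the normal basis. First I would dispose of the case $\lambda_m=1$: here $\lambda_m=\mu_m+\nu_m=1$, and if we were decreasing $\nu_m$ then $\lambda_m=\nu_m=1$, which by Corollary~\ref{corollary:notperp} contradicts the standing hypothesis $X\subseteq W^\perp$; hence $\mu'_m=\mu_m-1$, $\nu_m=0$, and again by Corollary~\ref{corollary:notperp} we must have $\nu_{m-1}>0=\nu_m$, which is exactly the excluded case of the proposition. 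So from now on $\lambda_m\geq 2$.

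For the bound $l_2\leq\lambda_m-1$: since $x$, and hence each power $x^{j}$, is self-adjoint for the symplectic form and the form is non-degenerate, for any $x$-stable subspace $U$ one has $\big((x^{j})^{-1}(U)\cap W^\perp\big)^\perp=x^{j}(U^\perp)+W$. Applying this with $U=\C[x]v+X+W$ (which is $x$-stable as $X,W\subseteq\ker x$), the condition in Lemma~\ref{lemma:maxl2} defining $l_2$ becomes equivalent to $X\subseteq x^{l-1}\big((\C[x]v+X+W)^\perp\big)+W$. Since $(\C[x]v+X+W)^\perp\subseteq W^\perp$, any $l$ satisfying this also satisfies $X\subseteq x^{l-1}(W^\perp)+W$, so $l_2\leq k_2$ with $k_2$ as in Proposition~\ref{proposition:maxk2prim}. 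The excluded case here (``$\mu'\preceq\mu$, $\nu'=\nu$, $\nu_{m-1}>\nu_m$'') is precisely the exceptional case of Proposition~\ref{proposition:maxk2prim}, since decreasing $\mu_m$ forces $\nu'=\nu$; so in all remaining cases $k_2=\lambda_m-1$ and therefore $l_2\leq\lambda_m-1$.

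It then remains to prove $l_2\geq\lambda_m-1$, i.e. that every $u\in W^\perp$ with $x^{\lambda_m-2}u\in\C[x]v+X+W$ satisfies $\langle u,\mathtt{x}\rangle=0$. Picking any $z$ with $x^{\lambda_m-2}z=\mathtt{x}-cw$ for some scalar $c$ (such $z$ exists because $\mathtt{x},w\in x^{\lambda_m-2}(V)$), self-adjointness together with the decomposition $x^{\lambda_m-2}u=\xi+c'w+d\mathtt{x}$ with $\xi\in\C[x]v$ gives $\langle u,\mathtt{x}\rangle=\langle x^{\lambda_m-2}u,z\rangle=\langle\xi,z\rangle+c'\langle w,z\rangle+d\langle\mathtt{x},z\rangle$, while $\langle\mathtt{x},z\rangle=\langle x^{\lambda_m-2}z,z\rangle=0$ automatically; so it suffices to choose $z\in(\C[x]v+W)^\perp$. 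Expanding $z,w,\mathtt{x}$ in the normal basis, the equation $x^{\lambda_m-2}z=\mathtt{x}-cw$ determines the coefficients of $z$ on $v_{i,\lambda_m-1}$ and $v^*_{i,\lambda_i-\lambda_m+2}$ for $i\leq m$, and (using $\lambda_i\geq\lambda_m$ for $i\leq m$) forces the coefficients of $z$ on $v^*_{i,1}$ and $v_{i,\lambda_i}$ for $i\leq m$ — exactly the ones that enter $\langle z,w\rangle$ — to vanish, so $z\perp w$ holds automatically. The conditions $z\perp x^k v$ ($0\leq k\leq\mu_1-1$) decouple, each free coefficient of $z$ occurring in only one of them, so the system is solvable precisely when, at each $k$ with no free coefficient available, the resulting constraint on the determined coefficients holds. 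Tracking which pairs $(i,k)$ yield free coefficients, one finds the only ``bad'' level carrying a nontrivial constraint is the one with $\lambda_m-k-2=\nu_m$ (levels with $\lambda_m-k-2>\nu_m$ have a free coefficient supplied by the index $m$ itself, using that $\nu$ is decreasing; lower levels contribute no basis vectors occurring in $\mathtt{x}$), where the constraint reads $\sum_{i\in\Delta_{\leq m}}\delta_i=c\sum_{i\in\Delta_{\leq m}}\beta_i$. By Proposition~\ref{proposition:genericw}, if $\mu'_m=\mu_m-1$ then $\sum_{i\in\Delta_{\leq m}}\beta_i=\sum_{i\in\Delta_{\leq m}}\delta_i=0$ (applying it to $W$ and to $X$), whereas if $\nu'_m=\nu_m-1$ then $\sum_{i\in\Delta_{\leq m}}\beta_i\neq 0$ generically; in either situation a suitable $c$ exists, completing the proof.

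The step I expect to be the main obstacle is the bookkeeping in the last paragraph: pinning down, for each $k$, which coefficient of $z$ is fixed by $x^{\lambda_m-2}z=\mathtt{x}-cw$ and which remain free, and checking that every level other than the one at $\nu_m$ imposes no constraint. This is the exact analogue of the $\mathcal S,\mathcal S'$ analysis in the proof of Proposition~\ref{proposition:maxk2prim}, with the role played there by Remark~\ref{corollary:genericw} now played by the full strength of Proposition~\ref{proposition:genericw}. A minor additional point is to verify that $\C[x]v$, $w$ and $\mathtt{x}$ are linearly independent for generic $W,X$, so that the decomposition $x^{\lambda_m-2}u=\xi+c'w+d\mathtt{x}$ is well defined; this again holds generically, and in any case one can avoid uniqueness by working modulo $\C[x]v+W+X$ throughout.
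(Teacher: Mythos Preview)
Your argument is correct, and it takes a genuinely different route from the paper's.

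For the upper bound $l_2\le\lambda_m-1$, the paper writes down an explicit witness: the vector
\[
w'=\sum_{i=1}^{m}\alpha_i v_{i,\lambda_m}+\beta_i v^*_{i,\lambda_i-\lambda_m+1}
\]
satisfies $x^{\lambda_m-1}(w')=w$ and $w'\in W^\perp$, while $\langle\mathtt{x},w'\rangle=\sum_{i\in\Lambda_m}(\alpha_i\delta_i-\beta_i\gamma_i)$ is generically nonzero outside the excluded case. This is a three-line computation. Your approach is instead structural: the self-adjointness identity $\bigl((x^{j})^{-1}(U)\cap W^\perp\bigr)^\perp=x^{j}(U^\perp)+W$ converts the $l_2$-condition into $X\subseteq x^{l-1}(U^\perp)+W$ with $U^\perp\subseteq W^\perp$, so that $l_2\le k_2$ is immediate and the exceptional case for $l_2$ is automatically the exceptional case of Proposition~\ref{proposition:maxk2prim}. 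This is a nicer explanation of \emph{why} the two propositions share the same exceptional configuration.

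For the lower bound $l_2\ge\lambda_m-1$, the situation is reversed. The paper's argument is extremely terse (essentially recording that $X^\perp\supseteq\ker(x^{\lambda_m-1})$ and asserting the rest is ``easily seen''). Your construction of a preimage $z\in(\C[x]v+W)^\perp$ with $x^{\lambda_m-2}z=\mathtt{x}-cw$, together with the observation $\langle x^{\lambda_m-2}z,z\rangle=0$, is a complete argument; the bookkeeping you flag as the obstacle does work out exactly as you describe, with the single constrained level at $k=\mu_m-2$ resolved by Proposition~\ref{proposition:genericw}. One small point: your handling of the case $\lambda_m=2$ (where $x^{\lambda_m-2}=\mathrm{id}$ and $z=\mathtt{x}-cw$) should be singled out, but it is immediate from $\mathtt{x},w\in(\C[x]v)^\perp$ and $X\subseteq W^\perp$.

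In short: the paper trades a one-line explicit witness for the upper bound against a sketched lower bound; you trade a conceptual duality reduction for the upper bound against a fully worked-out preimage construction for the lower bound. Both are valid; yours is more self-contained.
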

\begin{proof}
Consider the vector $w'=\sum_{i=1}^{m} \alpha_{i}v_{i,\lambda_m}+\beta_{i}v_{i,\lambda_i-\lambda_m+1}^{*}$. We have that $x^{\lambda_m-1}(w')=w$, and $w'\in W^\perp$ because $$\langle w,w' \rangle=\sum_{i \in \Lambda_m}(\alpha_{i}\beta_{i}-\beta_{i}\alpha_{i})=0.$$
but $$\langle \ttx,w' \rangle=\sum_{i \in \Lambda_m}\alpha_{i}\delta_{i}-\beta_{i}\gamma_{i},$$ which is generically non-zero except when $\mu' \preceq \mu, \nu'=\nu$ and $\nu_{m-1} > \nu_m$, which forces $|\Lambda_m|=1$ and so $\beta_m=\delta_m=0$ by Corollary \ref{corollary:genericw}. Thus when this is not the case, we have $(x^{\lambda_m-1})^{-1}(W) \not\subseteq X^{\perp}$. Furthermore, since $X \subseteq x^{\lambda_m-1}(V)$ it follows that $X^{\perp} \supseteq \ker(x^{\lambda_m-1}) \supseteq  \ker(x^{\lambda_m-2})$, and it is easily seen that $X^{\perp} \supseteq x^{\lambda_m-2}(W).$ Therefore $X^{\perp} \supseteq (x^{\lambda_m-2})^{-1}(\C[x]v+X+W)\cap W^{\perp}.$ 
\end{proof}

\begin{remark}
We will deal with the cases $\mu' \preceq \mu$, $\nu'=\nu$ and $\nu_{m-1} > \nu_m$ in Proposition \ref{proposition:D1l2}.
\end{remark}

\subsection{Box Removed from the Left}
We are now getting to the proof of Theorem \ref{theorem:etypeY}. We first consider the case where $\mu'_m=\mu_m-1$.
\subsubsection{ $\mu' \preceq \mu, \nu'=\nu$ and $\nu_{m-1}=\nu_{m}$} 

Recall that the condition $\nu_{m-1}=\nu_{m}$ is equivalent to $|\Delta_{\leq m}| \geq 2$ in the case that $\nu_{m-1}=\nu_{m} \neq 0$ and $\ell(\nu)+1<m$ in the case that $\nu_{m-1}=\nu_m=0$. Also recall from Proposition \ref{proposition:genericw} that a generic point in $\cB_{(\mu',\nu')}^{(\mu,\nu)}$ has the following conditions on the coefficients of its spanning vector; namely for a spanning vector $w$ as in \eqref{eqn:genericwx} we have: 

\begin{align*}
\sum_{i \in\Delta_{\leq m}} \beta_i=0 	\quad	&\text{whenever}  \quad  		\nu_{m-1}=\nu_{m} \neq 0; \quad \text{and} \\
\sum_{i=\ell(\nu)+1}^{m} \beta_i=0   	\quad 	&\text{whenever} \quad   		\nu_{m-1}=\nu_{m}=0. 
\end{align*}
Example \ref{example:partition1} (in the Appendix) serves as a guide to the remainder of this section. 
\begin{proposition} \label{proposition:trick}
Let $X$ and $W$ be generic points in $\cB_{(\mu',\nu')}^{(\mu,\nu)}$. Then the maximal $k_2$ such that $$X \subseteq x^{k_2-1}(W^{\perp}) + \C[x]v + W$$ is $\lambda_{m}-1$. 
\end{proposition}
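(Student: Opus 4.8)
The statement of Proposition \ref{proposition:trick} is the analogue of Proposition \ref{proposition:maxk2prim}, but now with $\C[x]v$ added into the condition, and in the specific regime $\mu'_m = \mu_m - 1$, $\nu' = \nu$, $\nu_{m-1} = \nu_m$. The plan is to show the maximal such $k_2$ is exactly $\lambda_m - 1$ by proving the two inequalities separately: first that $X \subseteq x^{\lambda_m - 2}(W^{\perp}) + \C[x]v + W$, and then that $X \not\subseteq x^{\lambda_m - 1}(W^{\perp}) + \C[x]v + W$.

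For the lower bound $k_2 \geq \lambda_m - 1$, I would argue as in Proposition \ref{proposition:maxk2prim}: since $X$ and $W$ are generic in $\cB_{(\mu',\nu')}^{(\mu,\nu)}$, we have $X \subseteq x^{\lambda_m - 1}(V)$, and since $x(V) \subseteq W^{\perp}$ (because $x$ restricted to $W^{\perp}$ makes sense and $\im x \subseteq \ker(x)^{\perp} \supseteq W^{\perp}$ — more precisely $x(V) \subseteq W^{\perp}$ as $W \subseteq \ker x$), it follows that $X \subseteq x^{\lambda_m - 2}(W^{\perp}) \subseteq x^{\lambda_m - 2}(W^{\perp}) + \C[x]v + W$. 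So $k_2 \geq \lambda_m - 1$ and it only remains to rule out $k_2 \geq \lambda_m$.

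For the upper bound, I want to show $X \not\subseteq x^{\lambda_m - 1}(W^{\perp}) + \C[x]v + W$. The key is to understand $x^{\lambda_m - 1}(W^{\perp})$ well enough. As in the proof of Proposition \ref{proposition:maxk2prim}, consider the set $\cS := \{ v_{i,1}, v^*_{i,\lambda_i} \mid i \in \Lambda_m \}$; the vectors preimaging onto $\cS$ under $x^{\lambda_m - 1}$ are $\cS' := \{ v_{i,\lambda_i}, v^*_{i,1} \mid i \in \Lambda_m \}$, and since $\nu_{m-1} = \nu_m$ the generic genericity conditions (Proposition \ref{proposition:genericw}, the displayed formulas above with $|\Delta_{\leq m}| \geq 2$ or $\ell(\nu)+1 < m$) force that $\cS' \not\subseteq W^{\perp}$ — more than one coordinate is genuinely nonzero, so $\cS$ is genuinely not in $x^{\lambda_m-1}(W^{\perp})$. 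Moreover $|\cS| \geq 2$, so $X$ (one-dimensional, generic) cannot be squeezed into $x^{\lambda_m-1}(W^{\perp})$ even after adding the one extra dimension $W$. The new ingredient here versus Proposition \ref{proposition:maxk2prim} is that we also add $\C[x]v$; I would check that $\C[x]v$ is spanned by vectors $v_{i,j}$ with $j < \lambda_i = \mu_i$ (from the normal basis description, $v = \sum_i v_{i,\mu_i}$, so $\C[x]v = \C\{ x^t v \} = \C\{ \sum_i v_{i, \mu_i - t} \}$), none of which can help recover the missing first-column/last-column vectors indexed by $\Lambda_m$, since those live in rows with $\lambda_i = \lambda_m$ and $\C[x]v$ only contributes vectors in the $\mu$-columns below position $\mu_i$, which is disjoint from what is needed. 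Hence adding $\C[x]v + W$ still leaves $X$ outside, giving $k_2 = \lambda_m - 1$.

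The main obstacle I anticipate is the bookkeeping in the last step: carefully verifying that neither $\C[x]v$ nor $W$ contributes any component that lies in $x^{\lambda_m-1}(\text{anything})$ along the relevant basis directions indexed by $\Lambda_m$, and that the genericity of $X$ (so that its $\cS$-components are all nonzero when $|\cS| \geq 2$) really does obstruct containment. This is essentially a dimension-count / generic-position argument, but one has to be precise about which of $v_{i,1}, v^*_{i,\lambda_i}$ appear with nonzero coefficient in $\ttx$ versus which can be absorbed — in particular that since $\nu_{m-1} = \nu_m$ (not $\nu_{m-1} > \nu_m$), we are \emph{not} in the situation of Remark \ref{corollary:genericw} where $\beta_m = \delta_m = 0$, so the obstruction is genuinely present. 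I expect the argument to mirror the structure of Proposition \ref{proposition:maxk2prim} closely, with the extra summand $\C[x]v$ handled by the observation that it only touches the interior $\mu$-columns.
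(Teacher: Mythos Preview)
Your lower bound is fine, and your instinct that the upper bound should mirror Proposition~\ref{proposition:maxk2prim} is correct in spirit, but the treatment of the extra summand $\C[x]v$ contains a genuine error.

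You claim that $\C[x]v$ ``only contributes vectors in the $\mu$-columns below position $\mu_i$, which is disjoint from what is needed''. This is false. Since $v=\sum_i v_{i,\mu_i}$, the element $x^{\mu_m-1}(v)=\sum_{i:\mu_i\geq\mu_m} v_{i,\mu_i-\mu_m+1}\in\C[x]v$ has a nonzero $v_{m,1}$ component (and more generally $v_{i,1}$ components for every $i\in\Lambda_m$ with $\mu_i=\mu_m$). So $\C[x]v$ does interact with exactly the basis vectors you are trying to exclude from $x^{\lambda_m-1}(W^{\perp})$, and a priori could allow you to recover them. Indeed, in the parallel case where $\nu_m$ is decreased instead of $\mu_m$ (Proposition~\ref{proposition:D2k2}), this contribution from $\C[x]v$ \emph{does} succeed in recovering all the $v_{i,1}$, and the answer jumps to $k_2=\lambda_m$. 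So you cannot simply dismiss $\C[x]v$; you must explain why it fails to help in the present regime.

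The missing idea is this. From the constraint $\sum_{i\in\Delta_{\leq m}}\beta_i=0$ (which is exactly the defining condition for the $\mu'_m=\mu_m-1$ case in Proposition~\ref{proposition:genericw}), one computes directly that the relevant piece of $\C[x]v$, namely $v'=\sum_{i\in\Delta_{\leq m}}v_{i,\mu_i-\mu_m+1}$, already lies in the span of the vectors $\beta_m v_{i,\mu_i-\mu_m+1}-\beta_i v_{m,1}\in x^{\lambda_m-1}(W^{\perp})$: summing these over $i\in\Delta_{<m}$ and using $\sum\beta_i=0$ gives exactly $\beta_m v'$. Hence adding $\C[x]v$ contributes nothing new beyond $x^{\lambda_m-1}(W^{\perp})$ in the relevant directions, the individual $v_{i,1}$ for $i\in\Lambda_m$ remain unreachable, and therefore so do the $v_{i,\lambda_i}^*$ after adding the single extra dimension $W$. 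This computation is the heart of the proof (and indeed why the paper labels it ``trick''); without it your argument does not distinguish the present case from the $\nu'\prec\nu$ case, where the conclusion is different.
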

 \begin{proof}
We know by Proposition \ref{proposition:maxk2prim} that $k_2 \geq \lambda_{m}-1$. We first suppose that $|\Delta_{\leq m}| \geq 2$. To show that $k_2=\lambda_m-1$ we show that the vectors $v_{i,\mu_i-\mu_m+1}$ for $i \in  \Delta_{\leq m}$ are not contained in $x^{\lambda_m-1}(W^{\perp})+\C[x]v$ which implies the vectors $v_{i,\lambda_i}^{*}$ are not contained in $x^{\lambda_m-1}(W^{\perp}) + \C[x]v + W$ for $i \in  \Delta_{\leq m}$ and so $X \not\subseteq x^{\lambda_m}(W^{\perp}) + \C[x]v + W$. \vspace{5pt}

Consider the vectors $\beta_m v_{i,\lambda_i} - \beta_i v_{m,\lambda_m}$ for $i \in   \Delta_{\leq m}$. These vectors are clearly contained in $W^{\perp}$ and so $\beta_m v_{i,\mu_i - \mu_m+1} - \beta_i v_{m,1}=x^{\lambda_m-1}(\beta_{m} v_{i,\lambda_i} - \beta_{i}v_{m,\lambda_{m}})$ is contained in $x^{\lambda_m-1}(W^{\perp})$. Now if each $v_{i,\mu_i-\mu_m+1}$ were in $x^{\lambda_m-1}(W^{\perp}) + \C[x]v,$ we would require that the vector $$v'=\sum_{i \in \Delta_{\leq m}}v_{i,\mu_i-\mu_m+1}=x^{\mu_i-\mu_m}(v)-\sum_{i=1}^{m'}v_{i, \mu_i-\mu_m+1},$$ where $m'=\min \Delta_m-1$ lie outside the span of $\beta_m v_{i,\mu_i - \mu_m+1} - \beta_i v_{m,1}$ as $i$ runs through $\Delta_{\leq m}$. However we calculate 
 \begin{eqnarray*}
 \sum_{i \in \Delta_{< m}}\beta_m v_{i,\mu_i - \mu_m+1} - \beta_i v_{m,1} 	&=& \beta_m  \sum_{i \in \Delta_{< m}}v_{i,\mu_i - \mu_m+1} - \left( \sum_{i \in \Delta_{< m}} \beta_i \right)v_{m,1} \\
 														&=& \beta_m  \sum_{i \in \Delta_{< m}}v_{i,\mu_i - \mu_m+1} + \beta_m v_{m,1} \quad \text{since} \quad \sum_{i \in \Delta_{\leq m}}\beta_i=0\\
														&=& \beta_m \sum_{i \in \Delta_{\leq m}}v_{i,\mu_i - \mu_m+1} \\
														&=& \beta_mv'.
 \end{eqnarray*}
Hence we cannot obtain the individual basis vectors $v_{i,\mu_i-\mu_m+1}$ in $x^{\lambda_m-1}(W^{\perp}) + \C[x]v$, for $i \in \Delta_{\leq m}$. In particular, we cannot obtain the vectors  $v_{i,1}$ for $i \in \Lambda_m$ by adding $\C[x]v$ which implies that we cannot obtain the vectors $v_{i,\lambda_i}^{*}$ for $i \in \Lambda_{m}$ by adding $\C[x]v+W$.  Therefore we cannot obtain $X$ as a subspace of $x^{\lambda_m-1}(W^{\perp}) + \C[x]v +W$.  \vspace{5pt}

Now suppose that $\ell(\nu)+1 < m$. Then by repeating the arguments above with this time changing the indexing set for $i$, we see the vectors $v_{i,1}$ for $\ell(\nu)+1 \leq i \leq m$ are not contained in the space $x^{\lambda_m-1}(W^{\perp})+\C[x]v$ and so the vectors  $v_{i,\lambda_i}^{*}$ are not contained in $x^{\lambda_m-1}(W^{\perp})+\C[x]v+W$. Therefore the maximal $k_2$ is $\lambda_m-1$ and we are done.\end{proof}
 
\begin{proof}[Proof of Theorem \ref{theorem:etypeY}: Cases 1(a),(b),(c)] We can now describe how exactly $\Type(x,Y^{\perp}/(\C[x]v+Y)$ is obtained from $\Type(x,W^{\perp}/(\C[x]v+W))$ when  $\mu' \preceq \mu, \nu'=\nu$ and $\nu_{m-1}=\nu_{m}$. In this case we have by Proposition  \ref{proposition:trick} and Proposition \ref{proposition:genericl2}, $k_2=l_2=\lambda_m-1$.  Let 
$$ \sigma=\Type(x,W^{\perp}/(\C[x]v+W) \quad \text{and} \quad \sigma'=\Type(x,Y^{\perp}/(\C[x]v+Y)) $$ 
We explicitly determine which parts of $\sigma$ are decreased by $1$ in order to obtain $\sigma'$. We have 
$$
\sigma=(\mu_1+\nu_1, \mu_2+\nu_1, \ldots, \mu_{m-1}+\nu_{m-1}, \mu_{m}-1+\nu_{m-1},  \mu_{m}-1+\nu_{m}, \mu_{m+1}+\nu_{m},\ldots) 
$$
 and we know that that $\sigma'$ is obtained from $\sigma$ by decreasing the last two parts of size $\lambda_m-1=\mu_m+\nu_m-1$. Since $\nu_{m-1}=\nu_m$, we know that there certainly are at least two parts of $\sigma$ have have this size, namely $\sigma_{2(m-1)}=\mu_{m}-1+\nu_{m-1}$ and $\sigma_{2m-1}=\mu_{m}-1+\nu_{m}$. \vspace{5pt}

\textbf{Case 1(a):} If $\mu_{m}-1 > \mu_{m+1}$, then  $\mu_{m}-1+\nu_{m} > \mu_{m+1}+\nu_{m}$ and so $\sigma_{2(m-1)}$ and $\sigma_{2m-1}$ are the last two consecutive parts of size $\lambda_m-1$, and so in that case we have 
$$
\sigma'=(\mu_1+\nu_1, \mu_2+\nu_1, \ldots, \mu_{m-1}+\nu_{m-1}, \mu_{m}-2+\nu_{m-1},  \mu_{m}-2+\nu_{m}, \mu_{m+1}+\nu_{m},\ldots). 
$$
Therefore $\eType(v+Y,x_{|Y^{\perp}/Y})$ is obtained from $\eType(v+W,x_{|W^{\perp}/W})$ by decreasing $\mu_{m}'=\mu_{m}-1$ by $1$ (note that $\nu_{m-1}$ and $\nu_{m}$ can both be $0$ here, we only require that they are equal). Thus we have
$$
\eType(v,x)\xrightarrow{\text{decrease $\mu_{m}$ by 1}}\eType(v+W,x_{|W^{\perp}/W})\xrightarrow{\text{decrease $\mu_{m}'$ by 1}}\eType(v+Y,x_{|Y^{\perp}/Y}).
$$
In terms of the reverse bumping algorithm, 
this corresponds to a number being removed from the left and remaining on the left and on the same row. \vspace{5pt}

\textbf{Case 1(b):} Suppose that $\mu_{m}-1=\mu_{m+1}$, and either $\max \Gamma_{m+1} > \max \Delta_m$ or $\mu_{m+1}=0$, then the last two consecutive parts of $\sigma$ of size $\lambda_m-1$ are 
$$ \sigma_{2\max \Delta_{m}-1}=\mu_{\max \Delta_{m}}+\nu_{\max \Delta_{m}} \quad \text{and} \quad \sigma_{2\max \Delta_{m}}=\mu_{\max \Delta_{m}+1}+\nu_{\max \Delta_{m}} $$
Therefore in this case $\eType(v+Y,x_{|Y^{\perp}/Y})$ is obtained from $\eType(v+W,x_{|W^{\perp}/W})$ by decreasing $\nu_{\max \Delta_m}$ by $1$. Thus we have

$$
\eType(v,x)\xrightarrow{\text{decrease $\mu_{m}$ by 1}}\eType(v+W,x_{|W^{\perp}/W})\xrightarrow{\text{decrease $\nu_{\max \Delta_m}$ by 1}}\eType(v+Y,x_{|Y^{\perp}/Y}).
$$

This corresponds to a number being removed from the left and moving across the wall and possibly moving down several rows on the right.

 \textbf{Case 1(c):} Suppose that $\mu_{m}-1=\mu_{m+1}$, and $\max \Gamma_{m+1} \leq \max \Delta_m$. Then the last two consecutive parts of $\sigma$ of size $\lambda_m -1$ are: 
 $$ \sigma_{2(\max \Gamma_{m+1}-1)}=\mu_{\max \Gamma_{m+1}}+\nu_{\max \Gamma_{m+1}-1} \quad \text{and} \quad \sigma_{2\max \Gamma_{m+1}-1}=\mu_{\max \Gamma_{m+1}}+\nu_{\max \Gamma_{m+1}}.
$$
Therefore $\eType(v+Y,x_{|Y^{\perp}/Y})$ is obtained from $\eType(v+W,x_{|W^{\perp}/W})$ by decreasing $\mu_{\max  \Gamma_{m+1}}$ by $1$. Thus we have
$$
\eType(v,x)\xrightarrow{\text{decrease $\mu_{m}$ by 1}}\eType(v+W,x_{|W^{\perp}/W})\xrightarrow{\text{decrease $\mu_{\max  \Gamma_{m+1}}$ by 1}}\eType(v+Y,x_{|Y^{\perp}/Y})
$$
This corresponds to a number being removed from the left and moving down possibly several rows but remaining on the left. \vspace{5pt}
 \end{proof}

\subsubsection{ $\mu' \preceq \mu, \nu' =\nu$ and $\nu_{m-1}>\nu_{m}$}
In this case Corollary \ref{corollary:genericw} forces us to have the coefficients of $v_{m,\lambda_m}^{*}$, namely $\beta_m$ and $\delta_m$ to be $0$, and so a generic $w$ and $\ttx$ have the form 
$$
w=\sum_{i=1}^{m-1}(\alpha_{i}v_{i,1}+\beta_{i}v_{i,\lambda_i}^{*})+\alpha_{m}v_{m,1} \quad \text{and} \quad \ttx=\sum_{i=1}^{m-1}(\gamma_{i}v_{i,1}+\delta_{i}v_{i,\lambda_i}^{*})+\gamma_{m}v_{m,1} 
$$ 
with $\alpha_m \neq 0 \neq \gamma_m$. Note it is possible for $\nu_{m}=0$ as we only require that $\nu_{m-1}>\nu_{m}$. 

Example \ref{example:5.2.2} (in the Appendix) serves as a guide to the rest of this section.
\begin{proposition} \label{proposition:D1k2}
Let $W$ and $X$ be generic points in $\cB_{(\mu',\nu')}^{(\mu,\nu)}$. Then maximal $k_2$ such that $X \subset x^{k_2-1}(W^{\perp})+\C[x]v+W$ is $\lambda_{m-1}$. 
\end{proposition}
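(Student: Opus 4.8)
The plan is to mirror the structure of the proof of Proposition \ref{proposition:maxk2prim}, but now working under the hypotheses $\mu' \preceq \mu$, $\nu' = \nu$, and $\nu_{m-1} > \nu_m$, which (via Corollary \ref{corollary:genericw}) force $\beta_m = \delta_m = 0$. First I would recall that, since $X$ and $W$ are generic in $\cB_{(\mu',\nu')}^{(\mu,\nu)}$, Proposition \ref{proposition:maxk2prim} already gives $k_2 \geq \lambda_{m-1}$ in exactly this case (the first branch of that proposition). So the content here is the matching upper bound: $X \not\subseteq x^{\lambda_{m-1}}(W^\perp) + \C[x]v + W$.

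To get the upper bound I would argue as in Proposition \ref{proposition:trick}: exhibit a basis vector among the $v_{i,*}$ appearing in the support of $\mathtt{x}$ that cannot lie in $x^{\lambda_{m-1}}(W^\perp) + \C[x]v + W$. The natural candidates are the vectors $v_{i,1}$ (equivalently the dual vectors $v_{i,\lambda_i}^*$ after accounting for the $\C[x]v$ and $W$ shifts) for $i \in \Lambda_{m-1}$, i.e.\ those rows $i$ with $\lambda_i = \lambda_{m-1}$. The key point is that $x^{\lambda_{m-1}}(V)$ does not contain $v_{i,1}$ or $v_{i,\lambda_i}^*$ for $i \in \Lambda_{m-1}$ (these are "top of column" vectors for the length-$\lambda_{m-1}$ Jordan blocks), so they certainly cannot lie in $x^{\lambda_{m-1}}(W^\perp)$; and then I would show that adding the one-dimensional spaces coming from $\C[x]v$ and $W$ (together two extra dimensions) cannot simultaneously produce all of the missing vectors $\{v_{i,1} : i \in \Lambda_{m-1}\} \cup \{v_{m,1}\}$, using that $|\Lambda_{m-1}| \geq 1$ and that $w = \sum_{i=1}^{m-1}(\alpha_i v_{i,1} + \beta_i v_{i,\lambda_i}^*) + \alpha_m v_{m,1}$ is generic with $\alpha_m \neq 0$. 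The genericity of the $\alpha_i, \beta_i$ (no linear relation is forced among them in this regime) is what prevents a lucky cancellation, exactly as in the computation at the end of Proposition \ref{proposition:trick}.

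The main obstacle I anticipate is the bookkeeping needed to pin down precisely which basis vectors are "missing" from $x^{\lambda_{m-1}-1}(W^\perp)$ but present in $x^{\lambda_{m-1}-1}(W^\perp) + \C[x]v + W$ — that is, verifying that $k_2$ is exactly $\lambda_{m-1}$ and not larger. Concretely, one must check that $X \subseteq x^{\lambda_{m-1}-1}(W^\perp) + \C[x]v + W$ does hold: the argument in Proposition \ref{proposition:maxk2prim} showing $v_{i,1}, v_{i,\lambda_i}^* \in x^{\lambda_{m-1}-1}(W^\perp)$ for $1 \leq i \leq m-1$ carries over verbatim (using $\alpha_i v_{i,\lambda_i} + \beta_i v_{i,1}^* \in W^\perp$ and applying $x^{\lambda_{m-1}-1}$), and then $v_{m,1} = \alpha_m^{-1}(w - \sum_{i<m}(\alpha_i v_{i,1} + \beta_i v_{i,\lambda_i}^*)) \in W + x^{\lambda_{m-1}-1}(W^\perp)$ since $\beta_m = 0$; hence $X \subseteq \C\{v_{i,1}, v_{i,\lambda_i}^* : i \le m-1\} \oplus \C\{v_{m,1}\} \subseteq x^{\lambda_{m-1}-1}(W^\perp) + \C[x]v + W$. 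The failure at the next power — that $v_{i,1}$ or $v_{i,\lambda_i}^*$ for $i \in \Lambda_{m-1}$ escapes $x^{\lambda_{m-1}}(W^\perp) + \C[x]v + W$ — is the genuine computation, and I would handle it by the same generic-nonvanishing argument as in Proposition \ref{proposition:trick}, dividing into the subcases $|\Lambda_{m-1}| \geq 2$ and $|\Lambda_{m-1}| = 1$ if needed.
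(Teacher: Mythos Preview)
Your proposal is correct and follows essentially the same approach as the paper: both obtain the lower bound $k_2 \geq \lambda_{m-1}$ directly from Proposition~\ref{proposition:maxk2prim}, and both establish the upper bound by showing that the vectors $\{v_{i,1}, v_{i,\lambda_i}^* : i \in \Lambda_{m-1}\} \cup \{v_{m,1}\}$ cannot all be recovered inside $x^{\lambda_{m-1}}(W^\perp) + \C[x]v + W$. One small caveat: describing $\C[x]v$ as contributing ``one extra dimension'' is imprecise (it is not one-dimensional), but the paper's actual mechanism---that $\C[x]v$ only produces sums $\sum_i v_{i,\mu_i-k}$ and hence cannot isolate the individual $v_{i,1}$---is exactly the computation you point to in Proposition~\ref{proposition:trick}, so this does not affect the argument.
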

\begin{proof}
By Proposition \ref{proposition:maxk2prim}, we know that $\lambda_{m-1}$ is the largest integer such that $X \subset x^{\lambda_{m-1}-1}(W^{\perp})+W$, so $k_2 \geq \lambda_{m-1}$. Also by the proof of Proposition \ref{proposition:maxk2prim}, we know that the set of vectors $\{ v_{i,1}, v_{i,\lambda_i}^{*} \, | \, 1 \leq i \leq m-1\} \subset x^{\lambda_{m-1}-1}(W^{\perp})$ whereas the set of vectors $\{v_{i,1}, v_{i,\lambda_i}^{*} \, | \, i \in \Lambda_{m-1}\} \cup \{v_{1,m}\} \not\subseteq x^{\lambda_m}(W^{\perp})+W$.  It once again follows that the set of vectors $\{v_{i,1}, v_{i,\lambda_i}^{*} \, | \, i \in \Lambda_{m-1} \} \cup \{v_{1,m}\}$ are not contained in $x^{\lambda_m}(W^{\perp})+\C[x]v+W$ since we cannot obtain the individual vectors $\{v_{i,1}\}$ for $ i \in \Lambda_{m-1}$ and the vector $\{v_{1,m}\}$ by adding the subspace $\C[x]v$ and so we cannot then obtain the individual vectors $v_{i,\lambda_i}^{*}$ for $ i \in \Lambda_{m-1}$ either. Therefore $X \not\subseteq x^{\lambda_m}(W^{\perp})+W$ and $k_2=\lambda_{m-1}$ completing the proof. 
\end{proof}

\begin{proposition} \label{proposition:D1l2}
Let $W$ and $X$ be generic points in $\cB_{(\mu',\nu')}^{(\mu,\nu)}$. Then the maximal $l_2$ such that $$X^{\perp} \supseteq (x^{l_2-1})^{-1}(\C[x]v+X+W) \cap W^{\perp}$$ is $\mu_m+\nu_{m-1}-1$. 
\end{proposition}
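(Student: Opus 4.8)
The plan is to follow the template of Propositions~\ref{proposition:genericl2} and~\ref{proposition:D1k2}: compute $l_2$ from the criterion of Lemma~\ref{lemma:maxl2} by proving one inequality in each direction, using throughout the explicit generic spanning vectors
$$w=\sum_{i=1}^{m-1}\bigl(\alpha_{i}v_{i,1}+\beta_{i}v_{i,\lambda_i}^{*}\bigr)+\alpha_{m}v_{m,1},\qquad \mathtt{x}=\sum_{i=1}^{m-1}\bigl(\gamma_{i}v_{i,1}+\delta_{i}v_{i,\lambda_i}^{*}\bigr)+\gamma_{m}v_{m,1}$$
with $\beta_m=\delta_m=0$ (Remark~\ref{corollary:genericw}), $\alpha_m,\gamma_m\neq 0$, and all other coefficients generic. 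Two elementary facts will be used throughout. First, since $x$ is self-adjoint for $\langle\cdot,\cdot\rangle$ we have $\ker(x^{j})^{\perp}=\im(x^{j})$ and $\bigl((x^{j})^{-1}(B)\bigr)^{\perp}=x^{j}(B^{\perp})$ for every subspace $B$; taking $B:=\C[x]v+X+W$, and noting $xB\subseteq B$ so that $B^{\perp}=(\C[x]v)^{\perp}\cap X^{\perp}\cap W^{\perp}$ is $x$-stable, the condition defining $l$ rephrases as the single membership $\mathtt{x}\in x^{l-1}(B^{\perp})+W$, and $l_2$ is the largest $l$ for which it holds. Second, because $\mu$ and $\nu$ are weakly decreasing, $\lambda_i=\mu_i+\nu_i\ge\mu_m+\nu_{m-1}$ for all $i<m$, whereas $\lambda_m=\mu_m+\nu_m<\mu_m+\nu_{m-1}$; in particular $v_{m,1}\notin\im(x^{\mu_m+\nu_{m-1}-1})$, hence $w\notin\im(x^{\mu_m+\nu_{m-1}-1})$.

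The inequality $l_2\ge\mu_m+\nu_{m-1}-1$ amounts to showing $\mathtt{x}\in x^{\mu_m+\nu_{m-1}-2}(B^{\perp})+W$, which I would do as in Proposition~\ref{proposition:genericl2}: $\mathtt{x}-(\gamma_m/\alpha_m)w$ has no $v_{m,\cdot}$-component, and by the displayed inequality its remaining monomials $v_{i,1},v_{i,\lambda_i}^{*}$ $(i<m)$ all lie in $\im(x^{\mu_m+\nu_{m-1}-2})$; choose preimages and correct them by suitable elements of the (ample) space $\ker(x^{\mu_m+\nu_{m-1}-2})$ to land in $(\C[x]v)^{\perp}\cap X^{\perp}\cap W^{\perp}$, the only pairings to be cancelled again involving $\lambda_i\ge\mu_m+\nu_{m-1}$ together with $\beta_m=\delta_m=0$. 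The complementary inequality $l_2\le\mu_m+\nu_{m-1}-1$ is the hard part. By the structural description (Lemmas~\ref{lemma:maxk2}--\ref{lemma:maxl2} and Proposition~\ref{proposition:D1k2}, which gives $k_2=\lambda_{m-1}$), $\Type(x,Y^{\perp}/(\C[x]v+Y))$ is obtained from $\sigma:=\Type(x,W^{\perp}/(\C[x]v+W))$ by deleting the bottom box of column $\lambda_{m-1}$ --- which decreases the part $\sigma_{2m-3}=\mu_{m-1}+\nu_{m-1}=\lambda_{m-1}$ --- and then the bottom box of column $l_2$; the two decreased parts being adjacent (as in the cases already treated), the second one is $\sigma_{2m-2}=\mu_m+\nu_{m-1}-1$ or $\sigma_{2m-4}=\mu_{m-1}+\nu_{m-2}$, and from $\mu_m\le\mu_{m-1}$ and $\nu_{m-1}\le\nu_{m-2}$ we get $\sigma_{2m-2}=\mu_m+\nu_{m-1}-1<\lambda_{m-1}\le\sigma_{2m-4}$. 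For $m=2$ the part $\sigma_{2m-4}$ is absent and we are done; for $m\ge 3$ it remains to rule it out, i.e.\ to show $l_2<\lambda_{m-1}$, equivalently to produce some $u\in W^{\perp}$ with $x^{\lambda_{m-1}-1}(u)\in B$ but $u\notin X^{\perp}$.

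The main obstacle is constructing that witness $u$. The naive candidates --- corrected preimages of $\alpha_m\mathtt{x}-\gamma_m w\in X+W$, or of an element $x^{k}v\in\C[x]v$ --- do not work, since all such vectors already lie in $X^{\perp}\cap W^{\perp}$ and a preimage can be chosen there as well, so $\langle\mathtt{x},u\rangle$ comes out $0$; the case $\lambda_m=1$ makes this vivid, as there $\im(x^{\lambda_{m-1}-1})\subseteq B$, the condition at $l=\lambda_{m-1}$ reads $W^{\perp}\subseteq X^{\perp}$, and the witness is just any $u\in W^{\perp}\setminus X^{\perp}$, not of ``preimage of a natural element'' form. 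The right idea is that $\dim B=\mu_1+2$ (generically) forces ``extra'' vectors into $B$ --- in particular $v_{m-1,\lambda_{m-1}}^{*}$, which lies in $\im(x^{\lambda_{m-1}-1})$ because $\mu_{m-1}\ge\mu_m$ --- and one takes $u$ to be a corrected $W^{\perp}$-preimage of such a vector; then $\langle\mathtt{x},u\rangle$ works out to a nonzero minor in the coefficients $\alpha_i,\gamma_i$, which are genuinely free since the only constraints on $w,\mathtt{x}$ are $\beta_m=\delta_m=0$, hence nonzero generically. Checking $v_{m-1,\lambda_{m-1}}^{*}\in B$ (perhaps with a short split on whether $\max\Gamma_m=m$), arranging the correction terms, and verifying the generic nonvanishing --- together with the parallel bookkeeping in the lower-bound step --- is routine index calculation of the kind done in Propositions~\ref{proposition:maxk2prim}--\ref{proposition:D1k2}, but it is where all the care must be spent, and I expect it to be the most time-consuming step.
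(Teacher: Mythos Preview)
Your lower bound is sound: the dual reformulation $\ttx\in x^{l-1}(B^{\perp})+W$ is correct, and subtracting $(\gamma_m/\alpha_m)w$ to kill the $v_{m,1}$-component is exactly the right move. The paper argues the same inequality in the primal form, but either way works.

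The upper bound, however, has a genuine gap in two places.

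First, the ``adjacency'' claim is circular. The statement that the two parts of $\sigma$ decreased in passing to $\Type(x,Y^{\perp}/(\C[x]v+Y))$ are consecutive is \emph{not} known a priori; the paper says explicitly after Lemma~\ref{lemma:maxl2} that ``we will see below that these are two consecutive parts.'' In every case treated, adjacency is a \emph{consequence} of computing $k_2$ and $l_2$ directly, not an input. So you cannot use it to narrow $l_2$ down to two values.

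Second, your proposed witness fails. You claim $v_{m-1,\lambda_{m-1}}^{*}\in B=\C[x]v+X+W$, but this is generically false. Since $\C[x]v$ has no $v^{*}$-components, the $v^{*}$-part of any element of $B$ lies in the two-dimensional span of $\sum_{i<m}\beta_i v_{i,\lambda_i}^{*}$ and $\sum_{i<m}\delta_i v_{i,\lambda_i}^{*}$. Forcing this to equal $v_{m-1,\lambda_{m-1}}^{*}$ imposes the $m-2$ homogeneous conditions $c_1\delta_i+c_2\beta_i=0$ for $i<m-1$, which for $m\ge 4$ generically force $c_1=c_2=0$; even for $m=3$ one must then also arrange the $v$-part to land in $\C[x]v$, which you have not checked. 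The slogan ``$\dim B=\mu_1+2$ forces extra vectors into $B$'' has no content here: $B$ contains exactly what it contains, and this particular basis vector is not in it.

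The paper's route for the upper bound is entirely different and more direct. It exhibits a single element of $B$ with an $x^{\mu_m+\nu_{m-1}-1}$-preimage in $W^{\perp}\setminus X^{\perp}$, namely
\[
u:=\alpha_m\,x^{\mu_m-1}v-w\in\C[x]v+W\subseteq B.
\]
The point of this combination is that the $v_{m,1}$-terms cancel, so $u$ is supported only on $v_{i,\cdot},v_{i,\cdot}^{*}$ with $i<m$; since $\lambda_i\ge\mu_m+\nu_{m-1}$ for all such $i$, a preimage $u'$ under $x^{\mu_m+\nu_{m-1}-1}$ exists. One then corrects $u'':=u'-\tfrac{\langle w,u'\rangle}{\alpha_m}v_{m,1}^{*}$ (using that $v_{m,1}^{*}\in\ker(x^{\mu_m+\nu_{m-1}-1})$ because $\lambda_m<\mu_m+\nu_{m-1}$) to land in $W^{\perp}$, and a short computation shows $\langle\ttx,u''\rangle\neq 0$ generically. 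This gives $l_2\le\mu_m+\nu_{m-1}-1$ outright, with no appeal to adjacency and no need to bound $l_2$ against $\lambda_{m-1}$ first.
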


\begin{proof}
 Consider the vector 
$$
u:=x^{\mu_m-1}(\alpha_m v)-w=\sum_{i=1}^{m-1}\alpha_m v_{i,\mu_i-\mu_m+1}-(\alpha_{i}v_{i,1} + \beta_{i}v_{i,\lambda_i}^{*}) \in \C[x]v+X+W.$$
 Observe that this vector is not supported on $v_{m,1}$.
We define 
$$u':= 
\sum_{i=1}^{m-1}\alpha_mv_{i,\mu_i+\nu_{m-1}}-(\alpha_{i}v_{i,\mu_m+\nu_{m-1}}+\beta_{i}v_{i,\lambda_i -\mu_m-\nu_{m-1}+1}^{*})
$$

and observe that $x^{\mu_m+\nu_{m-1}-1}(u')=u$. 
Now, we let
$$u'':=u'-\frac{\langle w,u' \rangle}{\alpha_m} v_{m,1}^{*}$$ 
such that $x^{\mu_m+\nu_{m-1}-1}(u'')=u$ and
$$ \langle w, u''\rangle=\langle w,u'\rangle-\frac{\langle w,u'\rangle}{\alpha_m} \langle w, v_{m,1}^*\rangle=\langle w,u'\rangle \left(1-\frac{\alpha_m}{\alpha_m}\right)=0$$
so $u'' \in W^{\perp}.$
Therefore $u'' \in (x^{\mu_m+\nu_{m-1}-1})^{-1}(\C[x]v+X+W)$. \vspace{5pt}

However, an easy calculation shows that 
$$
\langle \ttx,u'' \rangle =
\begin{cases}
\sum\limits_{i\in \Lambda_{m-1}}\alpha_m \delta_{i}-\gamma_m\beta_{i} 									&\text{if} \quad \mu_{m-1}>\mu_m, \\
\sum\limits_{i\in \Gamma_m \cap \Delta_{m-1}}     (\alpha_i-\alpha_m)\delta_i+(\gamma_m-\gamma_i)\beta_i		&\text{if} \quad \mu_{m-1}=\mu_m,
\end{cases}
$$
which is generically non-zero. Hence $u'' \not\in X^{\perp}$ and so $X^{\perp} \not\supseteq  (x^{\mu_m+\nu_{m-1}-1})^{-1}(\C[x]v+X+W) \cap W^{\perp}.$ \vspace{5pt}

It remains to show that $X^{\perp} \supseteq  (x^{\mu_m+\nu_{m-1}-2})^{-1}(\C[x]v+X+W) \cap W^{\perp}$. Since $\mu_m+\nu_{m-1}-2 \leq \mu_{m-1}+\nu_{m-1}-2$, the only vector in $\ker(x^{\mu_{m-1}+\nu_{m-1}-2})$ with non-zero inner product with $\ttx$ is $v_{m,1}^{*}$. However this vector is not contained in $W^{\perp}$ and so  $X^{\perp} \supseteq \ker(x^{\mu_{m-1}+\nu_{m-1}-2}) \cap W^{\perp}$.  Finally let $T:=\C(\{ v_{i,\lambda_i}, v_{i,1}^{*} \, | \, 1 \leq i \leq m-1\}\cup\{v_{m,1}^*\})$; this is the span of the basis elements that have non-zero inner product with $w$ and $\ttx$.
 Then any vector in $V$ that maps onto $u$ 
 under $x^{\mu_{m-1}+\nu_{m-1}-2}$ can easily be seen not to be supported on $T \cap W^{\perp}$ and so we have shown that $X^{\perp} \supseteq  (x^{\mu_m+\nu_{m-1}-2})^{-1}(\C[x]v+X+W) \cap W^{\perp}.$ Therefore $l_2=\mu_m+\nu_{m-1}-1$.
\end{proof}

\begin{proof}[Proof of Theorem \ref{theorem:etypeY}: Case 1(d)] We now explicitly describe which parts of $\sigma$ are removed to obtain $\sigma'$. Since $\nu_{m-1}>\nu_m$ the last two consecutive parts of $\sigma$ of sizes $k_2=\lambda_{m-1}=\mu_{m-1}+\nu_{m-1}$ and $l_2=\mu_{m}+\nu_{m-1}-1$ are $\sigma_{2(m-1)-1}$ and $\sigma_{2(m-1)}$. Therefore  $\sigma'$ is obtained from $\sigma$ by decreasing $\nu_{m-1}$ by $1$; explicitly we have
$$
\sigma'=(\mu_1+\nu_1, \mu_2+\nu_1, \ldots, \mu_{m-1}+\nu_{m-1}-1, \mu_{m}-2+\nu_{m-1},  \mu_{m}-1+\nu_{m}, \mu_{m+1}+\nu_{m},\ldots)
$$
and so
$$
\eType(v,x)\xrightarrow{\text{decrease $\mu_{m}$ by 1}}\eType(v+W,x_{|W^{\perp}/W})\xrightarrow{\text{decrease $\nu_{m-1}$ by 1}}\eType(v+Y,x_{|Y^{\perp}/Y}).
$$ \end{proof}

\subsection{Box Removed from the Right}

\subsubsection{ $\mu'= \mu, \nu' \preceq \nu$, $\max\Gamma_m = \max\Delta_m$ and $\nu_{m-1}=\nu_{m}\neq 0$}

Recall the condition $\nu_{m-1}=\nu_{m} \neq 0$ is equivalent to $|\Delta_{\leq m}| \geq 2$. Throughout this section, the maximal $l_2$ is $\lambda_m-1$ as given in Proposition \ref{proposition:genericl2}. 

\begin{proposition} \label{proposition:D2k2} 
Suppose $\eType(v+W, x_{|W^{\perp}/W})$ is obtained from $(\mu,\nu)$ by decreasing $\nu_m$ by $1$. Then the maximal $k$ such that $X \subseteq x^{k-1}(W^{\perp}) + \C[x]v + W$ is $\lambda_m$. 
\end{proposition}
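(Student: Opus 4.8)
The plan is to mirror the structure of Proposition~\ref{proposition:D1k2} and Proposition~\ref{proposition:trick}: show the lower bound $k_2 \geq \lambda_m$ first, and then show that $X$ fails to lie in $x^{\lambda_m}(W^\perp) + \C[x]v + W$. For the lower bound, recall from Proposition~\ref{proposition:maxk2prim} that in this case (where $\mu'=\mu$, $\nu_{m-1}=\nu_m$), we have $k_2 \geq \lambda_m - 1$; but now $\nu_m$ is the part being decreased, so by Proposition~\ref{proposition:genericw} the coefficient sum $\sum_{i \in \Delta_{\leq m}} \beta_i$ is \emph{nonzero}, and $\beta_m, \delta_m$ need not vanish. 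I expect the improved bound $k_2 \geq \lambda_m$ to come precisely from this: the set $\cS' = \{ v_{i,\lambda_i}, v_{i,1}^* \mid i \in \Lambda_m \}$ that maps onto the ``hard'' vectors under $x^{\lambda_m - 1}$ now \emph{is} reachable modulo $W$ (and $\C[x]v$), because adding $W$ — whose spanning vector genuinely involves the $v_{i,1}$ and $v_{i,\lambda_i}^*$ with $i\in\Lambda_m$ and nonvanishing coefficient combinations — exactly accounts for the one missing direction. Concretely I would exhibit, for each relevant basis vector, a preimage under $x^{\lambda_m - 1}$ lying in $W^\perp$, using linear combinations of the form $\beta_j v_{i,\lambda_i} - \beta_i v_{j,\lambda_j}$ (which lie in $W^\perp$ since the pairing with $w$ telescopes) together with the one extra vector recovered from $W$ itself and from $\C[x]v$ via $v' = \sum_{i\in\Delta_{\leq m}} v_{i,\mu_i - \mu_m + 1} = x^{\mu_m - \mu_{m} }(\text{something}) - \dots$ as in the proof of Proposition~\ref{proposition:trick}, but now noting that since $\sum_{i\in\Delta_{\leq m}}\beta_i \neq 0$, the vector $v'$ is \emph{not} in the span of the $\beta_m v_{i,\mu_i-\mu_m+1} - \beta_i v_{m,1}$, so the individual vectors $v_{i,1}$ can be separated out after adding $\C[x]v$.

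The upper bound — showing $X \not\subseteq x^{\lambda_m}(W^\perp) + \C[x]v + W$ — is where I expect the real work to be, and it should be the dual of the corresponding step in Proposition~\ref{proposition:D1k2}. The point is that $X \not\subseteq x^{\lambda_m}(V)$ since $X$ is a generic point of $\cB_{(\mu',\nu')}^{(\mu,\nu)}$ (it lies in $x^{\lambda_m - 1}(V)$ but not $x^{\lambda_m}(V)$), so the obstruction is really about whether the at most two extra dimensions contributed by $\C[x]v$ and $W$ can compensate. Since $|\Lambda_m| + $ (the relevant count) of basis vectors $v_{i,1}, v_{i,\lambda_i}^*$ are missing from $x^{\lambda_m}(W^\perp)$ and adding $\C[x]v + W$ raises the dimension by at most two, a dimension count (together with the observation that $\C[x]v$ and $W$ are ``generic'' relative to these directions) forces $X \not\subseteq x^{\lambda_m}(W^\perp) + \C[x]v + W$, giving $k_2 = \lambda_m$ exactly. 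I would make the genericity argument precise by checking that the specific linear functionals cutting out ``$X \subseteq \dots$'' are not identically zero on $\cB_{(\mu',\nu')}^{(\mu,\nu)} \times \cB_{(\mu',\nu')}^{(\mu,\nu)}$, exhibiting one pair $(W,X)$ where it fails.

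The main obstacle, as in the sibling propositions, will be bookkeeping: correctly identifying which basis vectors $v_{i,j}, v_{i,j}^*$ lie in $x^{\lambda_m}(W^\perp)$ versus $x^{\lambda_m}(V)$, tracking how $\C[x]v$ (spanned by $v, xv, \ldots$, i.e.\ by $\sum_i v_{i,\mu_i - r}$-type vectors) interacts with $W^\perp$, and handling the boundary subcases where some $\mu_i$ or $\nu_i$ vanish so that certain index sets degenerate. The condition $\max\Gamma_m = \max\Delta_m$ is what guarantees we are genuinely in the ``$\nu_m$ decreased'' regime rather than accidentally decreasing $\mu_m$, and I would use it to ensure that $\beta_m$ (equivalently the $v_{m,\lambda_m}^*$ direction) is genuinely active — contrast Remark~\ref{corollary:genericw}. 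Once $k_2 = \lambda_m$ and $l_2 = \lambda_m - 1$ are both in hand, combining them via Lemmas~\ref{lemma:maxk2} and~\ref{lemma:maxl2} will identify the two consecutive parts of $\sigma$ that get decreased, yielding Case 2(a) of Theorem~\ref{theorem:etypeY}: decreasing $\mu_m$ by $1$.
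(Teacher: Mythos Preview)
Your proposal is correct and follows essentially the same route as the paper: for the lower bound you use the $\beta$-combinations $\beta_m v_{i,\lambda_i}-\beta_i v_{m,\lambda_m}\in W^\perp$ together with $v'=\sum_{i\in\Delta_{\leq m}}v_{i,\mu_i-\mu_m+1}$ coming from $\C[x]v$, and the key observation that $\sum_{i\in\Delta_{\leq m}}\beta_i\neq 0$ makes $v'$ independent of those combinations so that the individual $v_{i,1}$ are recovered --- exactly as the paper does. One point you glossed over: to recover the dual vectors $v_{i,\lambda_i}^{*}$ for $i\in\Delta_m$, the paper uses a second family of combinations $\alpha_m v_{i,1}^{*}-\alpha_i v_{m,1}^{*}\in W^\perp$ (not just ``adding $W$''), whose images under $x^{\lambda_m-1}$ span a hyperplane in $\C\{v_{i,\lambda_i}^{*}\mid i\in\Delta_m\}$; adding $W$ then supplies the last direction. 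Your upper bound by dimension count is fine and is what the paper means by ``repeating the argument at the end of Proposition~\ref{proposition:trick}''.
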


\begin{proof}
In this case we are required to have $\sum_{i \in \Delta_{\leq m}} \beta_i \neq 0$ for otherwise  $\eType(v+W, x_{|W^{\perp}/W})$ would be obtained from $(\mu,\nu)$ by decreasing $\mu_m$ by $1$. It is clear that the vectors $v_{i,1}, v_{i,\lambda_{i}^{*}}$ are contained in $x^{\lambda_m-1}(W^{\perp})$ for all $i \not\in \Delta_m$ and that the vectors $v_{i,1}, v_{i,\lambda_{i}^{*}}$ are not contained in $x^{\lambda_m-1}(W^{\perp})$ for all $i \in \Delta_m \setminus \Gamma_m$. We show that $v_{i,1}$ are contained in $x^{\lambda_m-1}(W^{\perp})+\C[x]v$ for all $i \in \Delta_m$. \vspace{5pt}

As in Proposition \ref{proposition:trick}, we have $\beta_m v_{i,\lambda_i} - \beta_i v_{m,\lambda_m} \in W^{\perp}$ for $i \in \Delta_m$, and so 
$$
\beta_m v_{i,\mu_i - \mu_m+1} - \beta_i v_{m,1}=x^{\lambda_m-1}(\beta_{m} v_{i,\lambda_i} - \beta_{i}v_{m,\lambda_{m}})
$$ 
is contained in $x^{\lambda_m-1}(W^{\perp})$ for all $i \in \Delta_m$. In this case however, we must have that 
$$
v'=x^{\mu_m-1}(v)-\sum_{i=1}^{\min\Delta_m-1}v_{i,\mu_i-\mu_m+1}=\sum_{i \in \Delta_{\leq m}}v_{i,\mu_i-\mu_m+1}
$$ lies outside the span of the vectors $\beta_m v_{i,\mu_i - \mu_m+1} - \beta_i v_{m,1}$ for $i \in \Delta_{\leq m}$ since  $\sum_{i \in \Delta_{\leq m}} \beta_i \neq 0$. Therefore we have $\C\{v_{i,1} \, | \, i \in \Delta_m \}=\C\{\beta_m v_{i,\mu_i - \mu_m+1} - \beta_i v_{m,1}  \, | \, i \in \Delta_m\} \oplus \C v'$ and so we have 
$$U:=\C\{v_{i,1}, v_{j,\lambda_j}^{*} \, | \, 1 \leq i \leq m, \, 1 \leq j \leq \min \Delta_m-1 \} \subseteq x^{\lambda_{m}-1}(W^{\perp}) + \C[x]v.
$$ 
We now show that the vectors $v_{i,\lambda_i}^{*}$ for $i \in \Delta_m$ are contained in $x^{\lambda_{m}-1}(W^{\perp}) + \C[x]v+W$. We have that $\alpha_m v_{i,1}^{*} - \alpha_i v_{m,1}^{*} \in W^{\perp}$ and so 
$$
\alpha_m v_{i,\lambda_i-\lambda_m+1}^{*} - \alpha_i v_{m,\lambda_m}^{*}=x^{\lambda_{m}-1}(\alpha_m v_{i,1}^{*} - \alpha_i v_{m,1}^{*}) \in x^{\lambda_{m}-1}(W^{\perp}).
$$
Therefore we have that 
$$
U':=\C\{v_{k,1}, v_{j,\lambda_j}^{*}, \alpha_m v_{i,\lambda_i-\lambda_m+1}^{*} - \alpha_i v_{m,\lambda_m}^{*} \, | \, 1 \leq k \leq m, \, 1 \leq j \leq \min \Delta_m-1, \, i \in \Delta_m \}
$$
is contained in  $x^{\lambda_{m}-1}(W^{\perp}) + \C[x]v$. Hence $U'+W \subseteq x^{\lambda_{m}-1}(W^{\perp}) + \C[x]v + W$. Now since $W$ is generically chosen in $\cB_{(\mu',\nu')}^{(\mu,\nu)}$, we have that $w':=\sum_{i \in \Delta_m}\beta_iv_{i,\lambda_i}^{*}$, the part of $w$ that is not supported on $U$, is not contained in $\C\{\alpha_m v_{i,\lambda_i-\lambda_m+1}^{*} - \alpha_i v_{m,\lambda_m}^{*} \, | \, i \in \Delta_m\}$. Therefore by counting dimensions we have 
$$
\C\{\alpha_m v_{i,\lambda_i-\lambda_m+1}^{*} - \alpha_i v_{m,\lambda_m}^{*} \, | \, i \in \Delta_m\} \oplus \C w' = \C\{v_{i,\lambda_i}^{*}\, | \, i \in \Delta_m\}
$$ 
and so we have $ X \subseteq \C\{v_{i,1}, v_{i,\lambda_i}^{*}\, | \, 1 \leq i \leq m\}=U'+W \subseteq x^{\lambda_{m}-1}(W^{\perp}) + \C[x]v+W=x^{\lambda_{m}-1}(W^{\perp}) + \C[x]v$ since $W \subset x^{\lambda_m-1}(W^{\perp})$. \vspace{5pt}

By repeating the argument at the end of the proof in Proposition \ref{proposition:trick}, this shows that $k_2=\lambda_m$ is maximal and so we are done. \end{proof}

\begin{proof}[Proof of Theorem \ref{theorem:etypeY}: Case 2(a), $\nu_{m-1} = \nu_m \neq 0$] We now explicitly determine $\eType(v+Y, x_{|Y^{\perp}/Y})$ by determining $\sigma'=\Type(x,Y^{\perp}/(\C[x]v+Y))$. We have 
\begin{eqnarray*}
\sigma 	&=& \Type(x,W^{\perp}/(\C[x]v+W)) \\
		&=& (\mu_1+\nu_1,\mu_2+\nu_1,\ldots, \mu_{m}+\nu_{m-1},\mu_{m}+\nu_{m}-1, \mu_{m+1}+\nu_{m}-1,\mu_{m+1}+\nu_{m+1},\ldots)
\end{eqnarray*}
Since $\nu_{m-1}=\nu_{m}\neq 0$ we have $\nu_{m-1} >\nu_{m}-1$. Therefore the parts of $\sigma$ of size $k_2$ and $l_2$ are: 
$$
\sigma_{2(m-1)}=\mu_{m}+\nu_{m-1}=\lambda_m \quad \text{and} \quad \sigma_{2m-1}=\mu_{m}+\nu_{m}-1=\lambda_{m}-1.
$$
It follows that $\sigma'$ is obtained from $\sigma$ by decreasing $\mu_m$ by $1$. Thus we have 
$$
\eType(v,x)\xrightarrow{\text{decrease $\nu_{m}$ by 1}}\eType(v+W,x_{|W^{\perp}/W})\xrightarrow{\text{decrease $\mu_{m}$ by 1}}\eType(v+Y,x_{|Y^{\perp}/Y}).
$$ \end{proof}

\subsubsection{$\mu'= \mu, \nu' \preceq \nu$, $\max\Gamma_m = \max\Delta_m$ and $\nu_{m-1}>\nu_{m}\neq 0$}

 \begin{proposition} \label{proposition:k2delta1}
 Let $W$ and $X$ be as above, then the maximal $k_2$ such that $X \subseteq x^{k-1}(W^{\perp}) + \C[x]v + W$ is  $$k_2=\begin{cases} \mu_{m}+\nu_{m-1} &\text{if} \quad  m> 1, \\ \infty &\text{if} \quad m=1. \end{cases}$$
 \end{proposition}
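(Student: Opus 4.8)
The plan is to follow the template established in the previous cases (Propositions \ref{proposition:maxk2prim}, \ref{proposition:trick}, \ref{proposition:D1k2}, \ref{proposition:D2k2}), adapting the argument to the sub-case $\nu_{m-1} > \nu_m \neq 0$. First I would record the shape of the generic spanning vectors: since $\eType(v+W,x_{|W^\perp/W})$ is obtained from $(\mu,\nu)$ by decreasing $\nu_m$, Proposition \ref{proposition:genericw} gives $\sum_{i \in \Delta_{\leq m}} \beta_i \neq 0$, and since $\nu_{m-1} > \nu_m$ this sum is really over $\Delta_{\leq m} = \{m\}$ (when $\nu_m \neq 0$ and $\max\Gamma_m=\max\Delta_m=m$), so in particular $\beta_m \neq 0$. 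I would then treat the case $m=1$ separately: here $W = \mathbb{C}w$ with $w = \alpha_1 v_{1,1} + \beta_1 v_{1,\lambda_1}^*$ and $X$ similarly, and one checks directly that $X \subseteq x^{k-1}(W^\perp) + \mathbb{C}[x]v + W$ for all $k$ (there is no obstruction because there are no vectors $v_{i,1}, v_{i,\lambda_i}^*$ with $i < m$ to worry about, and $\mathbb{C}[x]v + W$ already contains everything relevant), which is exactly the statement $k_2 = \infty$.

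For the main case $m > 1$, the strategy is to show (i) $X \subseteq x^{\mu_m + \nu_{m-1} - 1}(W^\perp) + \mathbb{C}[x]v + W$ and (ii) $X \not\subseteq x^{\mu_m+\nu_{m-1}}(W^\perp) + \mathbb{C}[x]v + W$. For (i), I would exhibit, for each basis vector appearing in the expansion of a generic element of $X$, a preimage under $x^{\mu_m+\nu_{m-1}-1}$ lying in $W^\perp$ modulo $\mathbb{C}[x]v + W$; the key vectors to handle are $v_{i,1}$ and $v_{i,\lambda_i}^*$ for $i \leq m$. Vectors indexed by $i$ with $\lambda_i \geq \mu_m + \nu_{m-1}$ (in particular all $i < m$ with $\lambda_i$ large, and the interaction with $\nu_{m-1}$) are in the image of $x^{\mu_m+\nu_{m-1}-1}$ outright; the delicate ones are the $v_{m,1}$ and $v_{m,\lambda_m}^*$ directions and the combinations forced by $W^\perp$. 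Here I would reuse the trick from Proposition \ref{proposition:D1l2}: form a vector $u' = \sum_{i} \alpha_m v_{i,\mu_i+\nu_{m-1}} - (\alpha_i v_{i,\mu_m+\nu_{m-1}} + \beta_i v_{i,\lambda_i-\mu_m-\nu_{m-1}+1}^*)$ type expression, correct it by a multiple of $v_{m,1}^*$ to land in $W^\perp$, and observe it maps down to something in $\mathbb{C}[x]v + W$. For (ii), I would identify the basis vectors (among $v_{i,1}$, $v_{i,\lambda_i}^*$ for $i \in \Lambda_m$ or an appropriate index set around $m$) that fail to lie in $x^{\mu_m+\nu_{m-1}}(W^\perp)$ and argue, as in the earlier propositions, that adding the one-dimensional space $W$ and the cyclic space $\mathbb{C}[x]v$ cannot recover all of them simultaneously — the dimension count only allows one extra direction, but at least two independent directions are missing once $\nu_{m-1} > \nu_m$.

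The main obstacle, as in the preceding propositions, will be the bookkeeping in step (ii): carefully pinning down exactly which basis vectors are missing from $x^{\mu_m+\nu_{m-1}}(W^\perp)$ and verifying that $\mathbb{C}[x]v$ contributes only the single "diagonal" combination $v' = x^{\mu_m-1}(v) - \sum_{i < \min\Delta_m} v_{i,\mu_i-\mu_m+1} = \sum_{i \in \Delta_{\leq m}} v_{i,\mu_i-\mu_m+1}$, which (since $\beta_m \neq 0$ and $|\Delta_{\leq m}| = 1$ here) does not span the same line as the $W^\perp$-forced combinations. The asymmetry with Proposition \ref{proposition:D2k2} (where $|\Delta_{\leq m}| \geq 2$ gave $k_2 = \lambda_m$) is precisely that now $\nu_{m-1} > \nu_m$ shortens the relevant column, pushing the answer down to $\mu_m + \nu_{m-1}$; I would make sure the proof isolates where that inequality is used. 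I expect the argument to parallel Proposition \ref{proposition:D1l2} closely enough that most of it can be written by citing the relevant computations there with the roles of $k_2$ and $l_2$ interchanged.
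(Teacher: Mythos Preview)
Your overall plan --- separate the case $m=1$, and for $m>1$ establish the two inclusions $X\subseteq x^{\mu_m+\nu_{m-1}-1}(W^\perp)+\C[x]v+W$ and $X\not\subseteq x^{\mu_m+\nu_{m-1}}(W^\perp)+\C[x]v+W$ --- is exactly the paper's structure, and your handling of $m=1$ is right: since $\max\Gamma_1=\max\Delta_1=1$ one has $v_{1,1}=x^{\mu_1-1}(v)$, so $X\subseteq\C\{v_{1,1},v_{1,\lambda_1}^*\}\subseteq\C[x]v+W$ and $k_2=\infty$.

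The gap is in your choice of tool for step~(i) when $m>1$. The $u'/u''$ construction from Proposition~\ref{proposition:D1l2} manufactures a \emph{single} vector in $W^\perp$ whose image under $x^{\mu_m+\nu_{m-1}-1}$ lands in $\C[x]v+W$; that is the right shape for an $l_2$ argument (exhibiting one element of a preimage), but it does not place the $2m$-dimensional span $\C\{v_{i,1},v_{i,\lambda_i}^*\mid i\le m\}\supseteq X$ inside $x^{\mu_m+\nu_{m-1}-1}(W^\perp)+\C[x]v+W$. What is needed is a preimage for \emph{each} relevant basis direction, and the correct template is the pivot technique of Proposition~\ref{proposition:D2k2}, not Proposition~\ref{proposition:D1l2}: for each $i\in\Delta_{m-1}$ the vectors $\beta_m v_{i,\lambda_i}-\beta_i v_{m,\lambda_m}$ and $\alpha_m v_{i,1}^*-\alpha_i v_{m,1}^*$ lie in $W^\perp$, and applying $x^{\mu_m+\nu_{m-1}-1}$ (which annihilates the $v_{m,\cdot}$ pieces because $\lambda_m=\mu_m+\nu_m<\mu_m+\nu_{m-1}$) yields $\beta_m v_{i,\mu_i-\mu_m+1}$ and $\alpha_m v_{i,\lambda_i}^*$ individually. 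Then $x^{\mu_m-1}(v)=\sum_{i\le m}v_{i,\mu_i-\mu_m+1}$ isolates $v_{m,1}$ modulo $\C[x]v$, and finally adding $W$ recovers $v_{m,\lambda_m}^*$. Your step~(ii) is also too coarse: the obstruction is not a generic dimension count but the specific fact that for $i\in\Delta_{m-1}$ the vector $v_{i,\mu_i-\mu_m+1}$ has unique preimage $v_{i,\lambda_i}$ under $x^{\mu_m+\nu_{m-1}-1}$, and this drops out of $x^{\mu_m+\nu_{m-1}}(V)$ altogether --- so at the next level the subtraction with $x^{\mu_m-1}(v)$ can no longer isolate $v_{m,1}$.
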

 \begin{proof}
We first deal with the case $m=1$. In this case, the condition $\nu_{m-1}>\nu_{m}\neq 0$ should be ignored.
Then our vectors $w$ and $\ttx$ have the form $$w=\alpha v_{11} + \beta v_{1,\lambda_1}^{*} \quad \text{and} \quad \ttx = \gamma v_{11} + \delta v_{1,\lambda_1}^{*}.$$ Moreover, since  
 $\max\Gamma_m = \max\Delta_m$, we have $v_{11}=x^{\mu_1-1}(v)$ and so it follows that $\ttx \subset \{x^{\mu_1-1}(v), w\} \subset \C[x]v + W.$ Therefore we set $k_2=\infty$. \vspace{5pt}

Now suppose that $m >1$. Clearly the vectors $v_{i,1}$ and $v_{i,\lambda_i}^{*}$ are all contained in $x^{\mu_m+\nu_{m-1}-1}(W^{\perp})$ for $1 \leq i \leq \min \Lambda_{m-1}-1$. We now show that the vectors $v_{i,\mu_i+\mu_m+1}, v_{i, \lambda_i}^{*}$ are contained in $x^{\mu_m+\nu_{m-1}-1}(W^{\perp})$ for $i \in \Delta_{m-1}$. Using the vectors $v_{m,1}$ and $v_{m,\lambda_m}^{*}$ as pivots we see that the vectors $\beta_{m}v_{i,\lambda_i}-\beta_{i}v_{m,\lambda_m}$ and $\alpha_{m}v_{i,1}^{*}-\alpha_{i}v_{m,1}^{*}$ are contained in $W^{\perp}$ for $i \in \Delta_{m-1}$. Hence 
 $$
 \beta_{m}v_{i,\mu_i-\mu_m+1}=x^{\mu_m+\nu_{m-1}-1}(\beta_{m}v_{i,\lambda_i}-\beta_{i}v_{m,\lambda_m}) \in x^{\mu_m+\nu_{m-1}-1}(W^{\perp})
 $$
 and 
 $$
\alpha_{m}v_{i,\lambda_i}^{*}=x^{\mu_m+\nu_{m-1}-1}(\alpha_{m}v_{i,1}^{*}-\alpha_{i}v_{m,1}^{*})\in x^{\mu_m+\nu_{m-1}-1}(W^{\perp})
$$ for $i \in \Delta_{m-1}.$ Now $x^{\mu_m-1}(v)=\sum_{i=1}^{m}v_{i,\mu_i-\mu_m+1}$ and since $v_{i,\mu_i-\mu_m+1} \in x^{\mu_m+\nu_{m-1}-1}(W^{\perp})$ for all $1 \leq i \leq m-1$, we conclude that $v_{m,1} \in x^{\mu_m+\nu_{m-1}-1}(W^{\perp}) + \C[x]v$ as well. Hence 
\begin{equation} \label{eqn:addW}
\C\{v_{i,1},v_{i, \lambda_{i}}^{*} \, | \, 1 \leq i \leq m-1\} \oplus \C\{v_{m,1}\} \subset x^{\mu_m+\nu_{m-1}-1}(W^{\perp}) + \C[x]v.
\end{equation}
Now adding $W$ to each side of \eqref{eqn:addW}, we see that 
$$
\C\{v_{i,1},v_{i, \lambda_{i}}^{*} \, | \, 1 \leq i \leq m\} = \C\{v_{i,1},v_{i, \lambda_{i}}^{*} \, | \, 1 \leq i \leq m-1\} \oplus \C\{v_{m,1}\}+W \subset x^{\mu_m+\nu_{m-1}-1}(W^{\perp}) + \C[x]v +W, 
$$
and so $X \subseteq x^{\mu_m+\nu_{m-1}-1}(W^{\perp}) + \C[x]v + W$. Therefore $k_2 \geq \mu_m+\nu_{m-1}$. But it is also clear that $\mu_m+\nu_{m-1}-1$ is the highest power of $x$ such that $v_{i,\mu_i-\mu_m+1}$ is contained in $x^{\mu_m+\nu_{m-1}-1}(W^{\perp})$ for $i \in \Delta_{m-1}$, and without these vectors, we cannot obtain the vector $v_{m,1} \in x^{\mu_m+\nu_{m-1}-1}(W^{\perp}) + \C[x]v$. Hence $k_2 = \mu_m+\nu_{m-1}$ and we are done. 
 \end{proof}
 
\begin{proof}[Proof of Theorem \ref{theorem:etypeY}: Case 2(a), $\nu_{m-1}>\nu_m \neq 0$] We now explicitly determine how $\sigma'$ is determined from $\sigma$. Suppose that $m >1$. By Propositions \ref{proposition:genericl2} and \ref{proposition:k2delta1} we have that $l_2=\lambda_m-1=\mu_m+\nu_m-1$ and $k_2=\mu_m+\nu_{m-1}$. Since $\nu_{m-1} > \nu_{m}$ we have 
$$
\sigma_{2(m-1)}=k_{2}=\mu_m+\nu_{m-1} > \mu_{m}+\nu_{m}-1=l_2=\sigma_{2m-1},
$$ 
so again $\sigma'$ is obtained from $\sigma$ by decreasing $\mu_m$ by $1$. Thus we have 
$$
\eType(v,x)\xrightarrow{\text{decrease $\nu_{m}$ by 1}}\eType(v+W,x_{|W^{\perp}/W})\xrightarrow{\text{decrease $\mu_{m}$ by 1}}\eType(v+Y,x_{|Y^{\perp}/Y}).
$$
In terms of the bumping algorithm, this corresponds to the case where the number moves across the wall from the right to the left, but remains on the same row. \vspace{5pt}

Now suppose that $m=1$. Then Propositions \ref{proposition:genericl2} and \ref{proposition:k2delta1} tells us that $\sigma'$ is obtained by reducing the last part of $\sigma$ of size $\lambda_1-1$ by $1$.
Since $\max \Gamma_1=\max \Delta_1$, this part must be $\sigma_1=\lambda_m-1=\mu_1+\nu_1-1$. Therefore $\sigma'$ is obtained from $\sigma$ by decreasing $\mu_1$ by $1$ and so in this case we have: 
$$
\eType(v,x)\xrightarrow{\text{decrease $\nu_{1}$ by 1}}\eType(v+W,x_{|W^{\perp}/W})\xrightarrow{\text{decrease $\mu_{1}$ by 1}}\eType(v+Y,x_{|Y^{\perp}/Y}).
$$
In terms of the bumping algorithm, this corresponds to the case where the number moves across the wall from the row on the right to the first row left. \end{proof}

\subsubsection{ $\mu'= \mu, \nu' \preceq \nu$ and either $\max\Gamma_m > \max \Delta_m$ or $\mu_m=0$.} 
We may assume without loss of generality that $m=\max \Delta_m$. If $\max \Gamma_m > \max \Delta_m$, then there is no restriction on the $\beta_i$; however if $\max \Gamma_m =  \max \Delta_m$, then $\sum_{i \in \Delta_m} \beta_i \neq 0$. Also, recall that in the case of $\mu_m=0$ (or equivalently that $\ell(\mu) < m$), then by the overarching assumptions that we are in the setting of Proposition \ref{proposition:perpendicular1}, we have $\nu_m \geq 2$. 

\begin{proposition} \label{proposition:k2d0}
Suppose $\eType(v+W, x_{|W^{\perp}/W})$ is obtained from $(\mu,\nu)$ be decreasing $\nu_m$ by $1$. Then the maximal $k_2$ such that $X \subseteq x^{k_2-1}(W^{\perp}) + \C[x]v + W$ is $\lambda_m-1$. 
\end{proposition}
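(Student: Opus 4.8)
The plan is to mirror the argument structure of Proposition~\ref{proposition:D2k2}, adapting it to the situation $\max\Gamma_m > \max\Delta_m$ or $\mu_m=0$ where the condition $\sum_{i\in\Delta_{\leq m}}\beta_i\neq 0$ is either automatic (when $\max\Gamma_m = \max\Delta_m$ still applies after reducing) or simply not required. By Proposition~\ref{proposition:maxk2prim} we already know $k_2\geq \lambda_m-1$, since we are in the case $\mu'=\mu$, $\nu'\preceq\nu$ and we have assumed $\nu_{m-1}=\nu_m$ is \emph{not} the relevant subcase here — rather the present subsection is the one where the extra row with equal $\nu$-value above is absent, so Proposition~\ref{proposition:maxk2prim} gives exactly $k_2\geq\lambda_m-1$. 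So the entire content of the proof is to show the reverse inequality $X\not\subseteq x^{\lambda_m-1}(W^\perp)+\C[x]v+W$, i.e. that $k_2 = \lambda_m-1$ exactly.

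First I would set up the generic spanning vectors $w=\sum_{i=1}^m\alpha_i v_{i,1}+\beta_i v_{i,\lambda_i}^*$ and $\ttx=\sum_{i=1}^m\gamma_i v_{i,1}+\delta_i v_{i,\lambda_i}^*$, recalling that in this subsection the relevant genericity constraint is $\sum_{i\in\Delta_{\leq m}}\beta_i\neq 0$ when $\max\Gamma_m=\max\Delta_m$ (and no constraint otherwise). The key point to identify is which basis vectors of the associated graded piece $x^{\lambda_m-1}(V)$ fail to lie in $x^{\lambda_m-1}(W^\perp)$: as in the proofs of Propositions~\ref{proposition:trick} and \ref{proposition:D2k2}, the obstructions come from the indices $i\in\Lambda_m$, and the span of images under $x^{\lambda_m-1}$ of vectors in $W^\perp$ supported on rows indexed by $\Lambda_m$ has codimension one inside $\C\{v_{i,1},v_{i,\lambda_i}^*\mid i\in\Lambda_m\}$ (one constraint because $W$ contributes one linear relation). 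Then I would show that adding $\C[x]v$ recovers at most the vector $v'=\sum_{i\in\Delta_{\leq m}}v_{i,\mu_i-\mu_m+1}$ (the ``diagonal'' combination visible in $x^{\mu_m-1}(v)$ up to lower rows), and that adding $W$ recovers at most one further vector. The arithmetic then has to confirm that these recoveries are \emph{not} enough: specifically the vector $v'$ lies inside — not outside — the span already accounted for, precisely because $\sum_{i\in\Delta_{\leq m}}\beta_i\neq 0$ is \emph{false} here when $\max\Gamma_m>\max\Delta_m$ (there is genuinely no $\beta$-constraint, so no cancellation forces $v'$ into the bad span), hence some $v_{i,\lambda_i}^*$ with $i\in\Lambda_m$ genuinely escapes $x^{\lambda_m-1}(W^\perp)+\C[x]v+W$; and when $\mu_m=0$ the vectors $v_{i,1}$ for $i\in\Lambda_m$ simply do not arise from $\C[x]v$ at all since $v$ is not supported on those rows beyond depth $\mu_i$. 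Finally I would invoke the dimension-counting argument at the end of the proof of Proposition~\ref{proposition:trick} verbatim to conclude $X\not\subseteq x^{\lambda_m}(W^\perp)+\C[x]v+W$, giving $k_2=\lambda_m-1$.

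The main obstacle I expect is bookkeeping the case split cleanly: the two hypotheses $\max\Gamma_m>\max\Delta_m$ and $\mu_m=0$ need slightly different justifications for why the ``missing'' basis vectors cannot be recovered, and within the first one must be careful about whether $\mu_m\geq 1$ (so that $v$ actually touches row $m$ on the $\mu$-side, and $v_{m,1}=x^{\mu_m-1}$ applied to something) versus the degenerate end behavior. The cleanest route is probably to reduce both to the single statement that the only linear combination of $\{v_{i,1}\mid i\in\Lambda_m\}\cup\{v_{i,\lambda_i}^*\mid i\in\Lambda_m\}$ obtainable inside $x^{\lambda_m-1}(W^\perp)+\C[x]v$ has codimension at least one inside the full $2|\Lambda_m|$-dimensional space it would need, so that even after adding the one-dimensional $W$ we fall short — and here the hypothesis of the subsection is exactly what prevents the extra cancellation that would have closed the gap in the $\nu_{m-1}=\nu_m$ case. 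I would then note (as in the companion propositions) that the maximal-$l_2$ computation of Proposition~\ref{proposition:genericl2} already handles the other Jordan-type invariant, so this proposition plus that one will feed into the proof of Theorem~\ref{theorem:etypeY}, Case~2(b),(c).
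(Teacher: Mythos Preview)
Your proposal has a genuine gap in the $\max\Gamma_m>\max\Delta_m$ case, and the confusion stems from misidentifying the mechanism.

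First, a framing issue: you describe this subsection as ``the one where the extra row with equal $\nu$-value above is absent'', but the case distinction here is not $\nu_{m-1}$ versus $\nu_m$; it is $\max\Gamma_m$ versus $\max\Delta_m$ (with $m=\max\Delta_m$ assumed). The hypothesis $\max\Gamma_m>m$ means there exist rows $i>m$ with $\mu_i=\mu_m$ but $\nu_i<\nu_m$, hence $\lambda_i<\lambda_m$.

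Second, your $v'$-span argument is inverted. You write that ``$\sum_{i\in\Delta_{\leq m}}\beta_i\neq 0$ is \emph{false} here \ldots\ so no cancellation forces $v'$ into the bad span''. But there is no $\beta$-constraint in this case, so generically $\sum\beta_i\neq 0$; and in Proposition~\ref{proposition:D2k2} it was precisely $\sum\beta_i\neq 0$ that placed $v'$ \emph{outside} the span and pushed $k_2$ up to $\lambda_m$. Running your sketch literally would therefore prove $k_2=\lambda_m$, the opposite of what is claimed.

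The actual obstruction is different and simpler. Since $\max\Gamma_m>m$, the vector $x^{\mu_m-1}(v)=\sum_{i:\mu_i\geq\mu_m}v_{i,\mu_i-\mu_m+1}$ is supported on rows $m+1,\ldots,\max\Gamma_m$ as well. For those $i>m$ we have $\lambda_i<\lambda_m$, so $v_{i,1}$ lies outside $x^{\lambda_m-1}(V)$ entirely and cannot be cancelled by anything in $x^{\lambda_m-1}(W^\perp)$. Thus adding $\C[x]v$ contributes only the ``polluted'' sum $\sum_{i=\min\Delta_m}^{\max\Gamma_m}v_{i,1}$ modulo $x^{\lambda_m-1}(W^\perp)$, from which one cannot extract any individual $v_{i,1}$ with $i\in\Lambda_m$. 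Consequently $X\subseteq x^{k-1}(W^\perp)+\C[x]v+W$ if and only if $X\subseteq x^{k-1}(W^\perp)+W$, and Proposition~\ref{proposition:maxk2prim} then gives $k_2=\lambda_m-1$ directly. Your treatment of the $\mu_m=0$ case is essentially correct and matches the paper: $v$ is not supported on row $m$ at all, so again $\C[x]v$ contributes nothing relevant.
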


\begin{proof}
In this case there is no restriction on the $\beta_i$ and Proposition \ref{proposition:maxk2prim} applies to tell us that $k_2 \geq \lambda_m-1$. First suppose that $\max \Gamma_m >m$. Then adding the space $\C[x]v$ will only result in adding the $\sum_{i=\min \Delta_m}^{\max \Gamma_m} v_{i,1}$ and thus we cannot obtain the individual $v_{i,1}$ for $i \in \Delta_m$. Hence  $X \subseteq x^{k-1}(W^{\perp}) + \C[x]v + W$ if and only if  $X \subseteq x^{k-1}(W^{\perp})$ and so $k_2=\lambda_m-1$. \vspace{5pt}

Similarly if we suppose that $\mu_m=0$, which is equivalent to $\ell(\mu) < m$, then it is clear that the vectors $v_{i,1}$ for $\ell(\mu)+1\leq i \leq m$ are contained in $x^{k-1}(W^{\perp}) + \C[x]v$ if and only if they are contained in $x^{k-1}(W^{\perp})$. This implies that the vectors  $v_{i,1}, v_{i,\lambda_i}^{*}$ for $\ell(\mu)+1\leq i \leq m$ are contained in $x^{k-1}(W^{\perp}) + \C[x]v+W$ if and only if they are contained in $x^{k-1}(W^{\perp})$ and so again it follows that $k_2=\lambda_m-1$. 
\end{proof}
\begin{proof}[Proof of Theorem \ref{theorem:etypeY}: Case 2(b), (c), (d)] \hspace{5mm}

\textbf{Case 2(b):} First suppose that $\nu_{m}-1 > \nu_{m+1}$. In the case where $\max \Gamma_m > \max \Delta_m$, we have $\mu_{m+1}=\mu_{m}$ and so $\max \Gamma_m=\max \Gamma_{m+1}$ and in the case where $\mu_m=0$ we again have $\mu_m=\mu_{m+1}$. By Propositions  \ref{proposition:genericl2} and \ref{proposition:k2d0} we have $k_2=l_2=\lambda_m-1$ and the last two parts $\sigma$ of this size are
$$
\sigma_{2m-1}=\mu_{m}+\nu_{m}-1 \quad \text{and} \quad \sigma_{2m}=\mu_{m+1}+\nu_{m}-1.
$$
Therefore it follows that $\sigma'$ is obtained from $\sigma$ by decreasing $\nu_{m}'=\nu_{m}-1$ by $1$. Thus we have 
$$
\eType(v,x)\xrightarrow{\text{decrease $\nu_{m}$ by 1}}\eType(v+W,x_{|W^{\perp}/W})\xrightarrow{\text{decrease $\nu_{m}'$ by 1}}\eType(v+Y,x_{|Y^{\perp}/Y}).
$$
In terms of the bumping algorithm, this corresponds to the number being removed from the right, moving one space to the right and remaining on the same row. \vspace{5pt}

Now suppose that $\nu_{m}-1=\nu_{m+1}$. Then $\mu_{m}+\nu_{m}-1=\mu_{m+1}+\nu_{m+1}=\lambda_{m+1}$ and so this further splits into two cases:
\begin{itemize}
\item \textbf{Case 2(c):} Suppose that $\max \Gamma_m > \max \Delta_{m+1}$ or $\mu_m=0$. Then the last two parts of $\sigma$ of size $\lambda_{m+1}$ are 
$$
\sigma_{2\max \Delta_{m+1}-1}=\mu_{\max \Delta_{m+1}}+\nu_{\max \Delta_{m+1}} \quad \text{and} \quad \sigma_{2\max \Delta_{m+1}}=\mu_{\max \Delta_{m+1}+1}+\nu_{\max \Delta_{m+1}},
$$
and so $\sigma'$ is obtained from $\sigma$ by decreasing $\nu_{\max \Delta_{m+1}}$ by $1$.  Note in the case where $\mu_m=0$ we have $\mu_{m+1}=0$ as well and so $\mu_{\max \Delta_{m+1}}=\mu_{\max \Delta_{m+1}+1}=0$.  This corresponds to the number moving from the right, moving down possibly several rows but remaining on the right. We have 
$$
\eType(v,x)\xrightarrow{\text{decrease $\nu_{m}$ by 1}}\eType(v+W,x_{|W^{\perp}/W})\xrightarrow{\text{decrease $\nu_{\max \Delta_{m+1}}$ by 1}} \eType(v+Y,x_{|Y^{\perp}/Y}).
$$
In terms of the bumping algorithm, this corresponds to a number moving from the right of the wall and remaining on the right of the wall, but moving down possibly many rows. 
\item \textbf{Case 2(d):} Now suppose that $\max \Gamma_m \leq \max \Delta_{m+1}$ or $\nu_{m+1}=0$. Then the last two parts of $\sigma$ of size $\lambda_m-1$ are: 
$$
\sigma_{2(\max \Gamma_m-1)}=\mu_{\max \Gamma_{m}}+\nu_{\max \Gamma_m-1} \quad \text{and} \quad \sigma_{2\max \Gamma_m}=\mu_{\max \Gamma_m}+\nu_{\max \Gamma_{m}}.
$$
Note that it is possible for $\nu_{\max \Gamma_{m}}=0$ but $\nu_{\max \Gamma_m-1}\neq 0$. This is certainly the case when $\nu_{m+1}$ and $\max \Gamma_m-1=\max \Delta_m=m$. In any case we obtain $\sigma'$ from $\sigma$ be decreasing $\mu_{\max \Gamma_m}$ by $1$. We have 
$$
\eType(v,x)\xrightarrow{\text{decrease $\nu_{m}$ by 1}}\eType(v+W,x_{|W^{\perp}/W})\xrightarrow{\text{decrease $\mu_{\max \Gamma_m}$ by 1}}\eType(v+Y,x_{|Y^{\perp}/Y}).
$$
 This corresponds to the number moving across the wall, moving down possibly several rows and resting on the left. 
\end{itemize} \end{proof}

\section{Proof of Main Theorem} \label{section:algorithmconstruction}
\subsection{Setup}
In this subsection we go over the notation needed in the proof of Theorem \ref{theorem:main}. 

\begin{definition}\label{def:Phi} Given $(v, x) \in \mO_{(\mu,\nu)}$, we say that a flag $F_{\bullet} \in \cC_{(v,x)}$ is `good' if the sequence of bi-partitions $\Phi(F_\bullet)$ below is a nested sequence (i.e. one box is removed at each stage): $$\Phi(F_\bullet)=( \eType(v,x), \eType(v+F_1, x|_{F_1^{\perp}/F_1}), \eType(v+F_2, x|_{F_2^{\perp}/F_2}), \cdots ).$$
Notice that by Theorem \ref{theorem:irreduciblecomponents} the set of good flags is open dense in $\cC_{(v,x)}$. \end{definition}
\begin{definition} Let $F_{\bullet}, G_{\bullet} \in \cC_{(v,x)}$ be two good flags: 
\begin{eqnarray*}
F_{\bullet}&=&(0 \subset F_1 \subset F_2 \subset \ldots \subset F_{2}^{\perp} \subset F_{1}^{\perp} \subset \C^{2n}), \\
G_{\bullet}&=&(0 \subset G_1 \subset G_2 \subset \ldots \subset G_{2}^{\perp} \subset G_{1}^{\perp} \subset \C^{2n}), 
\end{eqnarray*}
We define two flags in the smaller exotic fibre $\cC_{(\overline{v},\overline{x})}$ where $\overline{x}=x|_{G_1^\perp /G_1}$ and $\overline{v}=v+G_1\in G_1^\perp/G_1$. 
\begin{eqnarray}
\widetilde{F_{\bullet}}&=& \left( \ldots \subset \frac{G_1+F_{i} \cap G_{1}^{\perp}}{G_1} \subset \ldots \right)_{i=0}^{2n}, \\
\widetilde{G_{\bullet}}&=&(0 = G_1/G_1 \subset G_2/G_1 \subset \ldots \subset G_{2}^{\perp}/G_1 \subset G_{1}^{\perp}/G_1 \cong \C^{2(n-1)}). 
\end{eqnarray}
Notice that the flag $\widetilde{F_{\bullet}}$ will have redundancies in two places, i.e. there are two numbers $k$ such that $\widetilde{F}_k=\widetilde{F}_{k+1}$.

If $F_n \subset G_1^{\perp}$ (or, equivalently, if $F_n\supset G_1$) we call this a Type 1 redundancy. Let $r$ be minimal such that $F_{n+r}\not\subset G_{1}^{\perp}$. It follows then that $F_{n+r-1}=F_{n+r} \cap G_{1}^{\perp}$ and also that $G_1+F_{n-r}=F_{n-r+1}$.  In this case, the flag $\widetilde{F_{\bullet}}$ looks like: 
\begin{footnotesize}$$\widetilde{F_{\bullet}}=\left( 0 \subset \frac{G_1+F_1}{G_1} \subset \ldots \subset \frac{G_1+F_{n-r-1}}{G_1} \subset \frac{F_{n-r+1}}{G_1} \subset \ldots \subset \frac{F_{n+r-1}}{G_1} \subset \frac{F_{n+r+1}\cap G_{1}^{\perp}}{G_1} \subset \ldots \subset  \frac{G_{1}^{\perp}}{G_1}\right).$$ 
\end{footnotesize}
In this flag the indices $n+r$ and $n-r$ are missing. 

If $F_n \not\subset G_1^{\perp}$ ($F_n\not\supset G_1$),
we call this a Type 2 redundancy. Let $r$ be minimal such that $F_{n+r}\supset G_1$, we have $F_{n+r-1}+G_1=F_{n+r}$ and $F_{n-r-1}=F_{n-r} \cap G_{1}^{\perp}$, and: 
\begin{scriptsize}
\begin{displaymath}
\widetilde{F_{\bullet}}=\left( 0 \subset  \frac{G_1+F_{1}}{G_1} \subset \ldots \subset  \frac{G_1+F_{n-r-1}}{G_1} \subset  \frac{G_1+F_{n-r+1} \cap G_{1}^{\perp}}{G_1} \subset \ldots \subset  \frac{G_1+F_{n+r-1} \cap G_{1}^{\perp}}{G_1} \subset  \frac{F_{n+r+1} \cap G_{1}^{\perp}}{G_1} \subset \ldots \subset \frac{G_{1}^{\perp}}{G_1}\right).
\end{displaymath} 
\end{scriptsize}
Again the indices $n+r$ and $n-r$ are missing from the labelling. 

In both cases, since $F_\bullet$ was a good flag, so is $\widetilde{F}_\bullet$. Keeping track of the redundancies in the labelling, we can then say that $\Phi(\widetilde{F}_\bullet)$ is a bitableau that satisfies the increasing (standard) condition, containing the numbers $1,\ldots,n$ excluding $r$. 
\end{definition}

Let $F_\bullet\in\cC_{(v,x)}$ be a good flag. By Theorem \ref{theorem:irreduciblecomponents},  for any $s$ with $1 \leq s \leq n-1$ we have a bijective map 
\begin{equation} \label{equation:restrictionmap}
\Phi_{s}: \Irr \cC_{(v+G_{n-s}, x_{|G_{n+s}/G_{n-s}})} \longrightarrow \cT(\epsilon^{(s)},\eta^{(s)})
\end{equation}
where $(\epsilon^{(s)},\eta^{(s)})=\eType(v+G_{n-s}, x_{|G_{n+s}/G_{n-s}})$.
\begin{definition}We define the $s$-\emph{truncated flag} $$F_{\bullet}^{s}:=\left(0=\frac{F_{n-s}}{F_{n-s}} \subset \frac{F_{n-s+1}}{F_{n-s}} \subset \ldots \subset \frac{F_{n+s-1}}{F_{n-s}}  \subset \frac{F_{n+s}}{F_{n-s}} \cong \C^{2s} \right)$$ We have that $F_\bullet^s\in\cC_{(v+G_{n-s}, x_{|G_{n+s}/G_{n-s}})}$ is a good flag, and  $\Phi_{s}(F_{\bullet}^{s})=T_s$, the bitableau obtained from $T$ by only considering the number $1$ up to $s$ (see Definition \ref{def:truncT}). \end{definition}

\begin{definition}Let $F_\bullet$, $G_\bullet\in\cC_{(v,x)}$ be two good flags, and set $\Phi(F_{\bullet})=T$ and $\Phi(G_{\bullet})=R$. Under the map $\Phi_{n-1}$, we have $\Phi_{n-1}(\widetilde{G_{\bullet}})=R_{n-1}$. Define $\widetilde{T}$ to be the standard bitableau obtained by relabelling the entries of $\Phi_{n-1}(\widetilde{F_{\bullet}})$ with numbers from $1$ up to $n$ with $r$ missing, as discussed above. 
Similarly, we define the flag $\widetilde{F_{\bullet}^{s}}$ and the tableau $\widetilde{T}_s$ and we have $\Phi_{n-s+1}(\widetilde{F_{\bullet}^{s}})=\widetilde{T}_s$. \end{definition}

\begin{lemma} \label{lemma:r=n}
Suppose that $r=n$ in the index determining the redundancies of the flag $\widetilde{F}_{\bullet}$. Then $\widetilde{F}_{\bullet} \cong F_{\bullet}^{n-1}$. 
\end{lemma}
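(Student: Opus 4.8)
The plan is to unwind the two definitions involved---the $(n-1)$-truncated flag $F_\bullet^{n-1}$ and the flag $\widetilde{F_\bullet}$---and check that the hypothesis $r=n$ forces the redundancy bookkeeping to produce exactly the same chain of subspaces. First I would recall that $F_\bullet^{n-1}$ is, by definition, the flag
$$\Bigl(0=\tfrac{F_1}{F_1}\subset \tfrac{F_2}{F_1}\subset\cdots\subset\tfrac{F_{2n-1}}{F_1}\subset\tfrac{F_{2n}}{F_1}\cong\C^{2(n-1)}\Bigr),$$
living in $\cC_{(v+G_{1}',x|_{G'})}$ where in the truncated notation $G_1$ plays the role of $F_1$ (here $s=n-1$ so $G_{n-s}=G_1$); but the lemma is about the flag $\widetilde{F_\bullet}$, which is built using $G_1$, not $F_1$, as the quotient. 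So the actual content is: when $r=n$, the subspaces $\tfrac{G_1+F_i\cap G_1^\perp}{G_1}$ coincide (after accounting for the two missing indices) with $\tfrac{F_{i+1}\cap G_1^\perp}{G_1}$ shifted appropriately, i.e. with the flag $F_\bullet^{n-1}$ computed relative to $G_1$.

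Next I would split into the two redundancy types. In the Type 1 case ($F_n\subset G_1^\perp$), the index $r$ is minimal with $F_{n+r}\not\subset G_1^\perp$; if $r=n$ this means $F_{2n-1}\subset G_1^\perp$ but $F_{2n}=\C^{2n}\not\subset G_1^\perp$, which is automatic, so the hypothesis says the \emph{entire} flag $F_\bullet$ up to step $2n-1$ lies inside $G_1^\perp$. Then for every $i\le 2n-1$ we have $F_i\cap G_1^\perp=F_i$, and since $G_1\subset F_n\subset F_i$ for $i\ge n$ while for $i<n$ the relation $G_1+F_{n-r}=F_{n-r+1}$ with $r=n$ gives $G_1+F_0=F_1$, i.e. $G_1=F_1$. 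Wait---that is the key simplification: $r=n$ in Type 1 forces $G_1=F_1$ (from $G_1+F_{n-r}=F_{n-r+1}$ read at $r=n$). Once $G_1=F_1$, the formula for $\widetilde{F_\bullet}$ collapses term by term to $\tfrac{F_1+F_i}{F_1}=\tfrac{F_i}{F_1}$ for the surviving indices, which is exactly $F_\bullet^{n-1}$; the two missing indices $n+r=2n$ and $n-r=0$ are precisely the trivially-redundant ends, so no information is lost. In the Type 2 case ($F_n\not\subset G_1^\perp$), $r$ is minimal with $F_{n+r}\supset G_1$; $r=n$ says $F_{2n}\supset G_1$ (automatic) but $F_{2n-1}\not\supset G_1$, and the relation $F_{n-r-1}=F_{n-r}\cap G_1^\perp$ at $r=n$ reads $F_{-1}=F_0\cap G_1^\perp$, vacuous, so instead I would use $F_{n+r-1}+G_1=F_{n+r}$ at $r=n$: $F_{2n-1}+G_1=F_{2n}$. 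This, combined with $\dim$ count, identifies $G_1$ as a line transverse to $F_{2n-1}$; symplectically $F_{2n-1}=F_1^\perp$, so $G_1\not\subset F_1^\perp$, meaning $\langle G_1,F_1\rangle\neq 0$. I would then check directly that $\tfrac{G_1+F_{i+1}\cap G_1^\perp}{G_1}$ equals $\tfrac{F_{i}}{F_1}$ under the natural isomorphism $G_1^\perp/G_1\cong$ (complement of $F_1$) $\cong F_1^\perp/F_1$, using that intersecting with $G_1^\perp$ and adding $G_1$ is, on this transverse configuration, the same operation as the passage $F_\bullet\leadsto F_\bullet^{n-1}$.

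The main obstacle I expect is the Type 2 case: there $G_1\neq F_1$ and the isomorphism $G_1^\perp/G_1\cong F_1^\perp/F_1$ is not the identity, so "$\cong$" in the lemma statement has to be made precise and one must verify that the flag-comparison respects it. The cleanest way around this is probably to observe that both $\widetilde{F_\bullet}$ (in Type 2 with $r=n$) and $F_\bullet^{n-1}$ are good flags in fibres over the same exotic nilpotent (up to the canonical isomorphism), that they have the same image under $\Phi$ (both equal $\widetilde T=T_{n-1}$, since $r=n$ means the removed number is $n$ so $\widetilde T$ is literally the truncation $T_{n-1}$), and---since the identification of a good flag by its $\Phi$-image plus the explicit subspace description is rigid at each step---conclude the two flags agree stepwise. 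I would lean on the fact that the subspace $G_1+F_i\cap G_1^\perp$ and $F_i$ have the same dimension $i$ (one checks this using minimality of $r$ and $r=n$, so that no jump is skipped on the relevant range) and that both contain/are-contained-in the appropriate adjacent terms, which pins them down uniquely given they sit inside the flag variety of the symplectic space $G_1^\perp/G_1$. The verification that dimensions match and that no index-shift error creeps in---the flag $\widetilde{F_\bullet}$ omits indices $2n$ and $0$ while $F_\bullet^{n-1}$ is indexed $0$ through $2(n-1)$---is the routine but bookkeeping-heavy part I would carry out carefully but not belabor here.
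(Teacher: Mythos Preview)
Your approach is essentially the paper's, and your Type~1 analysis matches it exactly: reading $G_1+F_{n-r}=F_{n-r+1}$ at $r=n$ gives $G_1=F_1$, after which every term of $\widetilde{F}_\bullet$ is literally $F_i/F_1$.

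For Type~2 you have also isolated the key fact---$G_1\not\subset F_{2n-1}=F_1^\perp$, hence $\langle G_1,F_1\rangle\neq 0$---but you are making what follows harder than it is. The detour through $\Phi$-images and rigidity of good flags is unnecessary. The paper simply observes that $F_1\cap G_1^\perp=0$ (as $F_1$ is a line not contained in $G_1^\perp$) and that $G_1\cap F_i=0$ for $i\le 2n-1$ (since $r=n$ means $G_1\not\subset F_i$), and then writes the direct chain
\[
\frac{G_1+F_i\cap G_1^\perp}{G_1}\;\cong\;F_i\cap G_1^\perp\;\cong\;\frac{(F_i\cap G_1^\perp)+F_1}{F_1}\;=\;\frac{F_i}{F_1},
\]
where the two isomorphisms come from the trivial intersections just noted, and the final equality holds because $\dim(F_i\cap G_1^\perp)=i-1$ (as $F_1\subset F_i$ forces $F_i\not\subset G_1^\perp$). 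The ambient isomorphism $G_1^\perp/G_1\cong F_1^\perp/F_1$ you were worried about making precise is exactly what this chain gives at $i=2n-1$, and it carries the flag term by term. So there is no obstacle in Type~2; your instinct that the transversality $\langle G_1,F_1\rangle\neq 0$ is the point was correct, and it finishes the job immediately.
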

\begin{proof}
In the case of the Type 1 redundancy, we have $F_{i} \subseteq G_{1}^{\perp}$ for all $1 \leq i \leq 2n-1$, which implies that $F_{2n-1}=G_1^\perp$ and $F_1=G_1$. Therefore each term in $\widetilde{F_{\bullet}}$ has the form $F_{i}/F_{1}$ except for the last term which is $(\C^{2n}\cap G_{1}^{\perp})/G_1=G_{1}^{\perp}/G_1$ and so   $\widetilde{F}_{\bullet} = F_{\bullet}^{n-1}$. \vspace{5pt}

In the case of the Type 2 redundancy, we have $G_1 \not \subseteq F_{i}$ for $1 \leq i \leq 2n-1$. Therefore $F_1 \cap G_{1}^{\perp}=0$ and so the first non-zero term in $\widetilde{F_{\bullet}}$ is $\frac{G_1+F_2 \cap G_{1}^{\perp}}{G_1}\cong F_2 \cap G_{1}^{\perp} \cong F_{2}/F_{1}$ and every subsequent term is isomorphic to $$\frac{G_1+F_i \cap G_{1}^{\perp}}{G_1} \cong F_i \cap G_{1}^{\perp} \cong \frac{F_i \cap G_{1}^{\perp}+F_1}{F_1}\cong F_{i}/F_{1}$$ except for the last term which is $(\C^{2n} \cap G_{1}^{\perp})/G_1=G_{1}^{\perp}/G_1$. So in this case we also have $\widetilde{F}_{\bullet} \cong F_{\bullet}^{n-1}$.
\end{proof}

\subsection{Inductive step}

We now use the techniques developed by Steinberg in \cite{Ste} to determine the exotic Robinson-Schensted algorithm; this section is devoted to the proof of the following key fact (analogous to Lemma 1.2 from \cite{Ste}), which is the inductive step in the proof. 

\begin{definition} We say that two points $F_{\bullet}, G_{\bullet} \in \mathcal{C}_{(v,x)}$ are ``compatible'' if $F_{\bullet}, G_{\bullet}, \widetilde{F}_{\bullet}$ and $\widetilde{G}_{\bullet}$ are all good flags.
 \end{definition}

\begin{proposition} \label{proposition:mainlemma}
Let $F_{\bullet}$ and $G_{\bullet}$ be two compatible points in the exotic Springer fibre $\cC_{(v,x)}$. Let $T$ and $R$ be the bitableaux corresponding to  $F_{\bullet}$ and $G_{\bullet}$ and let $\widetilde{T}$ and $R_{n-1}$ be the bitableaux corresponding to $\widetilde{F}_{\bullet}$ and $\widetilde{G}_{\bullet}$. Then the pair $(\widetilde{T}, R_{n-1})$ is obtained from $(T,R)$ by the first step of the reverse bumping algorithm described in Section \ref{section:bumpingalgorithm}. 
\end{proposition}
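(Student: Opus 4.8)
The plan is to unwind the definitions of $\widetilde{F}_\bullet$ and $\widetilde{G}_\bullet$ and to match the combinatorics of their associated bitableaux to the first pass of the reverse bumping algorithm, using the $\eType$-restriction results of Section~\ref{section:generichyperplanes} (in particular Theorem~\ref{theorem:etypeY}) as the engine that computes what happens at each step. First I would set $k=n$, locate the box of $R$ containing $n$, let $s$ be the entry of $T$ in that box, and let $m\in\mathscr{R}$ be its row; I then observe that, since $G_\bullet$ is good, removing $G_1$ and passing to $\widetilde{G}_\bullet$ removes exactly the box containing $n$, so $\Phi(\widetilde{G}_\bullet)=R_{n-1}$ is immediate. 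The content is entirely in showing $\Phi(\widetilde{F}_\bullet)$, once relabelled, equals $\widetilde{T}$ as produced by steps (3)--(6) of the algorithm.

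The key steps, in order: (i) Identify the redundancy type of $\widetilde{F}_\bullet$ (Type 1 versus Type 2 in the sense of the Setup section) with the distinction $m=1$ versus $m\neq 1$ and, within $m\neq 1$, with the case analysis of Theorem~\ref{theorem:etypeY}; roughly, $F_n\supset G_1$ (Type 1) should correspond to the case where $s$ lands on the left column / stays put, and $F_n\not\supset G_1$ (Type 2) to the case where $s$ must move. (ii) Read off, using the explicit form of $\widetilde{F}_\bullet$ displayed in the Setup, the two indices $n\pm r$ that are ``missing'' from the flag; these encode which two boxes of $T$ differ from the corresponding two boxes of $\widetilde{T}$, and hence which entry gets bumped and where. (iii) Apply Theorem~\ref{theorem:etypeY} at the relevant stage: with $W=G_1$ and $X$ a generic line in the appropriate $\cB^{(\mu,\nu)}_{(\mu',\nu')}$, the theorem says precisely which part of which partition decreases when we pass from $\eType(v+F_i,\,x|_{F_i^\perp/F_i})$ to $\eType(v+\widetilde F_i,\dots)$, and this must be translated into ``the available position in the smallest row $\ge m-1$'' of step (6). (iv) Check the terminal behaviour: when $m=1$ set $w(n)=s$; when there is no available position in rows $\ge m-1$ set $w(n)=\bar s$; these correspond, respectively, to the algorithm reaching row $1$ (Type 1 with $r=n$, handled by Lemma~\ref{lemma:r=n}, giving $\widetilde F_\bullet\cong F_\bullet^{n-1}$) and to the ``no more bumping'' case of Theorem~\ref{theorem:etypeY} parts such as 1(a). (v) Finally, assemble the per-stage statements into the claim that the whole first pass of reverse bumping transforms $(T,R)$ into $(\widetilde T,R_{n-1})$, using that the relabelling ``numbers $1,\dots,n$ with $r$ missing'' in the Setup matches the algorithm's bookkeeping.

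The main obstacle I expect is step (iii): faithfully matching the combinatorial dictionary in Remark~\ref{remark:43} to the geometric data. Theorem~\ref{theorem:etypeY} is stated in terms of inequalities among $\mu_i,\nu_i$ and the indices $\max\Gamma_m,\max\Delta_m$, while the algorithm is stated in terms of ``first available position in row $\ge m-1$''; one must verify that these describe the same box in every one of the roughly eight sub-cases (1)(a)--(d) and (2)(a)--(d), and crucially that the two consecutive parts of $\sigma$ that get decreased (Propositions~\ref{proposition:trick}, \ref{proposition:D1k2}, \ref{proposition:D1l2}, \ref{proposition:D2k2}, \ref{proposition:k2delta1}, \ref{proposition:k2d0}) correspond to exactly the box that the algorithm bumps out of and the box it bumps into. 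A secondary technical point is genericity: one must argue that for generic compatible $F_\bullet,G_\bullet$ the line $G_1$ and the ``next'' generic line in $\widetilde F_\bullet$ are generic in the relevant $\cB$-variety so that the hypotheses of Theorem~\ref{theorem:etypeY} (including $X\subset W^\perp$, via Proposition~\ref{proposition:perpendicular1}) genuinely apply, and that this genericity is preserved as we iterate down the flag. I would handle this by noting that ``compatible'' already forces $\widetilde F_\bullet$ to be good, and invoking openness of the relevant conditions.
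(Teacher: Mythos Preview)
Your high-level plan---use Theorem~\ref{theorem:etypeY} as the engine and match its case analysis to the algorithm via Remark~\ref{remark:43}---is the paper's approach. But several of your identifications are off in ways that would derail the argument.

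First, the setup for Theorem~\ref{theorem:etypeY} is not ``$W=G_1$ and $X$ a generic line''. The theorem is applied inside the \emph{truncated} symplectic space $F_{n+s}/F_{n-s}$, with $W=(F_{n-s}+G_1)/F_{n-s}$ the image of $G_1$ there and $X=F_{n-s+1}/F_{n-s}$ the next step of the $F$-flag. The crucial point you do not articulate is that \emph{both} $W$ and $X$ lie in the \emph{same} variety $\cB^{(\mu^{(s)},\nu^{(s)})}_{(\mu',\nu')}$: quotienting by $X$ gives the shape of $T_{s-1}$, and one must argue that quotienting by $W$ gives the shape of $\widetilde{T}_s$, which is the \emph{same} shape. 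Only then does Theorem~\ref{theorem:etypeY} apply, and its output is $\eType$ at $Y^\perp/Y$ with $Y=W+X$, which is the shape of $\widetilde{T}_{s-1}$; comparing this to the shape of $\widetilde{T}_s$ tells you where $s$ lands. The theorem does \emph{not} directly compare $\eType(v+F_i,\dots)$ to $\eType(v+\widetilde F_i,\dots)$ as you suggest.

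Second, your terminal dichotomy is wrong. Case~1(a) of Theorem~\ref{theorem:etypeY} is \emph{not} a termination case---it is the bumping case where $s$ moves one box along its own row on the left. Termination with $w(n)=\bar s$ occurs exactly when $X\not\subset W^\perp$, i.e.\ the cases of Corollary~\ref{corollary:notperp} ($\lambda_m=\mu_m=1$ with $\nu_{m-1}=0$, or $\lambda_m=\nu_m=1$), which are precisely the hypotheses \emph{excluded} from Theorem~\ref{theorem:etypeY}. Termination with $w(n)=s$ occurs when $m=1$, via Corollary~\ref{corollary:uniquegeneric} (then $W=X$). Relatedly, the Type~1/Type~2 redundancy is determined by how the bumping chain \emph{ends} (at $r$), not by the starting row $m$; Type~1 corresponds to the chain reaching row~$1$ (output unbarred), Type~2 to the ``no available position'' exit (output barred).

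Finally, you are missing the ingredient that pins down the numbers outside the bumping chain: one needs the analogue of Lemma~\ref{lemma:rlessthans} (numbers $1,\dots,r-1$ occupy the same positions in $\widetilde T$ as in $T$, proved by a direct isomorphism of truncated flags), together with the observation in the proof of Proposition~\ref{proposition:mainlemma} that Theorem~\ref{theorem:etypeY} forces uniqueness of the displacement chain, so no other number moves.
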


As in the Steinberg proof, at the first stage of the algorithm, some number $s \in T$, which is in the same position as $n \in R$ moves and causes a cascade which eventually bumps a number $r$ off. We need to determine where this number $s$ lies in the new bitableau $\widetilde{T}$. We break the steps of the Steinberg proof into smaller lemmas. 

\begin{lemma} \label{lemma:rlessthans}
The numbers $1, \ldots, r-1$ occupy the same positions in $\widetilde{T}$ as they do in $T$.  
\end{lemma}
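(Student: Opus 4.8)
The plan is to track the construction of $\widetilde{F_\bullet}$ carefully and show that the first $r-1$ steps of $\Phi(\widetilde{F_\bullet})$ agree with those of $\Phi(F_\bullet)$. Recall that the numbers $1, \ldots, r-1$ label the boxes added in passing from $\eType(v, x)$ through the first $r-1$ terms of the nested sequence $\Phi(F_\bullet^{n-1})$ (using that $\widetilde{T}$ is obtained by relabelling $\Phi_{n-1}(\widetilde{F_\bullet})$). Since $r$ is by definition the minimal index at which the relevant redundancy occurs — in the Type 1 case, $r$ is minimal with $F_{n+r} \not\subset G_1^\perp$, and in the Type 2 case, $r$ is minimal with $F_{n+r} \supset G_1$ — for all $i < r$ the terms $\frac{G_1 + F_{n\pm i} \cap G_1^\perp}{G_1}$ of $\widetilde{F_\bullet}$ are built from the $F_{n\pm i}$ without any index being skipped yet.

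First I would make precise, in each redundancy type separately, what the $s$-truncated flag $\widetilde{F_\bullet^s}$ looks like for $s \leq r-1$, and observe that for such $s$ there is a canonical isomorphism identifying $\widetilde{F_\bullet^{s}}$ with the corresponding truncation of $F_\bullet^{n-1}$ compatible with the exotic structure (symplectic form, the vector $v$, and the nilpotent $x$). Concretely: in the Type 1 case $F_{n+i} \subset G_1^\perp$ for $i < r$, so $F_{n+i} \cap G_1^\perp = F_{n+i}$ and the subquotients match up directly; in the Type 2 case $G_1 \not\subset F_{n+i}$ for $i < r$, and one uses $F_{n-i-1} = F_{n-i} \cap G_1^\perp$ to see that the subquotients $\frac{G_1 + F_{n\pm i} \cap G_1^\perp}{G_1}$ are isomorphic (as triples $(v, x, \langle \cdot, \cdot\rangle)$-data) to the corresponding subquotients of $F_\bullet^{n-1}$, exactly as in the computations performed in the proof of Lemma \ref{lemma:r=n}. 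The point is that nothing has been ``skipped'' in the labelling before index $r$, so the first $r-1$ entries of the nested sequence $\Phi(\widetilde{F_\bullet})$ are literally the first $r-1$ entries of $\Phi(F_\bullet^{n-1})$.

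Then I would invoke the compatibility hypothesis: since both $F_\bullet$ and $\widetilde{F_\bullet}$ are good flags, $\Phi(\widetilde{F_\bullet})$ is a genuine nested sequence of bipartitions (a standard bitableau), and likewise $\Phi(F_\bullet^{n-1}) = T_{n-1}$ agrees with $T$ on the numbers $1, \ldots, n-1$ by the truncated-flag discussion preceding Lemma \ref{lemma:r=n}. Combining with the previous paragraph, the boxes labelled $1, \ldots, r-1$ are added in the same order and hence occupy the same positions in $\widetilde{T}$ as in $T$, which is the claim. One subtlety to handle carefully is the bookkeeping of the shift in indices: in $\widetilde{F_\bullet}$ the index $n-r$ (and $n+r$) is missing, but since we only look at $i \leq r-1 < r$, the relevant part of the flag lies strictly below index $n+r$, so the shift does not yet affect anything; this is the only place a reader might worry, and it is exactly where I would be most careful.

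The main obstacle is not conceptual but careful combinatorial bookkeeping: one must verify in both redundancy types that the subquotient isomorphisms $\frac{G_1 + F_{n \pm i} \cap G_1^\perp}{G_1} \cong F_{n\pm i}/F_{n-i}$ (after the appropriate normalization) are compatible with the data $(v, x)$ defining the exotic type, so that $\eType$ is genuinely unchanged at each of the first $r-1$ stages. I expect this to be a direct extension of the argument already given for Lemma \ref{lemma:r=n} (which is the $r = n$ case), and the minimality of $r$ is precisely what guarantees no index collision interferes below stage $r$.
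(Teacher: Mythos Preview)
Your approach is essentially the paper's: reduce to showing that the truncated flags $F_\bullet^k$ and $\widetilde{F_\bullet^k}$ are naturally isomorphic for every $k<r$, treating the two redundancy types separately and using minimality of $r$ to ensure no index has yet been skipped.

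One concrete correction in the Type 2 case: your claimed identity $F_{n-i-1}=F_{n-i}\cap G_1^\perp$ is false for $i<r-1$. Minimality of $r$ gives $F_{n-j}\not\subset G_1^\perp$ for every $0\le j\le r-1$, so $F_{n-i}\cap G_1^\perp$ is a codimension-one subspace of $F_{n-i}$ which does \emph{not} contain $F_{n-i-1}$ (except when $i=r-1$). The paper avoids this by arguing directly that the composite $F_{n\pm k'}\cap G_1^\perp \hookrightarrow F_{n\pm k'}\twoheadrightarrow F_{n\pm k'}/F_{n-k}$ has kernel $F_{n-k}\cap G_1^\perp$ and is surjective (the latter because $F_{n-k}\not\subset G_1^\perp$), yielding $\dfrac{F_{n\pm k'}\cap G_1^\perp}{F_{n-k}\cap G_1^\perp}\cong F_{n\pm k'}/F_{n-k}$ without your identity. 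With this fix your outline goes through, and the compatibility with $(v,x,\langle\cdot,\cdot\rangle)$ that you flag is indeed automatic since these isomorphisms are induced by inclusion and quotient maps.
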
 

\begin{proof}
It suffices to show that for $k < r$, the flags $F_{\bullet}^{k}$ and $\widetilde{F_{\bullet}^{k}}$ are naturally isomorphic.
The flag $F_{\bullet}^{k}$ consists of spaces of the form $F_{n \pm k'}/F_{n-k}$, where $k' \leq k$. Suppose first that we have a Type 1 redundancy. Then the spaces in the flag $\widetilde{F_{\bullet}^{k}}$ have the form $$\frac{F_{n \pm k'}}{G_1} \bigg/ \frac{F_{n - k}}{G_1} \cong F_{n \pm k'}/F_{n-k},$$ where $k' \leq k$ and so $F_{\bullet}^{k}$ and $\widetilde{F_{\bullet}^{k}}$ map to the same bitableau. \vspace{10pt}

Now suppose that we are in a Type 2 redundancy. Then a subspace in $\widetilde{F_{\bullet}^{k}}$ has the form $$\frac{G_1 + F_{n \pm k'} \cap G_{1}^{\perp}}{G_1} \bigg/ \frac{G_1 + F_{n-k} \cap G_{1}^{\perp}}{G_1} \cong \frac{G_1 + F_{n \pm k'} \cap G_{1}^{\perp}}{G_1 + F_{n - k} \cap G_{1}^{\perp}},$$ where $k' \leq k$. Now since $G_{1}$ is not contained in $F_{n-k}$ we have $$\frac{G_1 + F_{n \pm k'} \cap G_{1}^{\perp}}{G_1 + F_{n - k} \cap G_{1}^{\perp}} \cong \frac{F_{n \pm k'} \cap G_{1}^{\perp}}{F_{n - k} \cap G_{1}^{\perp}}.$$ Also, there is a natural map $F_{n \pm k'} \cap G_{1}^{\perp} \longrightarrow F_{n \pm k'}/F_{n-k}$, which is the composition of the natural inclusion $F_{n \pm k'} \cap G_{1}^{\perp} \hookrightarrow F_{n \pm k'}$ and the natural projection $F_{n \pm k'} \rightarrow F_{n \pm k'}/F_{n-k}$, whose kernel is $F_{n-k} \cap G_{1}^{\perp}$. Therefore $$\frac{F_{n \pm k'} \cap G_{1}^{\perp}}{F_{n - k} \cap G_{1}^{\perp}} \cong F_{n \pm k'}/F_{n-k},$$ and so again, $F_{\bullet}^{k}$ and $\widetilde{F_{\bullet}^{k}}$ map to the same bitableau. 
\end{proof}

From now on we may assume that $s \geq r$. In fact, if $s=r$ then a special case of Lemma \ref{lemma:r=n} tells us that $\widetilde{T}_{s}=T_{s-1}$, which we will emphasize below. 

\begin{lemma} \label{lemma:rs}
With $s \geq r$ as above we have:
\begin{enumerate}[(1)]
\item $s=r$ if and only if $s$ is in the $(m,1)$-th position of $T_{s}$ with either 
\begin{enumerate}[(a)]
\item $m=1$; or 
\item $m>1$ and either $\mu_{m}^{(s)}=1$ and $\nu_{m-1}^{(s)}=0$, or $\nu_{m}^{(s)}=1$ and $\mu_{m}^{(s)}=0$.
\end{enumerate}
\item When $s>r$, $s$ moves from $T_s$ into $\widetilde{T_s}$, where it displaces a smaller number. 
\end{enumerate}
\end{lemma}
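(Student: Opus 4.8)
The plan is to read off both statements from the behaviour of the exotic Jordan types along the truncated flags $F_\bullet^s$ and $\widetilde{F_\bullet^s}$, feeding the relevant pairs of lines into the results of Section \ref{section:generichyperplanes} (this is the exotic analogue of Lemma~1.2 of \cite{Ste}). Since $G_\bullet$ is a good flag, $G_1=\mathbb{C}w$ is a generic point of $\cB_{(\mu',\nu')}^{(\mu,\nu)}$, where $(\mu',\nu')$ is the shape of $R_{n-1}$ and the box $(\mu,\nu)\smallsetminus(\mu',\nu')$ occupies the position $p$ that $n$ has in $R$; writing $m$ for the (partition) row of $p$, we have $\mu'_m=\mu_m-1$ if $p$ is left of the wall and $\nu'_m=\nu_m-1$ if it is right of it. First I would record that $s$ is the entry of $T_s$ in position $p$, and that, $s$ being the largest entry of $T_s$, the box $p$ is a corner of the shape $(\mu^{(s)},\nu^{(s)})$ of $T_s$; consequently the first nonzero term $\overline X:=F_{n-s+1}/F_{n-s}$ of $F_\bullet^s$ and the image $\overline W$ of $G_1$ in $M_s:=F_{n+s}/F_{n-s}$ are both generic points of the variety $\cB$ attached to removing the box $p$ from $(\mu^{(s)},\nu^{(s)})$. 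That $G_1$ descends to a line in $M_s$ for $s\geq r$ (i.e.\ $G_1\subseteq F_{n+s}$ but $G_1\not\subseteq F_{n-s}$) is exactly where the definition of the redundancy index $r$ is used, and by Lemmas \ref{lemma:maxk2} and \ref{lemma:maxl2} the flag $\widetilde{F_\bullet^s}$ is precisely the reduction of $F_\bullet^s$ by this line.

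With this dictionary, part (1) becomes the statement that $\widetilde{F_\bullet^s}\cong F_\bullet^{s-1}$ --- equivalently, by Lemma \ref{lemma:rlessthans}, that $r=s$ --- if and only if the listed position conditions hold. For the backward implication: if $m=1$ (which, $s$ being maximal in $T_s$, forces $p$ to be the outer corner of the first left row, apart from the trivial case $s=1$), Corollary \ref{corollary:uniquegeneric} forces $\overline X=\overline W$, so the special case of Lemma \ref{lemma:r=n} applied to $F_\bullet^s$ gives $\widetilde{F_\bullet^s}\cong F_\bullet^{s-1}$. If $m>1$ and $p$ realises one of the configurations of (1)(b), then by Corollary \ref{corollary:notperp} we have $\overline X\not\subseteq\overline W^{\perp}$, so $\overline X+\overline W$ is a non-isotropic plane which cannot be a term of a symplectic flag, and again the reduction is forced to collapse onto $F_\bullet^{s-1}$. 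Conversely, if $m>1$ and neither configuration of (1)(b) holds, then $\overline X\subseteq\overline W^{\perp}$ (the complementary case of Corollary \ref{corollary:notperp}, or Proposition \ref{proposition:perpendicular1}), and generically $\overline X\neq\overline W$, so $\overline Y:=\overline X+\overline W$ is an honest two-dimensional isotropic subspace; Theorem \ref{theorem:etypeY} then shows $\eType(\overline v_s+\overline Y,\overline x_s|_{\overline Y^{\perp}/\overline Y})$ is strictly smaller than the exotic type obtained from $(\mu^{(s)},\nu^{(s)})$ by removing $p$, so $\widetilde T_s\neq T_{s-1}$ and hence $r<s$.

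For part (2), assume $s>r$. By the previous paragraph $m>1$, $\overline X\subseteq\overline W^{\perp}$ and $\overline X\neq\overline W$, so Theorem \ref{theorem:etypeY} applies --- Case 1 if $p$ lies left of the wall, Case 2 if it lies right of it. In each branch of that theorem, $\eType(\overline v_s+\overline Y,\overline x_s|_{\overline Y^{\perp}/\overline Y})$ is obtained from $(\mu^{(s)},\nu^{(s)})$ by deleting a box strictly further inward along its row, or further down, than $p$; by Remark \ref{remark:43} this is exactly the effect on $T_{s-1}$ of overwriting, with $s$, the entry in the first available position for $s$ in the rows allowed by step (4) of the reverse bumping algorithm. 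That entry is necessarily smaller than $s$, so in $\widetilde T_s$ the entry $s$ occupies the former position of a smaller entry of $T_s$, which is the assertion of (2).

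The step I expect to be the main obstacle is the bookkeeping around the two redundancies of $\widetilde F_\bullet$: one must verify, separately in the Type 1 and Type 2 cases and for every $s$ with $r\leq s\leq n$, that $G_1$ really does descend to a line in $M_s$ (this is where the definition of $r$ enters), that the index shift is such that $\widetilde{F_\bullet^s}$ is literally the reduction of $F_\bullet^s$ by that line in the sense of Lemmas \ref{lemma:maxk2} and \ref{lemma:maxl2}, and that this line $\overline W$ lies in the \emph{same} variety $\cB$ as $\overline X$ --- which is precisely the reason the argument has to be run on the truncated flags, where $p$ is still a corner, rather than at the top level. Granting that dictionary, parts (1) and (2) are just what Section \ref{section:generichyperplanes} says about the pair $\overline W,\overline X$, and the remaining steps of the reverse bumping cascade follow by iterating the same analysis, which is carried out in the proof of Proposition \ref{proposition:mainlemma}.
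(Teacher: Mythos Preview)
Your approach is essentially identical to the paper's: you pass to the truncated symplectic space $M_s=F_{n+s}/F_{n-s}$, identify the two lines $\overline X=F_{n-s+1}/F_{n-s}$ and $\overline W=(F_{n-s}+G_1)/F_{n-s}$, and then invoke Corollary~\ref{corollary:uniquegeneric}, Corollary~\ref{corollary:notperp}, and Theorem~\ref{theorem:etypeY} exactly as the paper does. The only substantive difference is in your treatment of (1)(b): where you argue that $\overline X\not\subseteq\overline W^{\perp}$ makes $\overline X+\overline W$ non-isotropic and hence ``forces the reduction to collapse,'' the paper instead unwinds $\overline X\not\subseteq\overline W^{\perp}$ to $F_{n-s+1}\not\subseteq G_1^{\perp}$, reads off that we are in a Type~2 redundancy, and concludes $r\geq s$ directly from the minimality defining $r$ --- this is a little more concrete than your collapse language, but amounts to the same thing.
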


\begin{proof}
Consider the truncated flags $F_{\bullet}^{s}$ and $\widetilde{F_{\bullet}^{s}}$ defined above. These flags map to bitableaux $T_s$ containing the numbers $1$ up to $s$ in the first case and $\widetilde{T_s}$ containing the numbers  $1$ up to $s$ with $r$ removed in the second case. \vspace{10pt}

We assume without loss of generality that we are in a Type 1 redundancy, as the Type 2 case is completely analogous.
We have that $F_{n+s} \cap G_{1}^{\perp}$ is contained in $F_{n+s}$ as a hyperplane. Moreover, both spaces contain $F_{n-s}$ as a subspace. Therefore $$\frac{F_{n+s} \cap G_{1}^{\perp}}{F_{n-s}} \subset F_{n+s}/F_{n-s}$$ as a hyperplane and, inside the symplectic space $F_{n+s}/F_{n-s}$, we have that $$\left(\frac{F_{n+s} \cap G_{1}^{\perp}}{F_{n-s}}\right)^{\perp}= \frac{F_{n-s}+G_1}{F_{n-s}},$$
which is a 1-dimension subspace of $F_{n+s}/F_{n-s}$ contained in the $\ker(x_{|_{F_{n+s}/F_{n-s}}})$. \vspace{10pt}

Now $$\frac{F_{n+s} \cap G_{1}^{\perp}}{F_{n-s}} \bigg/ \frac{F_{n-s}+G_1}{F_{n-s}} \cong \frac{F_{n+s} \cap G_{1}^{\perp}}{F_{n-s}+G_1},$$ which is the last term in the flag $\widetilde{F_{\bullet}^{s}}$. This shows that $(\frac{F_{n+s} \cap G_{1}^{\perp}}{F_{n-s}})^{\perp}=\frac{F_{n-s} + G_{1}}{F_{n-s}}$ is a one-dimensional space in $\ker(x_{|_{F_{n+s}/F_{n-s}}})$. 
But $(F_{n+s-1}/F_{n-s})^{\perp}=F_{n-s+1}/F_{n-s}$ is another such space generic with respect to this property. \vspace{5pt}
Now if $s$ is in the $(1,1)$-th position of $T_{s}$, that is, $m=1$,  then by Corollary \ref{corollary:uniquegeneric}, it follows immediately that  $\frac{F_{n+s} \cap G_{1}^{\perp}}{F_{n-s}}= F_{n+s-1}/F_{n-s}$, and so $F_{n+s} \cap G_{1}^{\perp}=F_{n+s-1}$. Therefore $F_{n+s-1} \subset G_{1}^{\perp}$ but  $F_{n+s} \not \subset G_{1}^{\perp}$ and so $r=s$ by choice of $r$ and we are in a Type 1 redundancy. \vspace{5pt}

If $s$ is in the $(m,1)$-th position of $T_{s}$ with $m>1$ with either $\mu_{m}^{(s)}=1$ and $m> \ell(\nu_{m}^{(s)})+1$, or $\nu_{m}^{(s)}=1$ and $m>\ell(\mu_{m}^{(s)})$, then by Corollary \ref{corollary:notperp}
since in both cases we have $\lambda_m=1$, we have $F_{n-s+1}/F_{n-s} \not \subseteq \frac{F_{n+s} \cap G_{1}^{\perp}}{F_{n-s}}$. Therefore $F_{n-s+1}\not \subseteq F_{n+s} \cap G_{1}^{\perp}$, which implies that $F_{n-s+1}\not \subseteq  G_{1}^{\perp}$ and so we are in a Type 2 redundancy. Rewriting this condition as $F_{n+s-1}\not \supseteq  G_{1}$ and remembering the choice of $r$ in this case we conclude that $n+s-1 \leq n+r-1$, that is $s \leq r$. But our original assumption is that $s \geq r$ and so we have $s=r$. \vspace{5pt}

Conversely if $s=r$, then the above arguments show that either $F_{n+s} \cap G_{1}^{\perp}=F_{n+s-1}$ in the Type 1 redundancy or $F_{n-s+1}\not \subseteq F_{n+s} \cap G_{1}^{\perp}$ in the Type 2 redundancy. It then follows by Corollaries \ref{corollary:notperp} and \ref{corollary:uniquegeneric} below that $s$ is in the $(m,1)$-th position with either $m=1$ in the Type 1 redundancy case, or $m>1$ and either $\mu_{m}^{(s)}=1$ and $m> \ell(\nu_{m}^{(s)})+1$, or $\nu_{m}^{(s)}=1$ and $m>\ell(\mu_{m}^{(s)})$ in the Type 2 redundancy. Therefore parts (a) and (b) of (1) are proved. In these cases, the number $r$ simply disappears from $T_{r}$ and so $\widetilde{T}_{r}=T_{r-1}$. \vspace{5pt}

Now let us assume that $s > r$. Then $F_{n-s+1}/F_{n-s} \subseteq \frac{F_{n+s} \cap G_{1}^{\perp}}{F_{n-s}}$ and so the number $s$ moves from $T_s$ into $\widetilde{T_s}$. By a similar argument, every number less than $s$, excluding $r$ moves from  $T_s$ into $\widetilde{T_s}$. Now by Theorem \ref{theorem:etypeY}, (the exotic analogue of the \cite[Lemma 3.2]{Ste}), with $W=\frac{F_{n-s} + G_{1}}{F_{n-s}}$, $X=F_{n-s+1}/F_{n-s}$ and so $Y:=X+W=\frac{F_{n-s+1} + G_{1}}{F_{n-s}} \subset \frac{F_{n+s-1} \cap G_{1}^{\perp}}{F_{n-s}}=X^{\perp} \cap W^{\perp} =Y^{\perp}$, we know how $$\eType\left(v+(F_{n-s+1}+G_1), x_{|_{\frac{F_{n+s-1} \cap G_{1}^{\perp}}{F_{n-s+1}+G_1}}}\right)$$ is obtained from $\eType(v+F_{n-s},x_{|F_{n+s}/F_{n-s}})$, which tells us what position $s$ occupies in $\widetilde{T_s}$. Therefore, $s$ has moved from $T_{s}$ into  $\widetilde{T_s}$, where it has displaced a smaller number $s' < s$. Therefore (2) holds and we are done.  
\end{proof}

\begin{proof}[Proof of Proposition \ref{proposition:mainlemma}]
We let $s$ be in the $(m,1)$-th or $(m,\lambda_m)$-th position of $T$. By Lemmas \ref{lemma:rlessthans} and \ref{lemma:rs}, we know that in the transition from $T$ to $\widetilde{T}$, either $s$ disappears from the bitableau or moves and displaces a smaller number, which in turn displaces a smaller number and so on, until eventually $r$ disappears from the tableau (from the first row in the Type 1 redundancy, or from a column immediately to the left or right of the dividing wall in the Type 2 redundancy). This chain of displacements is governed by the rules of Theorem \ref{theorem:etypeY}. It also  follows by Theorem \ref{theorem:etypeY}, that no other number in $T$ causes a chain of displacements as that would imply that different numbers in $T_s$ would move to the same position of $\widetilde{T}_{s}$, which is impossible. Since, as explained in Remark \ref{remark:43}, Theorem \ref{theorem:etypeY} is the geometric incarnation of the reverse bumping algorithm described in Section \ref{section:bumpingalgorithm}, it follows that the first step of the Exotic Robinson-Schensted bijection has been achieved. 
\end{proof}

\subsection{Completing the proof.}

While it may seem that the main theorem follows from Proposition \ref{proposition:mainlemma} by induction, there is a subtlety: $\widetilde{F}_{\bullet}, \widetilde{G}_{\bullet}$ may not be generic, even though $F_{\bullet}, G_{\bullet}$ are. 
In this section we remedy this using the argument on pg 528 of Steinberg, \cite{Ste}. 

\begin{remark}Recall the definition of relative position of two symplectic flags $F_\bullet, G_\bullet\in\cF(V)$ from Definition \ref{relative}, then 
the Bruhat decomposition states that the orbits of $Sp_{2n}(\mathbb{C})$ on $\cF(V) \times \cF(V)$ are determined by the relative positions of the two flags. We have the Bruhat ordering on $W(C_n)$, which satisfies the following: for each $w \in W(C_n)$, the set of all flags $(F_{\bullet}, G_{\bullet})$ that satisfy $w(F_{\bullet}, G_{\bullet}) \leq w$ is a closed set. 
\end{remark}

\begin{definition} \label{definition:neww}
Given $w \in W(C_n)$, define $\tilde{w} \in W(C_{n-1})$ as follows. Suppose that $w(n) \in \{r, \overline{r}\}$, then for $1 \leq i \leq n-1$ define $\tilde{w}(i) = w(i)$ if $w(i) < r$, and $\tilde{w}(i) = w(i+1)-1$ if $w(i+1) > r$.  
\end{definition}

\begin{lemma} \label{lemma:relativeposition}
If $w = w(F_{\bullet}, G_{\bullet})$, then $w(\widetilde{F_{\bullet}}, \widetilde{G_{\bullet}}) = \tilde{w}$. 
\end{lemma}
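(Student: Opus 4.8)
The claim is that passing from the pair of flags $(F_\bullet, G_\bullet)$ to the pair $(\widetilde{F}_\bullet, \widetilde{G}_\bullet)$ inside the smaller flag variety $\cF(G_1^\perp/G_1)$ has the effect of replacing the relative position $w = w(F_\bullet, G_\bullet)$ by $\tilde w$ as defined in Definition \ref{definition:neww}. The strategy is to work directly with Definition \ref{relative}: choose a symplectic basis $\{v_1,\ldots,v_n,v_{\bar n},\ldots,v_{\bar 1}\}$ of $V$ adapted to $F_\bullet$ in the sense that $F_i = \C\{v_n,\ldots,v_{n-i+1}\}$, so that the flag $G_\bullet$ is given by $G_j = \C\{v_{w(n)},\ldots,v_{w(n-j+1)}\}$. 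Since $G_1 = \C\{v_{w(n)}\}$, we have $w(n) \in \{r, \bar r\}$ for some $r$, and $G_1$ is spanned by either $v_r$ or $v_{\bar r}$; by symmetry of the two cases (replacing $w$ by the signed permutation with $w(n)$ negated swaps the two and is harmless for the computation of relative position in the quotient) it suffices to treat, say, $w(n) = \bar r$, so $G_1 = \C\{v_{\bar r}\}$ and $G_1^\perp = \Span\{v_i, v_{\bar i} : i \neq r\} \oplus \C\{v_{\bar r}\}$, with $G_1^\perp/G_1$ having induced symplectic basis $\{\bar v_i : i \neq r\}$ where $\bar v_i$ denotes the image of $v_i$.

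\textbf{Key steps.} First I would record an explicit symplectic basis of $G_1^\perp/G_1$ adapted to $\widetilde{G}_\bullet$: since $\widetilde{G}_\bullet$ is literally $(G_{j}/G_1)_j$ with the redundancy at index $n$ deleted, the basis vectors appearing, in order, are $v_{w(n-1)}, v_{w(n-2)}, \ldots$ with the entry $v_{w(n)} = v_{\bar r}$ omitted; i.e. $\widetilde{G}_j = \C\{\bar v_{w(j')} : \text{the first } j \text{ indices } j' \text{ with } w(j')\neq \bar r\}$. Next I would analyze $\widetilde{F}_\bullet$ in both the Type 1 and Type 2 redundancy cases using the explicit descriptions already given in the relevant Definition. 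In the Type 1 case $\widetilde{F}_i = (G_1 + F_i)/G_1$ for the relevant range; since $v_{\bar r} = v_{w(n)} \in G_n \setminus G_{n-1}$ appears among $\{v_n,\ldots,v_1\}$ at some position, one checks that $(G_1 + F_i)/G_1$ is spanned by the images of $v_n, v_{n-1},\ldots$ skipping $v_{\bar r}$, so that $\widetilde{F}_\bullet$ is the standard flag for this reindexed basis; the Type 2 case is similar but one intersects with $G_1^\perp$ first, and the same conclusion holds after the harmless symmetry reduction. Then the relative position $w(\widetilde{F}_\bullet, \widetilde{G}_\bullet)$ is read off: $\widetilde{G}_j$ is spanned by the images of $v_{w(j')}$ for the first $j$ values $j'$ with $w(j') \neq \bar r$, and to express this in terms of the $\widetilde{F}$-basis one must renumber the index set $\{n, n-1, \ldots, 1\} \setminus \{\text{position of } v_{\bar r}\}$ order-preservingly onto $\{n-1, \ldots, 1\}$ and likewise renumber the values. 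Carrying out this bookkeeping — an entry $v_{w(i)}$ with underlying value (as a signed integer) below $r$ keeps its value, while one above $r$ drops by $1$, and the omitted slot is exactly the one carrying $w(n)$ — produces precisely the formula for $\tilde w$ in Definition \ref{definition:neww}. Finally I would note that $\widetilde{F}_\bullet$ and $\widetilde{G}_\bullet$ really are symplectic flags in $G_1^\perp/G_1$ of the correct type (dimensions shift by one, perpendicularity is inherited), so that $\tilde w \in W(C_{n-1})$ makes sense and the identity holds.

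\textbf{Main obstacle.} The substantive work is the combinatorial bookkeeping connecting the order-preserving reindexing of basis vectors to the formula defining $\tilde w$, together with the verification that in both redundancy types the flag $\widetilde{F}_\bullet$ genuinely becomes the \emph{standard} flag with respect to the reindexed symplectic basis — this requires being careful that the operations $(G_1 + F_i \cap G_1^\perp)/G_1$ do not introduce any unexpected spanning vectors, which in turn uses that $v_{\bar r}$ occupies a single definite slot in the $F$-adapted basis and that passing to $G_1^\perp$ only removes the "$v_r$ direction." I expect the argument to reduce, after fixing the adapted basis, to a direct but slightly tedious matching of index sets; the conceptual content is entirely in choosing the right basis and tracking which slot disappears, exactly as on pg.\ 528 of \cite{Ste}.
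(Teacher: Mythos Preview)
Your approach is essentially the same as the paper's: fix a symplectic basis adapted to $F_\bullet$ so that $G_\bullet$ is given by the $w$-permuted basis, compute $\widetilde{F}_\bullet$ and $\widetilde{G}_\bullet$ explicitly in this basis, and then reindex to read off $\tilde w$. That strategy is correct and matches the paper.

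However, your ``symmetry'' reduction and the ensuing description of the Type~1 case are confused. Negating $w(n)$ does not leave the situation invariant: it literally changes the relative position (which \emph{is} $w$) and swaps the redundancy types. More concretely, the two cases $w(n)=r$ and $w(n)=\bar r$ are not interchangeable by a harmless symmetry; rather, they are \emph{forced} by the geometry. If $w(n)=r$ then $G_1=\C v_r\subset\C\{v_1,\ldots,v_n\}=F_n$, so $F_n\supset G_1$ and you are in the Type~1 redundancy; if $w(n)=\bar r$ then $G_1=\C v_{\bar r}\not\subset F_n$ and you are in Type~2. In particular your sentence ``$v_{\bar r}=v_{w(n)}\ldots$ appears among $\{v_n,\ldots,v_1\}$'' is false: $v_{\bar r}$ never lies in $F_n$. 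The paper makes exactly this identification (Type~1 $\Leftrightarrow$ $w(n)=r$, Type~2 $\Leftrightarrow$ $w(n)=\bar r$) and then observes that in \emph{both} cases $\widetilde{F}_{\bullet,i}$ is given by $\C\{v_n,\ldots,\hat v_r,\ldots,v_{n-i+1}\}$ in the quotient, after which the order-preserving relabelling $\theta:\{1,\ldots,\hat r,\ldots,n\}\to\{1,\ldots,n-1\}$ yields $\tilde w$. Once you replace your symmetry shortcut with this direct dichotomy, the rest of your bookkeeping goes through exactly as you describe.
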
 
\begin{proof} Recall from Definition \ref{relative} that if $w = w(F_{\bullet}, G_{\bullet})$, then we can choose an orthonormal basis $\{ v_1, \cdots, v_n, \overline{v}_n, \cdots, \overline{v}_{1} \}$ (i.e. so that $\langle v_i, \overline{v}_j \rangle = \delta_{i,j}=-\langle  \overline{v}_j, v_i \rangle, \langle v_i, v_j \rangle = \langle \overline{v}_i, \overline{v}_j \rangle = 0 $), with 

\begin{align*}
F_{i} &= \C \{ v_{n}, \ldots, v_{n-i+1}\},	&F_{2n-i}=F_{i}^{\perp},    \\ 
G_{j} &= \C\{v_{w(n)}, \ldots, v_{w(n-j+1)}\},  	&G_{2n-j}=G_{j}^{\perp},
\end{align*}
for all $1 \leq i,j \leq n$. Using this basis we can explicitly describe the flags $\widetilde{G}_{\bullet}$ and $\widetilde{F}_{\bullet}$, while being careful regarding the type of redundancy that determines $\widetilde{F}_{\bullet}$. Firstly, we have
\begin{equation} \label{equation:newG}
\widetilde{G}_{\bullet} = (0 \subset \mathbb{C}\{ v_{w(2)} \} \subset \mathbb{C}\{ v_{w(2)}, v_{w(3)} \} \subset \cdots \subset \mathbb{C} \{ v_{w(2)}, v_{w(3)}, \cdots, v_{w(2n-2)}, v_{w(2n-1)} \} ).
\end{equation}
Now for $\widetilde{F}_{\bullet}$ we had $\widetilde{F}_{i}=\frac{G_1 + F_{i} \cap G_{1}^{\perp}}{G_1}$ for each $i$. In the Type 1 redundancy we had $F_n \subseteq G_{1}^{\perp}$ and chose $r$ minimal such that $F_{n+r} \not \subseteq G_{1}^{\perp}$. Therefore 
$$
F_{n+r}=\C\{ v_{n},\ldots, v_{1}, v_{\bar{1}},\ldots,v_{\bar{r}} \} \not\subseteq G_{1}^{\perp} \quad \text{but} \quad F_{n+r-1}=\C\{v_{n},\ldots, v_{1}, v_{\bar{1}},\ldots,v_{\overline{r-1}} \} \subseteq G_{1}^{\perp},
$$
so this implies that $w(\bar{n})=\bar{r}$, or equivalently $w(n)=r$. For the Type 2 redundancy, we chose $r$ minimal such that $F_{n+r} \supseteq G_1$, so in this case: 
$$
F_{n+r}=\C\{ v_{n},\ldots, v_{1}, v_{\bar{1}},\ldots,v_{\bar{r}} \} \supseteq G_{1}=\C\{v_{w(n)}\} \quad \text{but} \quad F_{n+r-1}=\C\{v_{n},\ldots, v_{1}, v_{\bar{1}},\ldots,v_{\overline{r-1}} \} \not\supseteq G_{1},
$$
which implies $w(n)=\bar{r}$. 
Explicitly, in both cases, we have: 
$$
\widetilde{F}_{\bullet,i} = 
\begin{cases}
\C\{ v_{n},\ldots,  v_{n-i+1} \} 						&\text{if} \, \, i  < n-r+1,  \\
\C\{ v_{n},\ldots,  \hat{v}_{r}, \ldots,v_{n-i+1} \} 			&\text{if} \, \,  i > n-r+1,
\end{cases}
$$
where $\hat{v}_{r}$ denotes omission of $v_{r}$, and $\widetilde{F}_{\bullet,2n-i} = \widetilde{F}_{\bullet,i}^{\perp}$. \vspace{5pt}
After apply the transformation $ \theta: \{1,\ldots, \hat{r}, \ldots, n\} 	\longrightarrow  \{1,\ldots, n-1\}$ defined by 
$$
\theta(i)=
\begin{cases}
i 	&\text{if} \quad i < r  \\
i-1	&\text{if} \quad i > r,
\end{cases}
$$ and replacing $w$ by $\tilde{w}$ as defined in Definition \ref{definition:neww} in the expression \eqref{equation:newG} for $\widetilde{G}_{\bullet}$, the lemma follows. 
\end{proof}

\begin{remark}
For notation convenience, we will think of $\tilde{w}$ as a signed bijection from the set $\{1,\ldots, n-1\} \rightarrow \{1, \ldots, \hat{r}, \ldots n\}$, but still regard it as an element of $W(C_{n-1})$. 
\end{remark}

\begin{lemma} \label{lemma:RSinduction}
Suppose that  $\RS(T, R) = w$, then $\RS(\widetilde{T}, R_{n-1}) = \widetilde{w}$.  

\end{lemma}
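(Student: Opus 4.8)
The plan is to prove Lemma \ref{lemma:RSinduction} purely combinatorially, by comparing the first step of the reverse bumping algorithm applied to $(T,R)$ with the passage from $(T,R)$ to $(\widetilde{T}, R_{n-1})$, and then invoking induction on $n$. First I would unwind what the reverse bumping algorithm does on input $(T,R)$: since the largest label is $n$, step (2) locates $n$ in $R$, reads off the number $s$ in the corresponding cell of $T$, removes $n$ from $R$ (producing exactly $R_{n-1}$, since $n$ was necessarily in a corner), and then runs the cascade of displacements inside $T$ starting from $s$, terminating by ejecting some number $r$ from row $1$ (or, in the Type 2 picture, from a column adjacent to the wall) and recording $w(n)=r$ or $w(n)=\bar r$ accordingly. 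By Proposition \ref{proposition:mainlemma}, this is precisely the combinatorial incarnation of the geometric passage $(T,R)\mapsto(\widetilde{T},R_{n-1})$: the recording side becomes $R_{n-1}$ and the insertion side becomes $\widetilde{T}$, with the same ejected label $r$. So the content of the lemma is the bookkeeping identity relating the Weyl group element $w=\RS(T,R)$ to $\widetilde w=\RS(\widetilde T, R_{n-1})$.

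The key steps, in order, would be: (i) By Proposition \ref{proposition:mainlemma}, the first step of the reverse bumping algorithm on $(T,R)$ outputs the pair $(\widetilde T, R_{n-1})$ together with a value of $w(n)$, which is $r$ if the cascade ejects $r$ from the first row (Type 1, i.e. $m=1$ in the final step, no sign) and $\bar r$ if it ejects from a wall-adjacent column (Type 2, sign). (ii) After this first step, the reverse bumping algorithm continues on the pair $(\widetilde T, R_{n-1})$ exactly as the full algorithm would run on a pair of standard bitableaux with labels $\{1,\dots,n\}\setminus\{r\}$; its output on $\{1,\dots,n-1\}$ positions is, by definition, $\RS(\widetilde T, R_{n-1})$, but read through the relabelling $\theta\colon\{1,\dots,\hat r,\dots,n\}\to\{1,\dots,n-1\}$ of Lemma \ref{lemma:relativeposition}. (iii) Comparing the remaining $n-1$ outputs $w(1),\dots,w(n-1)$ of $\RS(T,R)$ — which are produced by running steps (2)–(6) on $(\widetilde T, R_{n-1})$ with labels $n-1, n-2, \dots, 1$ — against the outputs of $\RS(\widetilde T, R_{n-1})$, I note these are literally the same computation, the only difference being that in the first case the labels carried around are $\{1,\dots,n-1\}\setminus\{$ nothing special $\}$ acting on positions whose ``true'' content is in $\{1,\dots,\hat r,\dots,n\}$. (iv) Hence $\RS(T,R)(i)$ for $1\le i\le n-1$ equals $\RS(\widetilde T, R_{n-1})(i)$ after applying $\theta^{-1}$ to interpret the value, and $\RS(T,R)(n)$ is the sign-decorated $r$. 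By Definition \ref{definition:neww}, this is exactly the statement $\RS(T,R)=w \implies \RS(\widetilde T, R_{n-1})=\widetilde w$, since $\widetilde w$ was defined to be $w$ with the entry $w(n)\in\{r,\bar r\}$ deleted and the remaining values renormalised by $\theta$.

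Concretely I would write: ``By Proposition \ref{proposition:mainlemma}, the first step of the reverse bumping algorithm applied to $(T,R)$ produces $w(n)\in\{r,\bar r\}$ and replaces $(T,R)$ by $(\widetilde T, R_{n-1})$. The subsequent steps of the algorithm on $(T,R)$ are, by definition, the steps of the reverse bumping algorithm applied to the pair $(\widetilde T, R_{n-1})$, which has labels $1,\dots,n$ with $r$ omitted. Relabelling these via the order-preserving bijection $\theta$ of Lemma \ref{lemma:relativeposition} converts this to the computation of $\RS(\widetilde T, R_{n-1})\in W(C_{n-1})$, and tracking the values through $\theta$ shows $w(i)=\widetilde w(i)$ under the identification of Definition \ref{definition:neww}. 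Since also $w(n)\in\{r,\bar r\}$ matches the prescription there, we conclude $\RS(\widetilde T, R_{n-1})=\widetilde w$.''

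The main obstacle I anticipate is making precise — and checking — that ``the algorithm's remaining steps on $(T,R)$ literally coincide with the algorithm run on $(\widetilde T, R_{n-1})$,'' especially the interaction with the row-numbering $\mathscr R$ and the clause ``rows $\{r\in\mathscr R\mid r\ge m-1\}$'' in step (4): one must verify that deleting one box from each tableau (as happens in the passage $T\to\widetilde T$, $R\to R_{n-1}$, respectively, via the cascade and the removal of $n$) does not disturb which positions are ``available'' for the later labels, and that the relabelling $\theta$ on entries does not change any of the comparisons ``smaller than $s$'' that govern available positions (it does not, because $\theta$ is order-preserving and $r$ has already been ejected before those later labels are processed). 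This is genuinely just careful unwinding of the algorithm's definition together with Proposition \ref{proposition:mainlemma} and Lemma \ref{lemma:relativeposition}, so I would not expect it to require any new idea beyond bookkeeping, and the write-up can legitimately be terse — mirroring the terse ``left as an exercise'' style already used for the mutual-inverse theorem — provided the induction and the role of $\theta$ are stated cleanly.
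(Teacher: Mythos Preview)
Your proposal is correct and takes essentially the same approach as the paper's own proof: both argue that once the first step of the reverse bumping algorithm on $(T,R)$ has produced $w(n)\in\{r,\bar r\}$ and the pair $(\widetilde{T},R_{n-1})$ (which is the content of Proposition~\ref{proposition:mainlemma}), the remaining steps of the algorithm are literally the algorithm run on $(\widetilde{T},R_{n-1})$, and the order-preserving relabelling $\theta$ converts this to the computation of $\widetilde{w}$ as in Definition~\ref{definition:neww}. The paper's proof is extremely terse (essentially one sentence), whereas you make explicit the role of Proposition~\ref{proposition:mainlemma} in identifying the geometrically defined $\widetilde{T}$ with the combinatorial first-step output, and you spell out why the relabelling is harmless; this added precision is welcome but does not constitute a different argument.
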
 
\begin{proof} This follows from the reverse bumping algorithm. The moves performed when removing $n-1$ from the pair $(T_{n-1}, R_{n-1})$ exactly mirror the moves performed when removing $n-1$ from $(T^{(n-1)}, T^{(n-1)})$ (though the labelling is slightly different). The conclusion follows by iterating this argument. 
\end{proof}


\begin{proof}[Proof of Theorem \ref{theorem:main}] 
To finish off the proof it only remains for us to examine the case where $\widetilde{F}_{\bullet}$ is not generic relative to the bitableau $\widetilde{T}$ even if $F_{\bullet}$ and $G_{\bullet}$ were chosen generically relative to $T$ and $R$ respectively.  \vspace{5pt}

Let $\RS(T,R)=w$. By induction, the relative position of a generic element lying in $\Phi_{n-1}^{-1}(\widetilde{T})$, and a generic element lying in $\Phi_{n-1}^{-1}(S_{n-1})$ is equal to $\RS(\widetilde{T}, R_{n-1}):=\widetilde{w}$, thought of as a signed bijection $\{1,\ldots, n-1\}$ to $\{1,\ldots, \hat{r}, \ldots, n\}$ by simply deleting the last letter of $w$. We know by Proposition \ref{proposition:mainlemma} that the pair of bitableaux $(\widetilde{T},R_{n-1})$ is obtained from $(T,R)$ by the Exotic Robinson-Schensted algorithm.
\vspace{5pt}

Now since the set of all such flags $(V_{\bullet}), (W_{\bullet})$ satisfying $w(\widetilde{V}_{\bullet}, \widetilde{W}_{\bullet}) \leq \widetilde{w}$ is a closed set, we may assume that 
$w(\widetilde{F}_{\bullet},\widetilde{G}_{\bullet}) \leq \widetilde{w}$, since if $\widetilde{F}_{\bullet}$ is not generic relative to $\widetilde{T}$, we know that it lies in the closure of all flags $\widetilde{F'}_{\bullet}$ in the Exotic Springer Fibre that are (see \cite[Theorem 2.12]{NRS16}). Now the inequality is still satisfied when we put $r$ at the end of the word $\widetilde{w}$. Therefore, since inserting $r$ preserves the Bruhat ordering on $W$, it follows that $w(F_{\bullet}, G_{\bullet}) \leq \RS(S, T)$. \vspace{5pt}

As $(T, R)$ varies over the bitableau, the relative positions $w(F_{\bullet}, G_{\bullet})$ pass over each permutation exactly once, this is a fact about the exotic Steinberg variety which follows from \cite[Lemma 1.5]{Kat}, or more generally from \cite[Theorems 3.5 and 3.6]{SteinRSK}. Further, their images under the exotic Robinson-Schensted bijection $w(T, R)$ also sweep out each permutation exactly once, since the correspondence is a bijection. It now follows that the inequality $w(F_{\bullet}, G_{\bullet}) \leq \RS(T, S)$ is in fact an equality (by backwards induction on the length of the word $\RS(T,R)$, the base case being the long word $w_0$). Hence the main theorem is proved. \end{proof}

\newpage

\begin{appendix}

\section{The exotic Robinson-Schensted bijection for $n=3$}
Here we give the complete exotic Robinson-Schensted bijection for $n=3$. The $n=2$ case was given in \cite{NRS16}. We will give this correspondence in such a way that the exotic cells in the Weyl group are clear. 

\scriptsize{
\begin{longtable}{|c c | c c |}
\hline
\textbf{Element of $W(C_3)$} & \textbf{Bitableaux} 	&\textbf{Element of $W(C_3)$} 					& \textbf{Bitableaux}  \\
\hline  
&&&\\
$123$ 					&  $\left(\left(\young(321); - \right),\left(\young(321); -\right)\right)$ 		&  $\bar{3}\bar{2}\bar{1}$ 	& $\left(\left(-;\young(123)\right), \left(-;\young(123) \right)\right)$ \\
&&&\\
\hline
&&& \\
$1\bar{2}\bar{3}$ 			& $\left(\left(\young(1,2,3); - \right), \left(\young(1,2,3); - \right)\right)$  	&  $\bar{1}\bar{2}\bar{3}$ 	& $\left(\left(-;\young(1,2,3)\right), \left(-;\young(1,2,3) \right)\right)$ \\
&&&\\
\hline 
&&& \\

$1\bar{2}3$				& $\left(\left(\young(31,:2); - \right),\left(\young(31,:2); -\right)\right)$		&$\bar{1}\bar{3}\bar{2}$	& $\left(\left(-;\young(13,2) \right),\left(-;\young(13,2)\right)\right)$ \\

$12\bar{3}$				& $\left(\left(\young(21,:3); - \right),\left(\young(21,:3); -\right)\right)$ 		& $\bar{2}\bar{1}\bar{3}$		& $\left(\left(-;\young(12,3) \right),\left(-;\young(12,3)\right)\right)$ \\
 
$1\bar{3}2$ 				&  $\left(\left(\young(21,:3); - \right), \left(\young(31,:2); - \right)\right)$ 	&$\bar{2}\bar{3}\bar{1}$		& $\left(\left(-;\young(12,3) \right),\left(-;\young(13,2)\right)\right)$ \\

$13\bar{2}$				& $\left(\left(\young(31,:2); - \right), \left(\young(21,:3); - \right)\right)$ 	&$\bar{3}\bar{1}\bar{2}$		& $\left(\left(-;\young(13,2) \right),\left(-;\young(12,3)\right)\right)$ \\
&&&\\
\hline
&&& \\
$132$				& $\left(\left(\young(21);\young(3)\right), \left(\young(21);\young(3)\right)\right)$ 	&$3\bar{2}1$			& $\left(\left(\young(1);\young(23)\right), \left(\young(1);\young(23)\right)\right)$ 	  \\

$213$				& $\left(\left(\young(31);\young(2)\right), \left(\young(31);\young(2)\right)\right)$ 	&$\bar{1}32$			& $\left(\left(\young(2);\young(13)\right), \left(\young(2);\young(13)\right)\right)$ 	  \\

$\bar{1}23$			& $\left(\left(\young(32);\young(1)\right), \left(\young(32);\young(1)\right)\right)$ 	&$\bar{2}\bar{1}3$		& $\left(\left(\young(3);\young(12)\right), \left(\young(3);\young(12)\right)\right)$ 	  \\

$312$				& $\left(\left(\young(21);\young(3)\right), \left(\young(31);\young(2)\right)\right)$ 	&$\bar{2}31$			& $\left(\left(\young(1);\young(23)\right), \left(\young(2);\young(13)\right)\right)$ 	  \\

$231$				& $\left(\left(\young(31);\young(2)\right), \left(\young(21);\young(3)\right)\right)$ 	&$3\bar{1}2$			& $\left(\left(\young(2);\young(13)\right), \left(\young(1);\young(23)\right)\right)$ 	  \\

$\bar{3}12$			& $\left(\left(\young(21);\young(3)\right), \left(\young(32);\young(1)\right)\right)$ 	&$\bar{3}\bar{2}1$		& $\left(\left(\young(1);\young(23)\right), \left(\young(3);\young(12)\right)\right)$ \\

$23\bar{1}	$			& $\left(\left(\young(32);\young(1)\right), \left(\young(21);\young(3)\right)\right)$ 	&$3\bar{2}\bar{1}$		& $\left(\left(\young(3);\young(12)\right), \left(\young(1);\young(23)\right)\right)$ \\

$\bar{2}13$			& $\left(\left(\young(31);\young(2)\right), \left(\young(32);\young(1)\right)\right)$ 	&$\bar{3}\bar{1}2$		& $\left(\left(\young(2);\young(13)\right), \left(\young(3);\young(12)\right)\right)$ \\

$2\bar{1}3$			& $\left(\left(\young(32);\young(1)\right), \left(\young(31);\young(2)\right)\right)$ 	&$\bar{2}3\bar{1}$		& $\left(\left(\young(3);\young(12)\right), \left(\young(2);\young(13)\right)\right)$ \\
&&&\\
\hline
&&& \\

$1\bar{3}\bar{2}$		& $\left(\left(\young(1,2);\young(3)\right), \left(\young(1,2);\young(3)\right)\right)$ &$21\bar{3}$		& $\left(\left(\young(1);\young(2,3)\right), \left(\young(1);\young(2,3)\right)\right)$  \\

$321$				& $\left(\left(\young(1,3);\young(2)\right), \left(\young(1,3);\young(2)\right)\right)$ &$\bar{1}2\bar{3}$	& $\left(\left(\young(2);\young(1,3)\right), \left(\young(2);\young(1,3)\right)\right)$  \\

$\bar{3}2\bar{1}$		& $\left(\left(\young(2,3);\young(1)\right), \left(\young(2,3);\young(1)\right)\right)$ &$\bar{1}\bar{2}3$	& $\left(\left(\young(3);\young(1,2)\right), \left(\young(3);\young(1,2)\right)\right)$ \\

$31\bar{2}$			& $\left(\left(\young(1,2);\young(3)\right), \left(\young(1,3);\young(2)\right)\right)$ &$\bar{2}1\bar{3}$	& $\left(\left(\young(1);\young(2,3)\right), \left(\young(2);\young(1,3)\right)\right)$ \\

$2\bar{3}1$			& $\left(\left(\young(1,3);\young(2)\right), \left(\young(1,2);\young(3)\right)\right)$ &$2\bar{1}\bar{3}$	& $\left(\left(\young(2);\young(1,3)\right), \left(\young(1);\young(2,3)\right)\right)$ \\

$\bar{3}1\bar{2}$		& $\left(\left(\young(1,2);\young(3)\right), \left(\young(2,3);\young(1)\right)\right)$ &$\bar{2}\bar{3}1$	& $\left(\left(\young(1);\young(2,3)\right), \left(\young(3);\young(1,2)\right)\right)$ \\

$2\bar{3}\bar{1}$		& $\left(\left(\young(2,3);\young(1)\right), \left(\young(1,2);\young(3)\right)\right)$ &$3\bar{1}\bar{2}$	& $\left(\left(\young(3);\young(1,2)\right), \left(\young(1);\young(2,3)\right)\right)$ \\

$\bar{3}21$			& $\left(\left(\young(1,3);\young(2)\right), \left(\young(2,3);\young(1)\right)\right)$ &$\bar{1}\bar{3}2$	& $\left(\left(\young(2);\young(1,3)\right), \left(\young(3);\young(1,2)\right)\right)$ \\

$32\bar{1}$			& $\left(\left(\young(2,3);\young(1)\right), \left(\young(1,3);\young(2)\right)\right)$ &$\bar{1}3\bar{2}$	& $\left(\left(\young(3);\young(1,2)\right), \left(\young(2);\young(1,3)\right)\right)$ \\
\hline

\end{longtable}}
\normalsize
\section{Examples}

\begin{example} \label{example:partition1}
Consider the bipartition $(\mu,\nu)=((5,4,3,2,1^2);(3,2^4,1))$, for which we picture the space $V$ below.

\begin{center}
\begin{tikzpicture}[scale=0.3]
\draw[-,line width=2pt] (0,7) to (0,-7);
\draw (-5,7) -- (-5,6) -- (3,6) -- (3,7) -- (-5,7);
\draw (-4,7) -- (-4,5) -- (2,5); 
\draw (-3,7) -- (-3,4) -- (2,4); 
\draw (-2,7) -- (-2,3) -- (2,3); 
\draw (-1,2) -- (2,2) -- (2,7); 
\draw (-1,7) -- (-1,1) -- (1,1) -- (1,7);

\draw (-5,-7) -- (-5,-6) -- (3,-6) -- (3,-7) -- (-5,-7);
\draw (-4,-7) -- (-4,-5) -- (2,-5); 
\draw (-3,-7) -- (-3,-4) -- (2,-4); 
\draw (-2,-7) -- (-2,-3) -- (2,-3); 
\draw (-1,-2) -- (2,-2) -- (2,-7); 
\draw (-1,-7) -- (-1,-1) -- (1,-1) -- (1,-7); 
\end{tikzpicture}
\end{center}
In this case we have $v=v_{1,5}+v_{2,4}+v_{3,3}+v_{4,2}+v_{5,1}+v_{6,1}$. Let $w=\alpha_{1}v_{1,1} +\alpha_{2}v_{2,1} +\alpha_{3}v_{3,1} +\alpha_{4}v_{4,1} + \beta_{4}v_{4,4}^{*}+\beta_{3}v_{3,5}^{*}+\beta_{2}v_{2,6}^{*}+\beta_{4}v_{1,8}^{*}$ with $\alpha_4 \neq 0$ and $\beta_2+\beta_3+\beta_4=0$. Therefore $\eType(v+W,x_{|W^{\perp}/W})=((5,4,3,1^3);(3,2^4,1))$ - obtained from $(\mu,\nu)$ by decreasing $\mu_4$ by $1$. \vspace{5pt}

We mimic the proof of Proposition \ref{proposition:trick} to confirm that the maximal $k_2$ such that $X \subseteq x^{k_2-1}(W^{\perp})+\C[x]v+w$ is $4(=\lambda_4-1)$ in this case. It is clear that the vectors $v_{1,1}, v_{2,1}, v_{3,1} \in x^{3}(W^{\perp})$ while $v_{4,1} \not\in x^{3}(W^{\perp})$. We now show that   $v_{4,1} \not\in x^{3}(W^{\perp})+\C[x]v$, which implies that $v_{4,4}^{*}$ and therefore $X$ is not contained in $x^{3}(W^{\perp})+\C[x]v+W$. \vspace{5pt}

Now we have $\beta_{3}v_{2,6}-\beta_{2}v_{3,5}$ and  $\beta_{4}v_{3,5}-\beta_{3}v_{4,4} \in W^{\perp}$, which implies that 
$$
\beta_{3}v_{2,3}-\beta_{2}v_{3,2}, \beta_{4}v_{3,2}-\beta_{3}v_{4,1} \in x^{3}(W^{\perp}).
$$ 
Also we have $v_{1,4} \in x^{3}(W^{\perp})$ and so we require the vector $x(v)-v_{1,4}=v_{2,3}+v_{3,2}+v_{4,1}$ to be linearly independent from $\beta_{3}v_{2,3}-\beta_{2}v_{3,2} $ and  $\beta_{4}v_{3,2}-\beta_{3}v_{4,1}.$ However 
$$
\beta_{3}v_{2,3}-\beta_{2}v_{3,2} - (\beta_{4}v_{3,2}-\beta_{3}v_{4,1} )=\beta_{3}v_{2,3}+(-\beta_{2} -\beta_{4})v_{3,2}+\beta_{3}v_{4,1}=\beta_{3} (v_{2,3}+v_{3,2}+v_{4,1})
$$ 
since $\beta_2+\beta_3+\beta_4=0$. Therefore it is impossible to isolate $v_{4,1}$ in $x^{3}(W^{\perp})+\C[x]v$, which proves that $k_2=3$ in this case. Also by Proposition \ref{proposition:genericl2}, we know that $l_2=3$ and since $5 = \max \Delta_4 < \max \Gamma_5=6$ we decrease $\nu_{5}=\nu_{\max \Delta_4}$ by $1$ to obtain $\eType(v+Y,x_{|Y^{\perp}/Y})=((5,4,3,1^3),(3,2^3,1^2))$. Pictorially we have 

\begin{center}
$$
\begin{array}{ccccc}
 \eType(v,x)& & \eType(v+W,x_{|W^{\perp}/W}) && \eType(v+Y,x_{|Y^{\perp}/Y})   \\
\hline\\

\begin{tikzpicture}[scale=0.3]

\draw[-,line width=2pt] (0,7) to (0,1);
\draw (-5,7) -- (-5,6) -- (3,6) -- (3,7) -- (-5,7);
\draw (-4,7) -- (-4,5) -- (2,5); 
\draw (-3,7) -- (-3,4) -- (2,4); 
\draw (-2,7) -- (-2,3) -- (2,3); 
\draw (-1,2) -- (2,2) -- (2,7); 
\draw (-1,7) -- (-1,1) -- (1,1) -- (1,7);

\end{tikzpicture}

&
\succ
&
\begin{tikzpicture}[scale=0.3]
\draw[-,line width=2pt] (0,7) to (0,1);
\draw (-5,7) -- (-5,6) -- (3,6) -- (3,7) -- (-5,7);
\draw (-4,7) -- (-4,5) -- (2,5); 
\draw (-3,7) -- (-3,4) -- (2,4); 
\draw (-2,7) -- (-2,4);  
\draw (-1,3) -- (2,3);
\draw (-1,2) -- (2,2) -- (2,7); 
\draw (-1,7) -- (-1,1) -- (1,1) -- (1,7); 

\end{tikzpicture}
&
\succ
&
\begin{tikzpicture}[scale=0.3]
\draw[-,line width=2pt] (0,7) to (0,1);
\draw (-5,7) -- (-5,6) -- (3,6) -- (3,7) -- (-5,7);
\draw (-4,7) -- (-4,5) -- (2,5); 
\draw (-3,7) -- (-3,4) -- (2,4); 
\draw (-2,7) -- (-2,4);  
\draw (-1,3) -- (2,3);
\draw (-1,2) -- (1,2);
\draw (2,3) -- (2,7); 
\draw (-1,7) -- (-1,1) -- (1,1) -- (1,7); 

\end{tikzpicture}

\end{array}
$$
\end{center}

\end{example}

\begin{example}
Let us vary the partitions in Example \ref{example:partition1} slightly to $(\mu,\nu)=((5,4,3,2,1);(3,2^4,1))$, but keeping $w$ the same. Then our picture would look like: 

\begin{center}
\begin{tikzpicture}[scale=0.3]
\draw[-,line width=2pt] (0,7) to (0,-7);
\draw (-5,7) -- (-5,6) -- (3,6) -- (3,7) -- (-5,7);
\draw (-4,7) -- (-4,5) -- (2,5); 
\draw (-3,7) -- (-3,4) -- (2,4); 
\draw (-2,7) -- (-2,3) -- (2,3); 
\draw (-1,2) -- (2,2) -- (2,7); 
\draw (-1,7) -- (-1,2);
\draw (0,1) -- (1,1) -- (1,7);

\draw (-5,-7) -- (-5,-6) -- (3,-6) -- (3,-7) -- (-5,-7);
\draw (-4,-7) -- (-4,-5) -- (2,-5); 
\draw (-3,-7) -- (-3,-4) -- (2,-4); 
\draw (-2,-7) -- (-2,-3) -- (2,-3); 
\draw (-1,-2) -- (2,-2) -- (2,-7); 
\draw (-1,-7) -- (-1,-2);
\draw (0,-1) -- (1,-1) -- (1,-7); 

\end{tikzpicture}
\end{center}
Again we would have $m=4$ and $\eType(v+W,x_{|W^{\perp}/W})=((5,4,3,1^2);(3,2^4,1))$, obtained from $(\mu,\nu)$ by decreasing $\mu_4$ by $1$. Exactly the same calculations performed in Example  \ref{example:partition1} again show that $k_2=l_2=4$ but this time we have that $\max \Gamma_5= \max \Delta_{4}=5$. So $\eType(v+Y,x{|Y^{\perp}/Y})$ would be obtained from $\eType(v+W,x{|W^{\perp}/W})$ by decreasing $\mu_5=\mu_{\max \Gamma_{5}}$ by $1$ and the succession of exotic types are as follows: 

$$
\begin{array}{ccccc}
 \eType(v,x)& & \eType(v+W,x_{|W^{\perp}/W}) && \eType(v+Y,x_{|Y^{\perp}/Y})   \\
\hline\\

\begin{tikzpicture}[scale=0.3]
\draw[-,line width=2pt] (0,7) to (0,1);
\draw (-5,7) -- (-5,6) -- (3,6) -- (3,7) -- (-5,7);
\draw (-4,7) -- (-4,5) -- (2,5); 
\draw (-3,7) -- (-3,4) -- (2,4); 
\draw (-2,7) -- (-2,3) -- (2,3); 
\draw (-1,2) -- (2,2) -- (2,7); 
\draw (-1,7) -- (-1,2);
\draw (0,1) -- (1,1) -- (1,7); 

\end{tikzpicture}

&
\succ
&

\begin{tikzpicture}[scale=0.3]
\draw[-,line width=2pt] (0,7) to (0,1);
\draw (-5,7) -- (-5,6) -- (3,6) -- (3,7) -- (-5,7);
\draw (-4,7) -- (-4,5) -- (2,5); 
\draw (-3,7) -- (-3,4) -- (2,4); 
\draw (-2,7) -- (-2,4); 
\draw (-1,3) -- (2,3);
\draw (-1,2) -- (2,2) -- (2,7); 
\draw (-1,7) -- (-1,2);
\draw (0,1) -- (1,1) -- (1,7); 

\end{tikzpicture}

&
\succ
&

\begin{tikzpicture}[scale=0.3]
\draw[-,line width=2pt] (0,7) to (0,1);
\draw (-5,7) -- (-5,6) -- (3,6) -- (3,7) -- (-5,7);
\draw (-4,7) -- (-4,5) -- (2,5); 
\draw (-3,7) -- (-3,4) -- (2,4); 
\draw (-2,7) -- (-2,4); 
\draw (-1,3) -- (2,3);
\draw (0,2) -- (2,2) -- (2,7); 
\draw (-1,7) -- (-1,3);
\draw (0,1) -- (1,1) -- (1,7); 

\end{tikzpicture}

\end{array}
$$

\end{example}

\begin{example} \label{example:5.2.2}
Let $\mu=(3,2^3)$ and $\nu=(3,2^2,1)$. 

\begin{center}
\begin{tikzpicture}[scale=0.3]
\draw[-,line width=2pt] (0,5) to (0,-5);
\draw (-3,5) -- (-3,4) -- (3,4) -- (3,5) -- (-3,5);
\draw (-2,5) -- (-2,1) -- (1,1) -- (1,5);
\draw (-1,5) -- (-1,1);
\draw (-2,3) -- (2,3);
\draw (-2,2) --  (2,2) -- (2,5);

\draw (-3,-5) -- (-3,-4) -- (3,-4) -- (3,-5) -- (-3,-5);
\draw (-2,-5) -- (-2,-1) -- (1,-1) -- (1,-5);
\draw (-1,-5) -- (-1,-1);
\draw (-2,-3) -- (2,-3);
\draw (-2,-2) --  (2,-2) -- (2,-5);
\end{tikzpicture}
\end{center}
Let 
$$
w=\alpha_{1}v_{1,1}+\alpha_{2}v_{2,1}+\alpha_{3}v_{3,1}+v_{4,1}+\beta_{3}v_{3,4}^{*}+\beta_{2}v_{2,4}^{*}+\beta_{1}v_{1,6}^{*},
$$ 
so that $\eType(v+W, x_{|W^{\perp}/W})=((3,2^2,1);(3,2^2,1))$ obtained from $(\mu,\nu)$ by decreasing $\mu_4$ by $1$. Observe that in this case we have $m=4$ and $2=\nu_{3}>\nu_4=1$.\vspace{5pt}

By Proposition \ref{proposition:D1k2} $k_2$ such that $X \subseteq x^{k_2-1}(W^{\perp}) + \C[x]v + W$ is $4=\lambda_3$ and the maximal $l_2$ such that $X^{\perp}\supseteq (x^{l_2-1})^{-1}(\C[x]v+X+W) \cap W^{\perp}$ is $3(=\mu_4+\nu_3-1)$. We give a few more details as an aide to the proof of Proposition \ref{proposition:D1l2}. We have $X^{\perp} \supseteq x^{-2}(W)$ since $X$ and $W$ are not supported on $v_{4,1}^{*}$, this implies that $X^{\perp} \supseteq  x^{-2}(\C[x]v+X+W)\cap W^{\perp}.$ Also, $X^{\perp} \subseteq \ker(x^3) \cap W^{\perp}$ and so $l_2 \geq 3.$ \vspace{5pt}

The vectors $u, u'$ and $u''$ in Proposition  \ref{proposition:D1k2} are 
\begin{eqnarray*}
u	&=& v_{1,2}+(1-\alpha_2)v_{2,1}+(1-\alpha_3)v_{3,1}-(\alpha_{1}v_{1,1}+\beta_{3}v_{3,4}^{*}+\beta_{2}v_{2,4}^{*}+\beta_{1}v_{1,6}^{*}) = x(v)-w \\
u'	&=& v_{1,5}+(1-\alpha_2)v_{2,4}+(1-\alpha_3)v_{3,4}-(\alpha_{1}v_{1,4}+\beta_{3}v_{3,1}^{*}+\beta_{2}v_{2,1}^{*}+\beta_{1}v_{1,3}^{*} )\\
u''	&=& u'+(\beta_2+\beta_3)v_{3,1}^{*}.
\end{eqnarray*}
Note that $x^{3}(u'')=u$, $\langle w,u' \rangle=\beta_2+\beta_3$ and so $\langle w,u'' \rangle=0$. Therefore $u''  \in x^{-3}(\C[x]v+X+W)\cap W^{\perp}$, but $\langle \ttx,u''\rangle=(1-\alpha_2)\delta_2+(1-\alpha_3)\delta_3-(-\beta_2\gamma_2-\beta_3\gamma_3+\beta_2+\beta_3),$ which is generically non-zero. Therefore $X^{\perp} \not\supseteq x^{-3}(\C[x]v+X+W)\cap W^{\perp}$ and so $l_2=3$. \vspace{5pt}

Therefore, $\eType(v+Y,x_{|Y^{\perp}/Y})=((3,2,2,1);(3,2,1,1))$. Pictorially we have 
\begin{center}
$$
\begin{array}{ccccc}
 \eType(v,x)& & \eType(v+W,x_{|W^{\perp}/W}) && \eType(v+Y,x_{|Y^{\perp}/Y})   \\
\hline\\

\begin{tikzpicture}[scale=0.3]
\draw[-,line width=2pt] (0,5) to (0,1);
\draw (-3,5) -- (-3,4) -- (3,4) -- (3,5) -- (-3,5);
\draw (-2,5) -- (-2,1) -- (1,1) -- (1,5);
\draw (-1,5) -- (-1,1);
\draw (-2,3) -- (2,3);
\draw (-2,2) --  (2,2) -- (2,5);

\end{tikzpicture}

&
\succ
&
\begin{tikzpicture}[scale=0.3]
\draw[-,line width=2pt] (0,5) to (0,1);
\draw (-3,5) -- (-3,4) -- (3,4) -- (3,5) -- (-3,5);
\draw (-2,5) -- (-2,2);
\draw (-1,1)--(1,1) -- (1,5);
\draw (-1,5) -- (-1,1);
\draw (-2,3) -- (2,3);
\draw (-2,2) --  (2,2) -- (2,5);
\end{tikzpicture}
&
\succ
&
\begin{tikzpicture}[scale=0.3]
\draw[-,line width=2pt] (0,5) to (0,1);
\draw (-3,5) -- (-3,4) -- (3,4) -- (3,5) -- (-3,5);
\draw (-2,5) -- (-2,2);
\draw (-1,1)--(1,1) -- (1,5);
\draw (-1,5) -- (-1,1);
\draw (-2,3) -- (2,3) -- (2,5);
\draw (-2,2) --  (1,2);
\end{tikzpicture}

\end{array}
$$
\end{center}

\end{example}

\begin{example} \label{5.2.1} Let $\mu=(4,3,3,2,2)$ and $\nu=(4,3,3,1)$, so $V$ can be pictured as follows: 

\begin{center}
\begin{tikzpicture}[scale=0.3]
\draw[-,line width=2pt] (0,6) to (0,-6);
\draw (-4,6) -- (-4,5) -- (4,5) -- (4,6) -- (-4,6);
\draw (-3,6) -- (-3,3) -- (3,3) -- (3,6);
\draw (-3,4) -- (3,4);
\draw (2,6) -- (2,3);
\draw (-2,6) -- (-2,1) -- (0,1); 
\draw (-2,2) -- (1,2) -- (1,6); 
\draw (-1,6) -- (-1,1) ; 

\draw (-4,-6) -- (-4,-5) -- (4,-5) -- (4,-6) -- (-4,-6);
\draw (-3,-6) -- (-3,-3) -- (3,-3) -- (3,-6);
\draw (-3,-4) -- (3,-4);
\draw (2,-6) -- (2,-3);
\draw (-2,-6) -- (-2,-1) -- (0,-1); 
\draw (-2,-2) -- (1,-2) -- (1,-6); 
\draw (-1,-6) -- (-1,-1) ;

\end{tikzpicture}
\end{center}
Here $v=v_{1,4}+v_{2,3}+v_{3,3}+v_{4,2}+v_{5,2}$. Let $$w=\alpha_{1}v_{1,1}+\alpha_{2}v_{2,1}+\alpha_{3}v_{3,1}+\beta_{3}v_{3,6}^{*}+\beta_{2}v_{2,6}^{*}+\beta_{1}v_{1,8}^{*},$$ with $\beta_2 \neq -\beta_3$ so that $\eType(v+W,x_{|W^{\perp}/W})=((4,3,3,2,2);(4,3,2,1))$. In this case we have $m=3$ and $\nu_2=\nu_3=2$. We will explicitly confirm the assertion in Proposition \ref{proposition:D2k2} that the maximal $k_2$ such that $X \subseteq x^{k-1}(W^{\perp}) + \C[x]v + W$ is $6=\lambda_3$. \vspace{5pt}

Firstly observe that $v_{1,1}$ and $v_{1,8}^{*}$ are clearly contained in $x^5(W^{\perp})$ as is  $v_{1,2}=x^5(v_{1,7}).$ We also have $\beta_{2}v_{3,6}-\beta_{3}v_{2,6} \in W^{\perp}$ and so $\beta_{2}v_{3,1}-\beta_{3}v_{2,1} \in x^{5}(W^{\perp})$. Hence $v_{2,1}+v_{3,1}=x^2(v)-v_{1,2} \in x^5(W^{\perp})+\C[x]v$ and since $\beta_2 \neq -\beta_3$, it follows that $\beta_{2}v_{3,1}-\beta_{3}v_{2,1}$ and $v_{2,1}+v_{3,1}$ are not scalar multiples of each other. Therefore we conclude that $v_{2,1}$ and $v_{3,1}$ are contained in $x^5(W^{\perp})+\C[x]v$.\vspace{5pt}

Next observe that $\alpha_{2}v_{3,1}^{*}-\alpha_{3}v_{2,1}^{*} \in W^{\perp}$ and so $\alpha_{2}v_{3,6}^{*}-\alpha_{3}v_{2,6}^{*} \in x^{5}(W^{\perp})$. Therefore we have 
$$
\C\{v_{1,1},v_{2,1}, v_{3,1},v_{1,8}^{*}, \alpha_{2}v_{3,6}^{*}-\alpha_{3}v_{2,6}^{*} \} \subseteq x^5(W^{\perp})+\C[x]v
$$ 
and so 
$$
\C\{v_{1,1},v_{2,1}, v_{3,1},v_{1,8}^{*}, \alpha_{2}v_{3,6}^{*}-\alpha_{3}v_{2,6}^{*} \} +W \subseteq x^5(W^{\perp})+\C[x]v +W. 
$$ 
Since $W$ is chosen generically, we may assume that $\alpha_{2}v_{3,6}^{*}-\alpha_{3}v_{2,6}^{*}$ and $\beta_{3}v_{3,6}^{*}+\beta_{2}v_{2,6}^{*}$ are not scalar multiples of each other and so we have that $v_{3,6}^{*}$ and $v_{2,6}^{*}$ are contained in $x^5(W^{\perp})+\C[x]v+W= x^5(W^{\perp})+\C[x]v$, since $W \subseteq x^5(W^{\perp})$. Therefore $X \subseteq x^5(W^{\perp})+\C[x]v+W$ which confirms Proposition \ref{proposition:D2k2} in that $k_2=6$.\vspace{5pt}

Next we easily see that $X^{\perp} \supseteq x^{-4}(\C[x]v+X+W)\cap W^{\perp}$, but $X^{\perp} \not\supseteq x^{-5}(W)$ and so $l_2=5$. (Note that $X^{\perp} \not\supseteq x^{-5}(\C[x]v)$ but  $X^{\perp} \supseteq x^{-5}(\C[x]v)\cap W^{\perp}$). Therefore we have $\eType(v+Y,x_{|Y^{\perp}/Y})=((4,3,2,2,2);(4,3,2,1))$. Pictorially, we have 

\begin{center}
$$
\begin{array}{ccccc}
 \eType(v,x)& & \eType(v+W,x_{|W^{\perp}/W}) && \eType(v+Y,x_{|Y^{\perp}/Y})   \\
\hline\\

\begin{tikzpicture}[scale=0.3]
\draw[-,line width=2pt] (0,6) to (0,1);
\draw (-4,6) -- (-4,5) -- (4,5) -- (4,6) -- (-4,6);
\draw (-3,6) -- (-3,3) -- (3,3) -- (3,6);
\draw (-3,4) -- (3,4);
\draw (2,6) -- (2,3);
\draw (-2,6) -- (-2,1) -- (0,1); 
\draw (-2,2) -- (1,2) -- (1,6); 
\draw (-1,6) -- (-1,1) ; 

\end{tikzpicture}

&
\succ
&
\begin{tikzpicture}[scale=0.3]
\draw[-,line width=2pt] (0,6) to (0,1);
\draw (-4,6) -- (-4,5) -- (4,5) -- (4,6) -- (-4,6);
\draw (-3,6) -- (-3,3) -- (2,3);
\draw (3,6) -- (3,4);
\draw (-3,4) -- (3,4);
\draw (2,6) -- (2,3);
\draw (-2,6) -- (-2,1) -- (0,1); 
\draw (-2,2) -- (1,2) -- (1,6); 
\draw (-1,6) -- (-1,1) ; 

\end{tikzpicture}
&
\succ
&
\begin{tikzpicture}[scale=0.3]
\draw[-,line width=2pt] (0,6) to (0,1);
\draw (-4,6) -- (-4,5) -- (4,5) -- (4,6) -- (-4,6);
\draw (-3,6) -- (-3,4);
\draw (3,6) -- (3,4);
\draw (-3,4) -- (3,4);
\draw (2,6) -- (2,3) -- (-2,3);
\draw (-2,6) -- (-2,1) -- (0,1); 
\draw (-2,2) -- (1,2) -- (1,6); 
\draw (-1,6) -- (-1,1) ; 
\end{tikzpicture}

\end{array}
$$
\end{center}

\end{example}

\begin{example} \label{5.2.3}
Consider the bipartition $(\mu,\nu)=((2^2);(3,2,1))$. 

\begin{center}
\begin{tikzpicture}[scale=0.3]
\draw[-,line width=2pt] (0,4) to (0,-4);
\draw (-2,4) -- (-2,2) -- (2,2) -- (2,4) -- (-2,4);
\draw (-1,4) -- (-1,2);
\draw (-2,3) -- (3,3) -- (3,4) -- (2,4);
\draw (1,4) -- (1,1) -- (0,1);

\draw (-2,-4) -- (-2,-2) -- (2,-2) -- (2,-4) -- (-2,-4);
\draw (-1,-4) -- (-1,-2);
\draw (-2,-3) -- (3,-3) -- (3,-4) -- (2,-4);
\draw (1,-4) -- (1,-1) -- (0,-1);

\end{tikzpicture}
\end{center}

In this case we have $v=v_{1,2}+v_{2,2}$. Let $w=\alpha_{1}v_{1,1} + \beta_{1}v_{1,5}^{*}$ so that $m=1$ and we have  $\eType(v+W,x_{|W^{\perp}/W})=((2^2);(2^2,1))$ - obtained from $(\mu,\nu)$ by decreasing $\nu_1$ by $1$. Also since $2=\max \Gamma_1 > \max \Delta_1=1$, Proposition \ref{proposition:k2d0} tells us $k_2=4(=\lambda_1-1)$. We confirm this explicitly.  It is clear that both $v_{1,1}$ and $v_{1,5}^{*}$ are both contained in $x^{3}(W^{\perp})$ but not $x^{4}(W^{\perp})=W$. So if $X$ were contained in $x^{4}(W^{\perp})+\C[x]v+W$, it would be necessary that $X \subseteq \C[x]v+W= \C\{v_{1,2}+v_{2,2}, v_{1,1}+v_{2,1}, \alpha_{1}v_{1,1} + \beta_{1}v_{1,5}^{*}\}$, which is clearly impossible. Therefore, $k_2=4$ as predicted. \vspace{5pt}

By Proposition \ref{proposition:genericl2}, we have $l_2=4$ as well and so $\eType(w+Y,x_{|Y^{\perp}/Y})=  ((2,1);(2^2,1)$ obtained from $((2^2);(2^2,1))$ by decreasing $\mu_2=\mu_{\max \Gamma_1}$ by $1$. Thus:

$$
\begin{array}{ccccc}
 \eType(v,x)& & \eType(v+W,x_{|W^{\perp}/W}) && \eType(v+Y,x_{|Y^{\perp}/Y})   \\
\hline\\

\begin{tikzpicture}[scale=0.3]
\draw[-,line width=2pt] (0,4) to (0,1);
\draw (-2,4) -- (-2,2) -- (2,2) -- (2,4) -- (-2,4);
\draw (-1,4) -- (-1,2);
\draw (-2,3) -- (3,3) -- (3,4) -- (2,4);
\draw (1,4) -- (1,1) -- (0,1);

\end{tikzpicture}

&
\succ
&

\begin{tikzpicture}[scale=0.3]
\draw[-,line width=2pt] (0,4) to (0,1);
\draw (-2,4) -- (-2,2) -- (2,2) -- (2,4) -- (-2,4);
\draw (-1,4) -- (-1,2);
\draw (-2,3) -- (2,3) -- (2,4);
\draw (1,4) -- (1,1) -- (0,1);
\end{tikzpicture}
&
\succ
&

\begin{tikzpicture}[scale=0.3]
\draw[-,line width=2pt] (0,4) to (0,1);
\draw (-2,3) -- (2,3) -- (2,4) -- (-2,4);
\draw (-2,4) -- (-2,3);

\draw (-1,4) -- (-1,2);
\draw (-1,2) -- (2,2) -- (2,3);
\draw (1,4) -- (1,1) -- (0,1);

\end{tikzpicture}

\end{array}
$$
\end{example}

\begin{example}
If we were to change the above bipartition slightly to $(\mu,\nu)=((2^3);(3,2,1))$: 
\begin{center}
\begin{tikzpicture}[scale=0.3]
\draw[-,line width=2pt] (0,4) to (0,-4);
\draw (-2,4) -- (-2,1) -- (0,1);
\draw (2,4) -- (-2,4);
\draw (-1,4) -- (-1,1);
\draw (-2,3) -- (3,3) -- (3,4) -- (2,4);
\draw (-2,2) -- (2,2) -- (2,4);
\draw (1,4) -- (1,1) -- (0,1);

\draw (-2,-4) -- (-2,-1) -- (0,-1);
\draw (2,-4) -- (-2,-4);
\draw (-1,-4) -- (-1,-1);
\draw (-2,-3) -- (3,-3) -- (3,-4) -- (2,-4);
\draw (-2,-2) -- (2,-2) -- (2,-4);
\draw (1,-4) -- (1,-1) -- (0,-1);

\end{tikzpicture}
\end{center}
Then we would still have $m=1$ and exactly the same calculations would show that $k_2=l_2=4$. But in this case we would have $3 = \max \Gamma_1 > \max \Delta_{2}=2$, that is $ \max \Gamma_m > \max \Delta_{m+1}=2$ and so we would obtain $\eType(w+Y,x_{|Y^{\perp}/Y})$ by decreasing $\nu_{2}=\nu_{\max \Delta_2}$ by $1$, and the succession of exotic types would be as follows:

$$
\begin{array}{ccccc}
 \eType(v,x)& & \eType(v+W,x_{|W^{\perp}/W}) && \eType(v+Y,x_{|Y^{\perp}/Y})   \\
\hline\\

\begin{tikzpicture}[scale=0.3]
\draw[-,line width=2pt] (0,4) to (0,1);
\draw (-2,4) -- (-2,1) -- (0,1);
\draw (2,4) -- (-2,4);
\draw (-1,4) -- (-1,1);
\draw (-2,3) -- (3,3) -- (3,4) -- (2,4);
\draw (-2,2) -- (2,2) -- (2,4);
\draw (1,4) -- (1,1) -- (0,1);

\end{tikzpicture}

&
\succ
&

\begin{tikzpicture}[scale=0.3]
\draw[-,line width=2pt] (0,4) to (0,1);
\draw (-2,4) -- (-2,1) -- (0,1);
\draw (2,4) -- (-2,4);
\draw (-1,4) -- (-1,1);
\draw (-2,3) -- (2,3);  
\draw (-2,2) -- (2,2) -- (2,4);
\draw (1,4) -- (1,1) -- (0,1);

\end{tikzpicture}

&
\succ
&

\begin{tikzpicture}[scale=0.3]
\draw[-,line width=2pt] (0,4) to (0,1);
\draw (-2,4) -- (-2,1) -- (0,1);
\draw (2,4) -- (-2,4);
\draw (-1,4) -- (-1,1);
\draw (-2,3) -- (2,3);  
\draw (-2,2) -- (1,2);
\draw (2,3) -- (2,4);
\draw (1,4) -- (1,1) -- (0,1);

\end{tikzpicture}

\end{array}
$$

\end{example}

\end{appendix}


\begin{thebibliography}{AHS11}

\bibitem[AH08]{AH}
Pramod~N. Achar and Anthony Henderson.
\newblock Orbit closures in the enhanced nilpotent cone.
\newblock {\em Adv. Math.}, 219(1):27--62, 2008.

\bibitem[AHS11]{special}
Pramod~N. Achar, Anthony Henderson, and Eric Sommers.
\newblock Pieces of nilpotent cones for classical groups.
\newblock {\em Represent. Theory}, 15:584--616, 2011.

\bibitem[HT12]{HT}
Anthony Henderson and Peter~E. Trapa.
\newblock The exotic {R}obinson-{S}chensted correspondence.
\newblock {\em J. Algebra}, 370:32--45, 2012.

\bibitem[Kat09]{Kat}
Syu Kato.
\newblock An exotic {D}eligne-{L}anglands correspondence for symplectic groups.
\newblock {\em Duke Math. J.}, 148(2):305--371, 2009.

\bibitem[Kat11]{Kato5}
Syu Kato.
\newblock Deformations of nilpotent cones and {S}pringer correspondences.
\newblock {\em Amer. J. Math.}, 133(2):519--553, 2011.

\bibitem[KL79]{KL}
David Kazhdan and George Lusztig.
\newblock Representations of {C}oxeter groups and {H}ecke algebras.
\newblock {\em Invent. Math.}, 53(2):165--184, 1979.

\bibitem[Lus85]{L4}
George Lusztig.
\newblock Character sheaves. {II}, {III}.
\newblock {\em Adv. in Math.}, 57(3):226--265, 266--315, 1985.

\bibitem[Nan13]{vn}
Vinoth Nandakumar.
\newblock Equivariant coherent sheaves on the exotic nilpotent cone.
\newblock {\em Represent. Theory}, 17:663--681, 2013.

\bibitem[NRS16]{NRS16}
V.~Nadakumar, D.~Rosso, and N.~Saunders.
\newblock Irreducible components of exotic springer fibres.
\newblock {\em Preprint}, 2016.

\bibitem[Spa76]{Spa}
N.~Spaltenstein.
\newblock The fixed point set of a unipotent transformation on the flag
  manifold.
\newblock {\em Nederl. Akad. Wetensch. Proc. Ser. A {\bf 79}=Indag. Math.},
  38(5):452--456, 1976.

\bibitem[Spa82]{irredspa}
Nicolas Spaltenstein.
\newblock {\em Classes unipotentes et sous-groupes de {B}orel}, volume 946 of
  {\em Lecture Notes in Mathematics}.
\newblock Springer-Verlag, Berlin-New York, 1982.

\bibitem[SS14]{ss}
T.~Shoji and K.~Sorlin.
\newblock Exotic symmetric space over a finite field, {II}.
\newblock {\em Transform. Groups}, 19(3):887--926, 2014.

\bibitem[Ste76]{SteinRSK}
Robert Steinberg.
\newblock On the desingularization of the unipotent variety.
\newblock {\em Invent. Math.}, 36:209--224, 1976.

\bibitem[Ste88]{Ste}
Robert Steinberg.
\newblock An occurrence of the {R}obinson-{S}chensted correspondence.
\newblock {\em J. Algebra}, 113(2):523--528, 1988.

\bibitem[Tra09]{T}
Roman Travkin.
\newblock Mirabolic {R}obinson-{S}chensted-{K}nuth correspondence.
\newblock {\em Selecta Math. (N.S.)}, 14(3-4):727--758, 2009.

\bibitem[vL]{leeuwen}
Marc A.~A. van Leeuwen.
\newblock A {R}obinson-��{S}chensted algorithm in the geometry of flags for
  classical groups.
\newblock Thesis, Rijksuniversiteit Utrecht, the Netherlands, 1989.

\bibitem[vL00]{leeuwen2}
Marc A.~A. van Leeuwen.
\newblock Flag varieties and interpretations of {Y}oung tableau algorithms.
\newblock {\em J. Algebra}, 224(2):397--426, 2000.

\end{thebibliography}

\def\cprime{$'$} \newcommand{\arxiv}[1]{\href{http://arxiv.org/abs/#1}{\tt
  arXiv:\nolinkurl{#1}}}

\end{document}